\theoremstyle{definition}
\newtheorem{definition}{Definition}[section]
\newtheorem{example}[definition]{Example}
\newtheorem{remark}[definition]{Remark}
\theoremstyle{plain}
\newtheorem{theorem}[definition]{Theorem}
\newtheorem*{theorem*}{Theorem}
\newtheorem{proposition}[definition]{Proposition}
\newtheorem{lemma}[definition]{Lemma}
\newtheorem{corollary}[definition]{Corollary}
\numberwithin{equation}{section}
\def \alt96 {`}
\def \RN {\mathbb{R}^N}
\def \R {\mathbb{R}}
\def \loc {\mathrm{loc}}
\def \N {\mathbb{N}}
\def \e {\varepsilon }
\def \LL {{\mathcal{L}}}
\def \II {\mathcal{I}}
\def \d {\mathrm{d}}
\def \de {\partial}
\def \longto {\longrightarrow}
\def \OO {\mathcal{O}}
\def \CCs {\mathcal{C}_s}
\def \spX {\mathbb{X}(\Omega)}
\begin{document}
 \title[Mixed operators: regularity and maximum principles]
 {Mixed local and nonlocal elliptic operators: \\ regularity and maximum principles}
 \author[S.\,Biagi]{Stefano Biagi}
 \author[S.\,Dipierro]{Serena Dipierro}
 \author[E.\,Valdinoci]{Enrico Valdinoci}
 \author[E.\,Vecchi]{Eugenio Vecchi}
 
 \address[S.\,Biagi]{Dipartimento di Matematica
 \newline\indent Politecnico di Milano \newline\indent
 Via Bonardi 9, 20133 Milano, Italy}
 \email{stefano.biagi@polimi.it}
 
 \address[S.\,Dipierro]{Department of Mathematics and Statistics
 \newline\indent University of Western Australia \newline\indent
 35 Stirling Highway, WA 6009 Crawley, Australia}
 \email{serena.dipierro@uwa.edu.au}
 
 \address[E.\,Valdinoci]{Department of Mathematics and Statistics
 \newline\indent University of Western Australia \newline\indent
 35 Stirling Highway, WA 6009 Crawley, Australia}
 \email{enrico.valdinoci@uwa.edu.au}
 
 \address[E.\,Vecchi]{Dipartimento di Matematica
 \newline\indent Università di Bologna \newline\indent
 Piazza di Porta San Donato 5, 40126 Bologna, Italy}
 \email{eugenio.vecchi@polimi.it}

\subjclass[2010]{35A01, 35B65, 35R11}

\keywords{Operators of mixed order, existence, regularity,
maximum principle, qualitative properties of solutions}

\thanks{The authors are members of INdAM. S. Biagi
is partially supported by the INdAM-GNAMPA project 
\emph{Metodi topologici per problemi al contorno associati a certe 
classi di equazioni alle derivate parziali}.
S. Dipierro and E. Valdinoci are members of AustMS.
S. Dipierro is supported by
the Australian Research Council DECRA DE180100957
``PDEs, free boundaries and applications''. 
E. Valdinoci is supported by the Australian Laureate Fellowship
FL190100081 ``Minimal surfaces, free boundaries and partial differential equations''. 
E. Vecchi is partially supported
by the INdAM-GNAMPA project 
``Convergenze variazionali per funzionali
e operatori dipendenti da campi vettoriali''. We thank the Referees for their very valuable comments.}

\date{\today}
 \begin{abstract}
We develop a systematic study of the superpositions
of elliptic operators with different orders, mixing classical and fractional
scenarios. For concreteness, we focus on the sum of the Laplacian and the fractional Laplacian,
and we provide structural results, including existence,
maximum principles (both for weak and classical solutions),
interior Sobolev regularity
and boun\-da\-ry regularity of Lipschitz type.
 \end{abstract}
 
\maketitle
 
\section{Introduction} \label{sec.intro}

The goal of this paper is to develop a systematic study
of mixed operators. The word ``mixed'' refers here to
the differential (or pseudo-differential) order of the operator,
and to the type of the operator, which combines classical
and fractional features.

Though many of the techniques that we present here
are rather ``general'', for the sake of concreteness,
and not to hide the main flow of ideas by technical complications,
we focus here on an operator which deals with the coexistence
of a Laplacian and a fractional Laplacian, given by
\begin{equation}\label{LOPERATO1}
\LL : = -\Delta + (-\Delta)^s,\qquad{\mbox{ for some }}s\in(0,1).
 \end{equation}
Here, $(-\Delta)^s$ is the nonlocal operator defined as
 \begin{equation} \label{eq:defdeltas}
  (-\Delta)^s u(x) = c_{N,s}\; \textrm{P.V.}\int_{\RN}\frac{u(x)-u(y)}{|x-y|^{N+2s}}\,\d y,
  \end{equation}
 where $c_{N,s}$ is a suitable normalizing constant, whose explicit expression is given by
   $$c_{N,s} = 
   \bigg(\int_{\RN}\frac{1-\cos(\zeta_1)}{|\zeta|^{N+2s}}\,\d\zeta\bigg)^{-1},$$
   and, as customary, ``$\textrm{P.V.}$'' stands for the Cauchy's principal value,
   see e.g. formulas~(3.1) and~(3.2) in~\cite{guida}.
   
We recall that the literature presents
 several
 variants of the fractional Laplacian, including one that is defined in terms of the eigenbasis and spectrum of the
Dirichlet Laplacian on a bounded domain and one
in which the singular integral only extends to the given domain.
These are quite different operators from the one in~\eqref{eq:defdeltas}, see e.g.~\cite[Sections~2.1, 2.2. 2.3, 4.2 and~4.3]{MR3967804} for a list of similarities and differences between these fractional operators.
   
Operators as in~\eqref{eq:defdeltas} arise naturally
from the superposition of two stochastic processes
with different scales (namely, a classical random
walk and a L\'evy flight): roughly speaking, when a particle can
follow either of these two processes according to a certain probability,
the associated limit diffusion equation is described by
an operator of the form described in~\eqref{LOPERATO1}:
see in particular the appendix in~\cite{PRO}
for a
thorough discussion of this phenomenon and~\cite{PRECISO}
for the description of a biological population in an ecological niche
modeled by a mixed operator.

In view of this motivation, we think that operators as
in~\eqref{LOPERATO1} will enjoy a constantly rising popularity
in applied sciences, also to study the different impact
of ``local'' and ``nonlocal'' diffusions in concrete situations
(e.g. how different types of ``regional'' or ``global''
restrictions may reduce the spreading of a pandemic disease, see e.g.~\cite{TRAV}).
Other classical applications include
heat transport in magnetized
plasmas (see~\cite{PLA}).
\medskip

The mathematical study of operators with different order
is not new in itself, and indeed the literature
already presents results concerning, among the others,
the theory of viscosity solutions
(see~\cite{JAK1, JAK2, MR2422079, OHKSBDc3847t8567, CIOM, BBGG-1, BBGG-2}),
the Aubry-Mather theory for sums of different
fractional Laplacians
(see Remark~5.6 in~\cite{LL}), 
regularization effects of Cahn-Hilliard equations
(see~\cite{MAG}), numerics (\cite{NUMERI}),
probability
and stochastics (see~\cite{CHEN1, CHEN2, PROS}),
symmetry results
for mixed range phase transitions (see~\cite{CASE}),
porous medium equations
(see~\cite{TESO}),
decay estimates for parabolic equations (see~\cite{VES}),
specific Liouville theorems for systems
of equations driven by sums of fractional Laplacians (see~\cite{JJ, ALIBAUD}),
fractional damping effects (see~\cite{PATA}),
and Bernstein-type regularity results (see~\cite{BERN}).

Though conceptually different,
the problems related to mixed order equations are closely
related in spirit to the ones of variable order equations
(see~\cite{HO, BK1, BK, LIKE}), which have themselves concrete
significance in applied sciences (see e.g.~\cite{WW, CUORE}).
\medskip

The main focus here
is on the operator in~\eqref{LOPERATO1} with the aim
of obtaining a number of {\em structural results} (based on techniques
which we plan to extend to more general situations in future works).
These results deal with
distributional as well as classical solutions, and they
can be grouped into four categories: existence,
maximum principles,
interior regularity,
and boundary regularity.\medskip

Let us now describe the main results in further detail.
First of all, we will introduce a suitable notion of weak solution
associated to the operator in~\eqref{LOPERATO1}.
In light of the mixed nature of the problem, this step already presents
some caveats, due to the possible choices of including or excluding
the external data within the classical Sobolev spaces framework.
Our setting for weak solutions will be described in
Definition~\ref{def.weaksol}, and then exploited
to obtain,
via the Lax-Milgram's theorem,
the following existence result, of very
classical flavor. In all the forthcoming statements,
 we tacitly understand that $\Omega\subseteq\RN$ is a \emph{bounded open set with
 $C^1$ boundary}.
 
\begin{theorem} \label{thm.existenceLax}
	Given $f\in L^2(\Omega)$, there exists a unique
     \emph{weak solution}
     $u_f\in H^1(\RN)$ of $$
     \begin{cases}
    \LL u = f & \text{in $\Omega$}, \\
    u\big|_{\RN\setminus \Omega} = 0.
   \end{cases}$$
     Furthermore, there exists a $\mathbf{c} > 0$ such that
     $$\|u_f\|_{H^1(\RN)}\leq\mathbf{c}\,\|f\|_{L^2(\Omega)}.$$
\end{theorem}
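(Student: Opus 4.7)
The plan is to apply the Lax-Milgram theorem on the Hilbert space
$$ \spX := \bigl\{ u \in H^1(\RN) \,:\, u = 0 \text{ a.e.\ in } \RN \setminus \Omega \bigr\}, $$
which is a closed subspace of $H^1(\RN)$. Since $\Omega$ is bounded, the Poincaré inequality applies to every $u \in \spX$, so that $u \mapsto \|\nabla u\|_{L^2(\RN)}$ is a norm on $\spX$ equivalent to the full $\|\cdot\|_{H^1(\RN)}$. The weak formulation of Definition~\ref{def.weaksol} should correspond to the bilinear form
$$ B(u,v) := \int_{\RN} \nabla u \cdot \nabla v \,\d x + \frac{c_{N,s}}{2} \iint_{\RN \times \RN} \frac{(u(x)-u(y))(v(x)-v(y))}{|x-y|^{N+2s}} \,\d x \,\d y, $$
so that $u_f \in \spX$ is a weak solution precisely when $B(u_f, v) = \int_\Omega f v \, \d x$ for every $v \in \spX$.

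The three hypotheses of Lax-Milgram then have to be verified. For the continuity of $B$, the local part is handled by the Cauchy-Schwarz inequality, while the nonlocal part reduces to the continuous embedding $H^1(\RN) \hookrightarrow H^s(\RN)$ for $s \in (0,1)$, which itself follows from the Fourier-side bound $(1+|\xi|^2)^s \le 1+|\xi|^2$. Coercivity is immediate because the nonlocal contribution to $B(u,u)$ is manifestly nonnegative, so
$$ B(u,u) \,\ge\, \|\nabla u\|_{L^2(\RN)}^2 \,\ge\, \c^{-1}\|u\|_{H^1(\RN)}^2 \quad \text{on } \spX, $$
by Poincaré. Finally, the linear functional $v \mapsto \int_\Omega f v \,\d x$ is continuous on $\spX$ by Cauchy-Schwarz.

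Lax-Milgram then produces a unique $u_f \in \spX \subseteq H^1(\RN)$ solving the problem in the weak sense; uniqueness extends to all of $H^1(\RN)$ subject to the exterior condition because any competitor vanishing outside $\Omega$ automatically lies in $\spX$. The quantitative estimate comes by testing the weak formulation with $v = u_f$: coercivity combined with Cauchy-Schwarz gives
$$ \c^{-1}\|u_f\|_{H^1(\RN)}^2 \,\le\, B(u_f, u_f) \,=\, \int_\Omega f\,u_f \,\d x \,\le\, \|f\|_{L^2(\Omega)}\,\|u_f\|_{H^1(\RN)}, $$
and dividing yields the bound. I do not foresee any substantial obstacle in this scheme; the only delicate technical point is verifying that the nonlocal form is well-defined and continuous on $\spX$, which is precisely where the embedding $H^1 \hookrightarrow H^s$ enters.
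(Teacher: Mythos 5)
Your proposal is correct and follows essentially the same route as the paper: Lax--Milgram on $\spX$ with the same bilinear form, continuity via the embedding $H^1(\RN)\hookrightarrow H^s(\RN)$, coercivity from the nonnegativity of the nonlocal term plus Poincar\'e, and the estimate obtained equivalently by testing with $u_f$ (the paper reads it off from the operator norm of the functional, which is the same computation).
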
 
We then focus on the maximum principles
associated to the operator in~\eqref{LOPERATO1}.
Their formulation is slightly different for weak
and classical solutions. More precisely, we
present a weak maximum principle for weak solutions, as follows:

  \begin{theorem}\label{thm.WMPweak}
Let $u\in H^1(\RN)$ weakly satisfy~$\LL u\ge0$ in~$\Omega$.
If $u\geq 0$ a.e.\,in $\RN\setminus\Omega$, then
$u\geq 0$ a.e.\,on $\Omega$.  \end{theorem}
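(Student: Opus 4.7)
The plan is to test the weak formulation of~$\LL u\ge 0$ against the negative part~$u^-:=\max(-u,0)$ of~$u$ itself, and conclude that~$u^-\equiv 0$ a.e. on~$\R^N$. Since~$u\in H^1(\R^N)$ with~$u\ge 0$ a.e. outside~$\Omega$, the truncation~$u^-$ lies in~$H^1(\R^N)$ and vanishes a.e. on~$\R^N\setminus\Omega$, so it should be an admissible test function in the weak formulation set up in Definition~\ref{def.weaksol}.

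Carrying this out, the bilinear form associated with~$\LL$ (evaluated at~$(u,u^-)$) splits into a local piece and a nonlocal piece. The local piece yields~$\int_\Omega\nabla u\cdot\nabla u^-\,\d x=-\int_\Omega|\nabla u^-|^2\,\d x$, using the standard identity~$\nabla u^+\cdot\nabla u^-=0$ a.e. For the nonlocal piece, the key ingredient is the pointwise algebraic inequality
\[
(u(x)-u(y))(u^-(x)-u^-(y))\le -(u^-(x)-u^-(y))^2 \qquad\text{for a.e. }x,y\in\R^N,
\]
which follows at once from writing~$u=u^+-u^-$ and noting that~$(u^+(x)-u^+(y))(u^-(x)-u^-(y))\le 0$, since~$u^+$ and~$u^-$ have essentially disjoint supports (a case-by-case inspection on the signs of~$u(x),u(y)$ settles this). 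Plugging everything into the weak inequality~$\mathcal{B}(u,u^-)\ge 0$ forces
\[
\int_\Omega|\nabla u^-|^2\,\d x+\frac{c_{N,s}}{2}\iint_{\R^{2N}}\frac{(u^-(x)-u^-(y))^2}{|x-y|^{N+2s}}\,\d x\,\d y\le 0.
\]

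Both terms on the left are non-negative, so they must vanish. Using the nonlocal term alone, the Gagliardo seminorm of~$u^-$ is zero, hence~$u^-$ is a.e. equal to a constant on~$\R^N$; combined with~$u^-=0$ a.e. on~$\R^N\setminus\Omega$, this constant is~$0$, and we obtain~$u\ge 0$ a.e. on~$\Omega$, as desired.

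The routine part is the computation of the two terms once~$u^-$ is plugged in; the main (small) technical obstacle is the careful justification that~$u^-$ is indeed an admissible test function for the weak formulation prescribed in Definition~\ref{def.weaksol}, together with the verification of the sign-algebra inequality above. Once these are in place, the proof is a short, completely structural argument that does not rely on any regularity of~$u$ beyond~$H^1(\R^N)$.
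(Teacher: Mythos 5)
Your proposal is correct and follows essentially the same route as the paper: both test the weak inequality against $u^-=\max\{-u,0\}$ (which lies in $\spX$ since $u\ge 0$ a.e.\ outside $\Omega$), split the local term via $\nabla u\cdot\nabla u^-=-|\nabla u^-|^2$, and control the nonlocal term through the decomposition $u=u^+-u^-$ together with the sign inequality $(u^+(x)-u^+(y))(u^-(x)-u^-(y))\le 0$. The only difference is presentational: the paper argues by contradiction, extracting a strict inequality from a hypothetical set of positive measure where $u<0$, whereas you conclude directly from the vanishing of the full $H^1$ and Gagliardo energies of $u^-$; both are valid.
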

    
For classical solutions, this statement is strengthened in the following result:
    
  \begin{theorem}\label{NUOVO}
  Let $u\in C(\RN,\R)\cap C^2(\Omega,\R)$, with
  $$\int_{\RN}\frac{|u(x)|}{1+|x|^{N+2s}}\,\d x
    <\infty
  .$$ Suppose that
  $$\begin{cases}
   \LL u\geq 0 & \text{pointwise in $\Omega$}, \\
   u \geq 0 & \text{in $\RN\setminus\Omega$}.
  \end{cases}$$
  Then
  \begin{equation}\label{thm.WMPLL}
  u\geq 0\quad{\mbox{ in }}\quad\Omega.\end{equation}

Furthermore,
  \begin{equation}\label{thm.SMPLL}
  {\mbox{if there exists $x_0\in\Omega$ such that
  $u(x_0) = 0$, then 
$u\equiv 0$ throughout $\RN$.}}\end{equation}
 \end{theorem}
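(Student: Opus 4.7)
The plan is to reduce both assertions to a pointwise computation at a (would-be) global minimum of $u$, exploiting the fact that $u\geq 0$ outside the bounded set~$\Omega$ forces any negative (resp.\ vanishing) value of~$u$ to be attained at an interior minimum. At such a point, the classical second-derivative test will control the sign of~$-\Delta u$, while the definition~\eqref{eq:defdeltas} will control the sign of $(-\Delta)^s u$; adding the two will produce a strict sign contradiction with $\LL u\geq 0$.

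For the weak form~\eqref{thm.WMPLL} I would argue by contradiction, assuming that $u<0$ somewhere in~$\Omega$. Since $u\in C(\RN)$ and $\overline\Omega$ is compact, while $u\geq 0$ on $\RN\setminus\Omega\supseteq\partial\Omega$, the infimum $\min_{\overline\Omega}u=\inf_{\RN}u$ is negative and attained at some $x_0\in\Omega$. Being an interior minimum of the $C^2$ function~$u$, the point~$x_0$ satisfies $-\Delta u(x_0)\leq 0$. For the nonlocal contribution, $u(x_0)-u(y)\leq 0$ for every $y\in\RN$ (so no principal value is actually needed), with strict inequality throughout the open set $\RN\setminus\Omega$; this forces $(-\Delta)^s u(x_0)<0$ strictly. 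Summing the two gives $\LL u(x_0)<0$, contradicting the hypothesis.

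For the strong form~\eqref{thm.SMPLL}, the weak form just proved yields $u\geq 0$ on~$\RN$. If $u(x_0)=0$ for some $x_0\in\Omega$, then $x_0$ is still a global minimum and the same reasoning produces $-\Delta u(x_0)\leq 0$ together with
\[
(-\Delta)^s u(x_0)=-c_{N,s}\int_{\RN}\frac{u(y)}{|x_0-y|^{N+2s}}\,\d y\leq 0,
\]
and the latter inequality is strict as soon as $u\not\equiv 0$, by continuity and non-negativity of~$u$. So a non-trivial~$u$ would force $\LL u(x_0)<0$, a contradiction; hence $u\equiv 0$ throughout~$\RN$.

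The only genuinely technical point, and the main potential obstacle, is justifying that at the minimum~$x_0$ the integrand in~\eqref{eq:defdeltas} is absolutely Lebesgue-integrable, so that the pointwise sign of the integrand determines the sign of the integral. Near~$x_0$ this follows from $u\in C^2(\Omega)$ together with $\nabla u(x_0)=0$, which give $u(y)-u(x_0)=O(|y-x_0|^2)$ and hence an integrable singularity against $|y-x_0|^{-N-2s}$ for $s\in(0,1)$; at infinity it is ensured by the standing assumption $\int_{\RN}|u(x)|/(1+|x|^{N+2s})\,\d x<\infty$. Once this integrability check is in place, both assertions reduce to the elementary sign computations outlined above, independently of the weak-solution machinery used in Theorem~\ref{thm.WMPweak}.
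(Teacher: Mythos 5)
Your proposal is correct and follows essentially the same route as the paper's proof: argue by contradiction at an interior global minimum $x_0\in\Omega$, use the second-derivative test to get $\Delta u(x_0)\ge 0$ and the hypothesis $\LL u(x_0)\ge0$ to force $(-\Delta)^s u(x_0)\ge0$, then observe that the integrand $u(x_0)-u(y)$ is nonpositive (and strictly negative on $\RN\setminus\Omega$ in the weak case, resp.\ wherever $u>0$ in the strong case) to reach a sign contradiction. The only cosmetic difference is that the paper first deduces $u\equiv u(x_0)$ from the vanishing of an integral with signed integrand, while you conclude directly via strict negativity on a set of positive measure; both are valid, and your integrability remark is already covered by the standing assumption $u\in\mathcal{C}_s(\RN)\cap C^2(\Omega,\R)$.
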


As customary, the statement in~\eqref{thm.WMPLL} can be considered as a weak maximum principle
and the one in~\eqref{thm.SMPLL} as a strong maximum principle.
The difference between the weak maximum principle in Theorem~\ref{thm.WMPweak}
and that in~\eqref{thm.WMPLL} is in the assumptions required on~$u$
(the Sobolev setting being considered in Theorem~\ref{thm.WMPweak} and the classical one in Theorem~\ref{NUOVO}).

Though these maximum principles are of classical
flavor, we remark that their validity depends sensibly on the
type of the operator and on the setting of the data.
In particular, we will show in
Appendix~\ref{sec.appendix} that the maximum principle does not hold
if the external condition ``$u\geq 0$ 
in $\RN\setminus\Omega$''
is replaced by ``$u\ge0$
  on $\de\Omega$'': namely, classical boundary conditions
  are not
   enough to guarantee the validity of maximum principles for the operator
  in~\eqref{LOPERATO1}, notwithstanding the presence
  of the classical Laplacian in this operator.
  
Furthermore, we will show that these
maximum principles do not hold when one of the elliptic operators
in~\eqref{LOPERATO1} has the ``wrong sign'', e.g. for operators
of the type~$\Delta + (-\Delta)^s$.

The fact that the weak maximum principles
in Theorem \ref{thm.WMPweak} and in~\eqref{thm.WMPLL} of Theorem~\ref{NUOVO}
hold
for $\LL$ but not for similar operators
with the ``wrong sign'' is very reasonable, especially
in view of some potential-theoretic results of
the early '60s (see, e.g., \cite{BCP, Courrege}).
In fact, in the aforementioned papers is proved that: if 
$$A:C_0^2(\RN)\to C(\RN)$$
is a linear operator, 
then the next two conditions are \emph{equivalent}:
\begin{itemize}
 \item[(i)] $A$ is continuous (with respect to suitable topologies) and
 $$
 \begin{cases}
 u\in C_0^2(\RN), \\
 \text{$u(x) = \inf_{\RN}u \leq 0$}
 \end{cases}\,\,\Longrightarrow\,\, Au(x)\leq 0;
 $$
 \item[(ii)] for every $u\in C^2_0(\RN)$, we have
 \begin{equation} \label{eq.formAPS}
 \begin{split}
  Au & = -\sum_{i,j = 1}^Na_{i,j}(x)\frac{\de^2 u}{\de x_i\de x_j}(x)
  + \sum_{i = 1}^Nb_i(x)\frac{\de u}{\de x_i}(x)+c(x)u(x) \\[0.1cm]
  & \qquad + \int_{\RN}s(x,\d y)\big[u(x)-u(y)-\mathbf{1}_{\{|x-y|\leq 1\}}\cdot
  \langle \nabla u(x),x-y\rangle \big],
  \end{split}
  \end{equation}  
  where $A(x) = \begin{pmatrix}
  a_{i,j}(x)
  \end{pmatrix}$
  is \emph{positive definite} for every $x\in\RN$, 
  $c\leq 0$ on the whole of $\RN$ and 
  $s$ is a measurable kernel on $\RN$ satisfying the properties
  \begin{itemize}
   \item[$\bullet$] $s(x,\{x\}) = 0$;
   \item[$\bullet$] for every positive function $f\in C_0(\RN)$, the map
   $$x\mapsto \int_{\RN}s(x,\d y)|y-x|^2f(y)\,\d y\quad\text{is measurable}.$$
  \end{itemize}
 \end{itemize}
 In particular, property (i) (which shall be used in the proof of~\eqref{thm.WMPLL})
 holds for 
 $\LL$ 
 but not for
 similar operators with ``wrong sign''. It is worth mentioning that
 in the papers \cite{BCP, Courrege} it is not studied the validity
 of a weak maximum principle as in~\eqref{thm.SMPLL} (i.e.,
 the possibility of ``propagating'' the sign of $u$ from $\RN\setminus\Omega$
 into $\Omega$). On the other hand, the equivalence of (i) and (ii) is exploited
 in the recent paper \cite{ALIBAUD} to characterize all the operators
 of the form \eqref{eq.formAPS} for which a Liouville-type theorem holds.
\medskip

We devote part of this paper to the interior and boundary
regularity properties of solutions.
Though a variety of different directions
can be taken for this, we focus here on the interior regularity
theory in Sobolev spaces. The main result in this sense
goes as follows:

\begin{theorem} \label{thm.higherregul}
      Let $m\in\mathbb{N}\cup\{0\}$ and
      $f\in H^m(\Omega)$. Assume that~$u\in H^1(\RN)$ is a weak solution
      of $$      \LL u = f\qquad\text{in $\Omega$}.$$
      Then
      $u\in H^{m+2}_{\loc}(\Omega)$.
    \end{theorem}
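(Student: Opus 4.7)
The plan is to rewrite the equation as a classical Poisson problem,
\begin{equation*}
-\Delta u \;=\; f - (-\Delta)^s u\qquad\text{in }\Omega,
\end{equation*}
and treat $(-\Delta)^s u$ as a lower-order perturbation. Each application of standard interior regularity for the Laplacian gains two derivatives of $u$, while the fractional term only consumes $2s<2$ of them; the resulting net gain of $2-2s>0$ per iteration drives a bootstrap that reaches the target index $m+2$ after finitely many steps.

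The crucial auxiliary ingredient is a local mapping property for the fractional Laplacian: if $v\in L^2(\RN)$ and $v\in H^{\tau}_{\loc}(\Omega)$ for some $\tau\in\R$, then $(-\Delta)^s v\in H^{\tau-2s}_{\loc}(\Omega)$. This would be proved by a cutoff decomposition. For nested subdomains $\omega\Subset\omega'\Subset\Omega$, pick $\eta\in C_c^\infty(\Omega)$ equal to $1$ on a neighborhood of $\overline{\omega'}$, and write $v=\eta v+(1-\eta)v$. The first summand, compactly supported in $H^{\tau}(\RN)$, satisfies $(-\Delta)^s(\eta v)\in H^{\tau-2s}(\RN)$ by Fourier analysis. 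For the second summand, since $1-\eta$ vanishes near $\overline{\omega'}$, its fractional Laplacian is given on $\omega$ by
\begin{equation*}
(-\Delta)^s\bigl((1-\eta)v\bigr)(x) \;=\; -c_{N,s}\int_{\RN}\frac{(1-\eta(y))\,v(y)}{|x-y|^{N+2s}}\,\d y,
\end{equation*}
an integral against a kernel that is smooth in $x\in\omega$; the absolute convergence in $y$ is guaranteed by $v\in L^2(\RN)$ and the decay of $|y|^{-N-2s}$, so this term is $C^\infty(\omega)$.

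With the lemma in hand, I iterate. Set $\sigma_0:=1$ and, given $u\in H^{\sigma_k}_{\loc}(\Omega)\cap H^1(\RN)$, apply the lemma to obtain $(-\Delta)^s u\in H^{\sigma_k-2s}_{\loc}(\Omega)$. Hence $-\Delta u\in H^{\min\{m,\,\sigma_k-2s\}}_{\loc}(\Omega)$, and classical interior regularity for the Laplacian promotes $u$ to $H^{\sigma_{k+1}}_{\loc}(\Omega)$, with $\sigma_{k+1}:=2+\min\{m,\,\sigma_k-2s\}$. Since $\sigma_{k+1}\ge\sigma_k+(2-2s)$ whenever $\sigma_k<m+2$, the sequence saturates at the target value $m+2$ after finitely many steps, yielding $u\in H^{m+2}_{\loc}(\Omega)$.

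The principal obstacle is precisely the auxiliary mapping lemma: because $(-\Delta)^s$ is nonlocal, local Sobolev regularity of $v$ does not immediately imply local Sobolev regularity of $(-\Delta)^s v$. The cutoff decomposition is the technical heart of the argument, separating the singular near-diagonal contribution (handled by Fourier analysis on the compactly supported piece) from the far-field tail (handled by the smoothness of $|x-y|^{-N-2s}$ off the diagonal together with the $L^2$-integrability of $v$). A minor bookkeeping point is to shrink the nested localization subdomains slightly at each stage so the cutoffs remain compactly supported in $\Omega$; the remainder of the bootstrap is a routine appeal to classical elliptic regularity.
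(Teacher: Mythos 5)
Your route is genuinely different from the paper's. The paper never inverts the Laplacian against the nonlocal term: it proves the base case $H^1\to H^2_{\loc}$ by testing the full bilinear form with $D_k^{-h}(\zeta^2 D_k^h u)$ and estimating the fractional energy contribution directly (Theorem~\ref{thm.mainregul}), and then bootstraps by applying difference quotients to the equation itself (Lemma~\ref{lem.rapprinc}) combined with a truncation argument (Lemmas~\ref{lem.generalII} and~\ref{lem.multiplication}) whose role is exactly your far-field observation: $u(1-\zeta)$ vanishes near the subdomain, so its contribution to $(-\Delta)^s$ is smooth there. Everything in the paper stays inside integer-index $L^2$ energy estimates. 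You instead work on the full Sobolev scale and treat $(-\Delta)^s$ as an operator of order $2s<2$. The introduction of the paper explicitly discusses this perturbative strategy and observes that in its naive form (demanding $(-\Delta)^s u\in L^2$) it only works for $s<1/2$; your bootstrap with net gain $2-2s$ per step is the refinement that makes it work for all $s$, at the price of negative and non-integer Sobolev indices.

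That price is where your write-up is thin, and it is the load-bearing part. For $s>1/2$ the first iteration places $-\Delta u$ in $H^{1-2s}_{\loc}(\Omega)$ with $1-2s<0$, and all subsequent indices $\sigma_k-2s$ are non-integers; the step ``classical interior regularity for the Laplacian promotes $u$ to $H^{\sigma_{k+1}}_{\loc}$'' is then not the textbook difference-quotient theorem but interior elliptic regularity on the full scale $H^\tau_{\loc}$, $\tau\in\R$ (a parametrix or Fourier-multiplier fact), which you assert rather than prove. Likewise the mapping lemma requires justifying the identity $(-\Delta)^s v=(-\Delta)^s(\eta v)+(-\Delta)^s\bigl((1-\eta)v\bigr)$ in the distributional sense and a Fubini-type argument showing the far-field piece is represented on $\omega$ by the convergent integral -- precisely what the paper's Lemmas~\ref{lem.generalII} and~\ref{lem.multiplication} supply. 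With these two ingredients filled in, your bootstrap arithmetic is sound and the argument closes; so I would call this a correct alternative proof modulo standard-but-unproved facts rather than a gap. What the paper's approach buys is self-containedness within the integer scale $H^m$; what yours buys is brevity and a cleaner conceptual picture of $(-\Delta)^s$ as a subordinate operator.
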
   
The proof of Theorem~\ref{thm.higherregul} requires some technical
improvements with respect to the
classical energy methods
and bootstrap arguments,
since the fractional operator prevents
the possibility of taking derivatives of the equation.
To overcome this difficulty, we will
exploit truncation arguments and difference quotients
in a suitable way.\medskip

As for the boundary regularity, for concreteness
we focus on the linear growth
and Lipschitz regularity for solutions in convex domain.
Our result can be summarized as follows:
\begin{theorem}\label{BDTH}
Assume that $\Omega$ is strictly convex
and let $\bar C>0$. Let 
    $u\in H^1(\RN)$ be such that
    \begin{equation}\label{eq.PbGeneralWeak}
     \begin{cases}
	  \LL u\le \bar C & \text{in $\Omega$},\\
	   u=0 &  \text{in $\RN\setminus\Omega$},\\
	   u\le\bar C & \text{in $\RN$}.
     \end{cases}
    \end{equation}
    Then, there exist $C$, $\ell>0$ such that,
    for every $p\in\partial\Omega$, we have that
    \begin{equation} \label{Y:1}
	u(x)\le C\,\bar C\, |x-p| \qquad\text{for a.e.\,$x\in B(p,\ell)$.}
	\end{equation}
   \end{theorem}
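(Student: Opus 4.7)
My plan is a barrier argument based on the weak maximum principle (Theorem~\ref{thm.WMPweak}). For each boundary point $p\in\partial\Omega$ I will construct a supersolution $W_p\in H^1(\RN)$ with $W_p(p)=0$, linear growth at $p$, $\LL W_p\ge\bar C$ in a neighborhood of $p$ intersected with $\Omega$, and $W_p\ge u$ outside that neighborhood; the maximum principle applied to $W_p-u$ will then give $u\le W_p$, and the linear growth of $W_p$ transfers to $u$.

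The barrier exploits the convex geometry at $p$. Let $\nu=\nu(p)$ be the outer unit normal: convexity yields $\Omega\subset\{(x-p)\cdot\nu\le 0\}$, and strict convexity, combined with the compactness of $\partial\Omega$, guarantees uniform control of all constants as $p$ varies. I fix a small $R>0$ (depending only on $N,s,\Omega$) and set $y_p:=p+R\nu$, so that $B(y_p,R)\subset\RN\setminus\Omega$ and $|x-y_p|^2\ge|x-p|^2+R^2$ for every $x\in\Omega$. With $\alpha$ sufficiently large (e.g.\ $\alpha:=N$), I propose
\[
W_p(x):=\lambda\,\eta(x)\,\bigl[R^{-\alpha}-|x-y_p|^{-\alpha}\bigr]_+,
\]
where $\eta\in C^\infty_c(\RN)$ is a cutoff equal to $1$ on a large ball containing $\Omega$ (ensuring $W_p\in H^1(\RN)$), the positive-part truncation makes $W_p\ge 0$ on $\RN\setminus\Omega$, and $\lambda>0$ is a constant of order $\bar C$. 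An elementary computation gives
\[
-\Delta\bigl[R^{-\alpha}-|x-y_p|^{-\alpha}\bigr]=\alpha(\alpha+2-N)|x-y_p|^{-\alpha-2}\ge c\,R^{-\alpha-2}\quad\text{on }\Omega\cap B(p,R),
\]
whereas a scaling argument yields the nonlocal bound $|(-\Delta)^s W_p|\le C\lambda\,R^{-\alpha-2s}$ (after an innocuous smoothing of the positive-part truncation that leaves $W_p$ unchanged on $\Omega$). Because $s<1$, choosing $R$ small enough makes the local part dominate, so that $\LL W_p\ge \tfrac{c\lambda}{2}R^{-\alpha-2}$ on $\Omega\cap B(p,R)$; moreover, for $x\in\Omega\setminus B(p,R)$ we have $|x-y_p|\ge R\sqrt 2$ and hence $W_p(x)\ge\lambda(1-2^{-\alpha/2})R^{-\alpha}$. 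Selecting $\lambda$ to be $\bar C$ divided by the smaller of these two lower bounds (a fixed constant times $\bar C$), I obtain $\LL W_p\ge\bar C\ge\LL u$ on $\Omega\cap B(p,R)$, $W_p\ge\bar C\ge u$ on $\Omega\setminus B(p,R)$ (by the hypothesis $u\le\bar C$), and $W_p\ge 0=u$ on $\RN\setminus\Omega$. A localised version of Theorem~\ref{thm.WMPweak} applied to $W_p-u$ on the open set $\Omega\cap B(p,R)$ then yields $u\le W_p$ a.e.\ there, and a Taylor expansion of $R^{-\alpha}-|x-y_p|^{-\alpha}$ at $p$ (whose gradient is $-\alpha R^{-\alpha-1}\nu$) transforms this into $u(x)\le W_p(x)\le C\,\bar C\,|x-p|$ for a.e.\ $x\in B(p,\ell)$, which is \eqref{Y:1}.

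The key difficulty is the nonlocal part of $\LL$: the naive linear barrier $-(x-p)\cdot\nu$ is annihilated by $(-\Delta)^s$ and produces $\LL w\equiv 0$, so it cannot serve as a supersolution; one must use a curved profile whose local Hessian contributes a coercive term that dominates the nonlocal one. The radial bubble $R^{-\alpha}-|x-y_p|^{-\alpha}$ is tailored so that $-\Delta$ is of order $R^{-\alpha-2}$ while $(-\Delta)^s$ is only of order $R^{-\alpha-2s}$, and the strict inequality $s<1$ produces a genuine scaling gap that, for $R$ sufficiently small, lets the local term win. A subsidiary technical point is the need to keep $W_p$ in $H^1(\RN)$ (so as to apply Theorem~\ref{thm.WMPweak}) and non-negative on $\RN\setminus\Omega$ (so as to dominate $u$ there); this is handled via the cutoff $\eta$ and the positive-part truncation, which only modify $W_p$ where the underlying profile is either non-positive or far from $\Omega$, and a short sign check on the corresponding Gagliardo-type bilinear form shows that neither modification can destroy the supersolution property on $\Omega\cap B(p,R)$.
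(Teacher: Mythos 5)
Your overall strategy (a barrier vanishing at $p$ with linear growth, plus the weak maximum principle of Theorem~\ref{thm.WMPweak}) is the same as the paper's, but the specific barrier you propose fails for $s\ge 1/2$, and the failure is exactly at the heart of the difficulty the theorem is addressing. The function $[R^{-\alpha}-|x-y_p|^{-\alpha}]_+$ has a Lipschitz kink along the sphere $\partial B(y_p,R)$: its gradient jumps from $-\alpha R^{-\alpha-1}\nu$ on the outside of the ball to $0$ on the inside. That sphere is tangent to $\partial\Omega$ at $p$, so the kink sits at distance comparable to $|x-p|$ from the points $x\in\Omega$ where you need $\LL W_p\ge\bar C$. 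Writing $[\Phi]_+=\Phi+\Phi_-$ with $\Phi_-$ supported in $\overline{B(y_p,R)}$ and growing linearly (slope $\sim R^{-\alpha-1}$) away from the sphere, one computes for $x\in\Omega$ near $p$ that
\begin{equation*}
(-\Delta)^s\Phi_-(x)=-c_{N,s}\int_{B(y_p,R)}\frac{\Phi_-(y)}{|x-y|^{N+2s}}\,\d y\le -c\,R^{-\alpha-1}\,\mathrm{dist}\big(x,\partial B(y_p,R)\big)^{1-2s},
\end{equation*}
which tends to $-\infty$ as $x\to p$ when $s>1/2$ (and diverges logarithmically when $s=1/2$), with the \emph{unfavorable} sign. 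The local term $-\Delta W_p\sim\lambda R^{-\alpha-2}$ is bounded and cannot dominate it, so $\LL W_p\to-\infty$ near $p$ and the supersolution property breaks precisely where it is needed. Your ``scaling gap'' heuristic $R^{-\alpha-2}\gg R^{-\alpha-2s}$ presupposes that the rescaled profile $g_1(z)=[1-|z|^{-\alpha}]_+$ has bounded $s$-Laplacian on the annulus $\{1\le|z|\le 2\}$, which is false: $(-\Delta)^s t_+\sim -t^{1-2s}$ is unbounded near $t=0$ for $s>1/2$. The proposed ``innocuous smoothing'' cannot repair this, since any modification that keeps the barrier nonnegative on $\RN\setminus\Omega$, vanishing at $p$, and linearly bounded from inside retains a gradient discontinuity (or a concentrated second derivative) at $p$ and hence the same divergent nonlocal contribution.

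This is exactly the point the paper flags in the introduction (``for $s$ below $1/2$ one would not need the iteration exploited to reabsorb the nonlocal contributions''): your argument is essentially correct in the regime $s<1/2$, where $d^{1-2s}\to 0$ and the truncation really is a lower-order perturbation, but for $s\ge 1/2$ one must build the barrier so that the divergent part of the fractional Laplacian is cancelled rather than merely dominated. The paper does this in Lemma~\ref{l02} with a one-dimensional profile $\sum_j c_j x_+^{\alpha_j}$, $\alpha_j=1+2j(1-s)$, where each corrector's classical Laplacian ($\sim x^{\alpha_j-2}=x^{\alpha_{j-1}-2s}$) is chosen to absorb the singular term $\kappa_{j-1}x^{\alpha_{j-1}-2s}$ produced by $(-\Delta)^s x_+^{\alpha_{j-1}}$, iterating until the exponent exceeds $2s$ and Lemma~\ref{Pa0s} applies; a truncation lemma (Lemma~\ref{lem.cutOffpergamma}) then restores membership in $H^1(\RN)$. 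To complete your proof you would need an analogous corrector scheme for the radial bubble (or simply to restrict to $s<1/2$). A secondary, fixable point: the passage from the pointwise inequality $\LL W_p\ge\bar C$ to the weak inequality needed in Theorem~\ref{thm.WMPweak} on $\Omega\cap B(p,R)$ requires an integration-by-parts justification (cf.\ Remark~\ref{rem.classical}), which you should not leave implicit.
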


As customary, the notation~$B(p,\ell)$
here denotes the Euclidean ball centered at~$p$ with radius~$\ell$.

Interestingly, the boundary regularity in Theorem~\ref{BDTH}
is stronger than in the case of the fractional Laplacian, in which the
solution is in general not better than~$C^s$, see~\cite{RosSerra}.
\medskip

   As a byproduct of Theorem~\ref{BDTH}, one obtains
   also existence and regularity results, as given
  by the following result: 
    \begin{theorem}\label{thm.RegulBoundaryII}
     Assume that $\Omega$ is strictly convex. 
     Let~$m \geq \frac{N}2+3$ and
     $f\in C^m(\Omega,\R)\cap L^\infty(\Omega)$.
     Then, there exists a unique classical solution
     $\mathfrak{u}_f\in C(\RN,\R)\cap C^2(\Omega,\R)$ 
     of
     \begin{equation} \label{eq.BVPIntro}
      \begin{cases}
    \LL \mathfrak{u}_f = f & \text{in $\Omega$}, \\
    \mathfrak{u}_f\big|_{\RN\setminus \Omega} = 0,
   \end{cases}
	\end{equation}
   with
  $$\int_{\RN}\frac{|\mathfrak{u}_f(x)|}{1+|x|^{N+2s}}\,\d x <\infty .$$  
Moreover, this solution $\mathfrak{u}_f$ satisfies the following
additional properties: 
     \begin{itemize}
	 \item[(i)] $\mathfrak{u}_f\in H^1(\RN)$;
	 \item[(ii)] $\|\mathfrak{u}_f\|_{L^\infty(\RN)}\leq \mathbf{c}\,\|f\|_{L^\infty(\Omega)}$; 
	 \item[(iii)] for every $p\in\de\Omega$ there exists $\ell > 0$ such that
	 $$|\mathfrak{u}_f(x)|\leq \mathbf{c}\,\|f\|_{L^\infty(\Omega)}\cdot
	 |x-p|\qquad\text{for all $x\in\Omega\cap B(p,\ell)$}.$$    
     \end{itemize}   
     Here, $\mathbf{c} > 0$ is a constant independent of $\mathfrak{u}_f$.       
    \end{theorem}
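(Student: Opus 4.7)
Let $u_f\in H^1(\RN)$ be the weak solution produced by Theorem~\ref{thm.existenceLax} (applicable since $\Omega$ is bounded and $f\in L^\infty(\Omega)\subset L^2(\Omega)$). The plan is to show that $\mathfrak{u}_f:=u_f$ satisfies all the claimed properties. Item~(i) is built into this choice, and the integrability $\int_{\RN}|u_f|/(1+|x|^{N+2s})\,\d x<\infty$ is automatic, because $u_f\equiv 0$ a.e.\,outside the bounded set $\Omega$. For the interior $C^2$ regularity, I would apply Theorem~\ref{thm.higherregul} on subdomains $\widetilde\Omega\subset\subset\Omega$: on such subdomains all derivatives of $f$ up to order $m$ are bounded, so $f\in H^m(\widetilde\Omega)$, which delivers $u_f\in H^{m+2}_{\loc}(\Omega)$. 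The assumption $m\geq N/2+3$ gives $(m+2)-2>N/2$, so the Sobolev embedding forces $u_f\in C^2(\Omega,\R)$, and $\LL u_f=f$ holds pointwise in $\Omega$.

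The main technical step I foresee is the $L^\infty$ estimate~(ii), which I would obtain via a Stampacchia truncation argument in the mixed setting. For $K\geq 0$, test the weak formulation against $v_K:=(u_f-K)_+$, which lies in $H^1_0(\Omega)$ because $u_f=0$ outside $\Omega$. The local contribution equals $\int_{\RN}|\nabla v_K|^2\,\d x$, while the nonlocal quadratic form is nonnegative thanks to the pointwise inequality
$$(a-b)\bigl((a-K)_+-(b-K)_+\bigr)\geq \bigl((a-K)_+-(b-K)_+\bigr)^2\geq 0.$$
Hence $\int_{\RN}|\nabla v_K|^2\,\d x\leq\|f\|_{L^\infty(\Omega)}\int_{\Omega}v_K\,\d x$, and a classical De~Giorgi--Stampacchia iteration on the superlevel sets $\{u_f>K\}$ yields $u_f\leq\mathbf{c}\|f\|_{L^\infty(\Omega)}$ a.e.\,in $\RN$; the same argument applied to $-u_f$ produces the matching lower bound, proving~(ii). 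Setting $\bar C:=\mathbf{c}\|f\|_{L^\infty(\Omega)}$, both $u_f$ and $-u_f$ then satisfy~\eqref{eq.PbGeneralWeak} with this constant, so Theorem~\ref{BDTH} applied at each $p\in\de\Omega$ provides $\ell>0$ with $|u_f(x)|\leq C\bar C\,|x-p|$ for a.e.\,$x\in B(p,\ell)$. This is item~(iii), and combined with the interior $C^2$ regularity and the vanishing outside $\Omega$ it shows that $\mathfrak{u}_f\in C(\RN,\R)$.

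Uniqueness is the easy last step: if $\mathfrak{v}\in C(\RN,\R)\cap C^2(\Omega,\R)$ is another classical solution with the stated integrability, then $w:=\mathfrak{u}_f-\mathfrak{v}$ still satisfies that integrability condition, belongs to $C(\RN,\R)\cap C^2(\Omega,\R)$, and fulfils $\LL w\equiv 0$ pointwise in $\Omega$ together with $w\equiv 0$ in $\RN\setminus\Omega$; applying the classical weak maximum principle~\eqref{thm.WMPLL} of Theorem~\ref{NUOVO} to both $w$ and $-w$ forces $w\equiv 0$. The hard part will be the $L^\infty$ bound~(ii): the Stampacchia iteration must be executed carefully in the mixed setting, exploiting the fact that the nonlocal quadratic form is \emph{dissipated} by truncation rather than generating extra contributions that could spoil the standard local estimate.
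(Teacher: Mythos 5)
Your argument is correct and follows essentially the same route as the paper: existence via Theorem~\ref{thm.existenceLax}, interior $C^2$ regularity via Theorem~\ref{thm.higherregul} plus Sobolev embedding (this is exactly Corollary~\ref{cor.regulSmoot}), the boundary estimate via Theorem~\ref{BDTH}, and uniqueness via the weak maximum principle~\eqref{thm.WMPLL} (this is Corollary~\ref{cor.uniqueClassical}). The only deviation is that you re-derive the $L^\infty$ bound by a Stampacchia truncation, whereas the paper simply invokes its Theorem~\ref{BOUNDED}, whose proof is precisely the iteration you sketch (including your key inequality, which is the paper's~\eqref{EAFS2}).
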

    
Let us comment a bit about the ``philosophy'' of the regularity theory developed
 in this paper.  At first, in view of the classical regularity theory for the Laplacian,
 one could believe that adding an
extra fractional Laplacian to an already elliptic problem would just produce ``expected''
outcomes, as a lower order effect. For instance, one could argue that
interior regularity results (such as Theorem~\ref{thm.higherregul}, as well as the forthcoming Theorem~\ref{thm.mainregul}) could be a consequence of the classical theory.
Specifically: on the one hand, it is known that, if~$g\in L^2(\Omega)$ and~$u\in H^1(\Omega)$ are such that
\begin{equation}\label{021ur90327ytr3gtgPJS93uf9043yhg98u4} \begin{cases}
-\Delta u = g & {\mbox{ in }}\Omega,\\
u = 0 &{\mbox{ on }}\partial\Omega,\end{cases}\end{equation}
then $u\in H^2_{\loc}(\Omega)$ and the corresponding norm is controlled, up to constants, by~$\|u\|_{L^2(\Omega)}+
\|g\|_{L^2(\Omega)}$.

On the other hand, if we have a weak solution~$u$ to the
mixed problem, one can set~$ g := f -(-\Delta)^su$ and we reduce the situation
to the classical problem as
formulated in~\eqref{021ur90327ytr3gtgPJS93uf9043yhg98u4}.

These observations yield the regularity in class~$H^2_{\loc}(\Omega)$ with norm controlled
by~$\|u\|_{L^2(\Omega)}+
\|(-\Delta)^s u\|_{L^2(\Omega)}+\|f\|_{L^2(\Omega)}$,
leading to even better statements than the ones provided here
(e.g. in the forthcoming estimate~\eqref{eq.estimregulH2}), {\em but only when~$s$ is below~$1/2$}
(and above this threshold the norm in~$H^s(\Omega)$ would be too strong compared
with the initial regularity of the solution in~$H^1(\Omega)$).

That is: for the low-range of~$s$, the nonlocal part of the operator can be treated as a perturbation of the classical elliptic problem but for the high-range of fractional exponents these tricks seem to be unavailable
and the operator must be treated on its own terms (roughly speaking,
one can efficiently consider the fractional Laplacian as a lower order perturbation only when
the fractional exponent is ``sufficiently small'').

This is perhaps not a merely technical aspect of the problem: for instance, a distinction of this
sort will pop up also in the construction of the barrier constructed to prove Theorem~\ref{BDTH},
since for~$s$ below~$1/2$ one would not need the iteration exploited to reabsorb the nonlocal contributions
    and the proof would significantly simplify.

Similar occurrence of different phenomena according to the fractional threshold~$1/2$
appear in the literature in several descriptions of nonlocal problems, see e.g.~\cite{CAFSOU, SVMA, DISLO, SER}
and the references therein. 

For all these reasons, though in principle other approaches could be possible (relying e.g. on intermediate estimates to bootstrap, scaled norms and interpolation theory), we think it is useful and instructive
to develop a series of analytic tools which account for mixed operators in their whole complexity
rather than limiting our vision to perturbation methods from the classical cases. 
    \medskip
    
After this article was completed and posted online,
  we have re\-cei\-ved the very interesting pre\-print \cite{NIC-MAT},
  which considered a nonlinear problem of mixed type.
  The motivations, methodologies and results obtained are different
  from the ones in this paper, but Theorem~1.1 in~\cite{NIC-MAT}
  is related to Theorem~\ref{thm.RegulBoundaryII} here.
  Complementary to this result,
  we stress that Theorem 1.4 in \cite{NIC-MAT} shows that 
  there exists no classical solutions of \eqref{eq.BVPIntro} when $f$
  is merely in $L^\infty_{\loc}(\Omega)$.
  
  Finally, we mention that similar results could be obtained via the 
  probabilistic methods and the Green representation formulas dealt with 
  in \cite{CHEN1, CHEN2}. For regularity results involving mixed order diffusive operators
  in different directions see also~\cite{BBGG-1, FARINA1, FARINA2}.
\medskip

The proof of the boundary regularity result in Theorem~\ref{BDTH} relies
on the introduction of an explicit barrier. To the best of our
knowledge, this barrier
is completely new in the literature, and its construction
is based on an iterative method
of introducing ``correctors'' to recursively
compensate the terms
produced by the action of the nonlocal operator.
We think that this technique of iteratively canceling ``the nonlocal
tails'' is interesting in itself and can produce other results
in greater generality.\medskip

Some of the methodologies and motivations
presented in this paper will be also exploited in~\cite{NOI2}
to analyze the qualitative properties of solutions in specific
problems.
\medskip

The rest of this article is organized as follows.
In Section~\ref{sec.existence} we 
introduce the functional framework in which we work, discussing in particular
the notion
of weak solutions and giving the existence result in Theorem~\ref{thm.existenceLax}.
Section~\ref{sec.MPsLL} is devoted to the maximum principles, and to the proofs
of Theorems~\ref{thm.WMPweak} and~\ref{NUOVO}, and
Section~\ref{sec.regularity}
to the regularity theory, and to the proofs of Theorems~\ref{thm.higherregul}, \ref{BDTH}
and~\ref{thm.RegulBoundaryII}.

Then, in Appendix~\ref{sec.appendix}
we collect some counterexamples to the maximum principle.
 
  \section{Existence of weak solutions} \label{sec.existence}
   In this section we establish some basic facts on the existence
  of weak solutions for the Dirichlet problem associated with $\LL$, as defined in~\eqref{LOPERATO1},
  that is,
  \begin{equation} \label{eq.mainPB}
    \begin{cases}
    \LL u = f & \text{in $\Omega$}, \\
    u\big|_{\RN\setminus \Omega} = 0.
   \end{cases}
   \end{equation}
   Throughout the sequel, we tacitly understand that \emph{$\Omega\subseteq
   \RN$ is a bounded open set with $C^1$ boundary}. 
   Moreover, $s\in (0,1)$ is a fixed parameter,
   and~$(-\Delta)^s$ is as in \eqref{eq:defdeltas}.
   %
   In studying the sol\-va\-bi\-li\-ty of~\eqref{eq.mainPB},
   a `natural' space to consider is the following
   \begin{equation} \label{eq.defXOmega}
    \mathbb{X}(\Omega) := \big\{u\in H^1(\R^N):\,\text{$u\equiv 0$ in $\RN\setminus\Omega$}\big\}.
   \end{equation}
   We observe that, in view of the regularity assumption on $\Omega$,
   the space $\mathbb{X}(\Omega)$ is contained in~$ H^1(\RN)$ and
	is isomorphic
   to $H_0^1(\Omega)$ via the `zero-extension' map defined as
   $$\mathcal{E}_0:H_0^1(\RN)\to\mathbb{X}(\Omega), \qquad{\mbox{such that }}\quad
   \mathcal{E}_0(u) := u\cdot\chi_{\Omega}.$$
   As a consequence, $\mathbb{X}(\Omega)$ is endowed
   with a structure of (real) Hilbert space by
   the scalar product defined as follows
   $$\langle u, v\rangle_{\mathbb{X}} := \int_{\Omega}\langle \nabla u,\nabla v\rangle\,\d x
   \qquad \text{for all~$u$, $v\in\mathbb{X}(\Omega)$}.$$
   The norm associated with $\langle\cdot,\cdot\rangle_{\mathbb{X}}$ is
   $$\|u\|_{\mathbb{X}} := \|\nabla u\|_{L^2(\Omega)} \qquad
   \text{for all~$u\in\mathbb{X}(\Omega)$}, $$
   and $C_0^\infty(\Omega,\R)$ is \emph{dense} in $\mathbb{X}(\Omega)$.
   
   Furthermore,
   the classical Poincar\'{e} inequality holds in $\mathbb{X}$:
   more precisely, there exists a constant $\mathbf{c} > 0$ such that
   \begin{equation} \label{eq.PoincareX}
    \|u\|_{H^1(\RN)} \leq \mathbf{c}\,\|u\|_{\mathbb{X}}\qquad
   \text{for all $u\in \mathbb{X}(\Omega)$}.
   \end{equation}
   \medskip
   
   \noindent After all these preliminaries, 
   we can give the following definition.
   \begin{definition} \label{def.weaksol}
    Let $f\in L^2(\Omega)$.
    We say that a function $u\in H^1(\RN)$ is a \emph{weak solution
    of the equation}
    \begin{equation} \label{eq.mainPDE}
      \LL u = f\qquad\text{in $\Omega$}
      \end{equation}
      if, for every test function $\varphi\in C_0^\infty(\Omega,\R)$, one has
      \begin{equation} \label{eq.weaksoldef}
    \begin{split}
     \int_{\Omega}
     \langle\nabla u,\nabla \varphi\rangle\,\d x
      +\frac{c_{N,s}}{2}\int_{\RN\times\RN}
     \!\!\!\!\!\frac{(u(x)-u(y))(\varphi(x)-\varphi(y))}{|x-y|^{N+2s}}\,\d x\,\d y
   = \int_{\Omega}f\varphi\,\d x.
     \end{split}
    \end{equation}   
    Furthermore, one can more generally say that
	a function $u\in H^1(\RN)$ weakly satisfies~$\LL u\ge f$
	in~$\Omega$ if,
	for every nonnegative~$v\in \spX$, one has
     \begin{equation}\label{QUESTA}
    \begin{split}
     \int_{\Omega}
     \langle\nabla u,\nabla v\rangle\,\d x
     & +\frac{c_{N,s}}{2}\int_{\RN\times\RN}
     \!\!\!\!\!\frac{(u(x)-u(y))(v(x)-v(y))}{|x-y|^{N+2s}}\,\d x\,\d y
     \ge \int_{\Omega}fv\,\d x.
     \end{split}
    \end{equation}
	One can say that~$u$ weakly satisfies~$\LL u\le f$ in~$\Omega$
	if~$v:=-u$ weakly satisfies~$\LL v\ge f$
	in~$\Omega$.

    Finally, if $u$ is a weak solution of~\eqref{eq.mainPDE} and
    $u\in\mathbb{X}(\Omega)$, we say that 
    $u$ is a \emph{weak solution
    of problem}~\eqref{eq.mainPB}.
    \end{definition}

   \begin{remark} \label{rem.WeakSoluf}
     We notice that, if $u\in H^1(\RN)$ is any weak solution 
     of~\eqref{eq.mainPDE} (for some $f\in L^2(\Omega)$), 
     from the density of~$C_0^\infty(\Omega,\R)$ in $\spX$ we get
     \begin{equation} \label{eq.weaksoldefDensity}
    \begin{split}
     \int_{\Omega}
     \langle\nabla u,\nabla v\rangle\,\d x
     & +\frac{c_{N,s}}{2}\int_{\RN\times\RN}
     \frac{(u(x)-u(y))(v(x)-v(y))}{|x-y|^{N+2s}}\,\d x\,\d y
      \\
     & \quad= \int_{\Omega}fv\,\d x 
     \qquad\text{for all $v\in \spX$}.
     \end{split}
    \end{equation}
     In particular, if $u_f$ is a solution of~\eqref{eq.mainPB}
     (so that $u_f\in\spX$), we are entitled to use
      $u_f$ as a test function in~\eqref{eq.weaksoldefDensity},
     obtaining
     $$\int_{\Omega}|\nabla u_f|^2\,\d x
     + \frac{c_{N,s}}{2}\int_{\RN\times\RN}
     \frac{(u_f(x)-u_f(y))^2}{|x-y|^{N+2s}}\,\d x\,\d y
     = \int_{\Omega}fu_f\,\d x.$$
\end{remark}
 Having introduced the functional framework in which we work,
 we now prove the basic existence result in Theorem~\ref{thm.existenceLax}.
\begin{proof}[Proof of Theorem~\ref{thm.existenceLax}]
	We consider on the space $\mathbb{X}(\Omega)$ the bilinear form
     $B$ and the linear map $F$ defined, respectively, as follows:
     \begin{align*}
      & (\star)\,\,B(u,v) := \int_{\Omega}
     \langle\nabla u,\nabla v\rangle\,\d x+
     \frac{c_{N,s}}{2}\int_{\RN\times\RN}
     \!\!\!\!\!\!\!\frac{(u(x)-u(y))(v(x)-v(y))}{|x-y|^{N+2s}}\,\d x\,\d y, \\[0.2cm]
     & (\ast)\,\,F(u) := \int_{\Omega}fu\,\d x.
     \end{align*}
Using~\eqref{eq.PoincareX} and the fact that $H^1(\RN)$ is continuously
     embedded into $H^s(\RN)$, we see that both $B$ and $F$
     are (well-posed and) continuous with respect
     to the Hilbert structure of $\mathbb{X}(\Omega)$.
	 Moreover, we have that
	 \begin{align*}
	  B(u,u) & = \|\nabla u\|^2_{L^2(\Omega)}
	 + \frac{c_{N,s}}{2}\int_{\RN\times\RN}\frac{|u(x)-u(y)|^2}{|x-y|^{N+2s}}\,\d x\,\d y\ge
	 \|\nabla u\|^2_{L^2(\Omega)} 
	 = \|u\|^2_{\mathbb{X}(\Omega)},
     \end{align*}
      for all $u\in\mathbb{X}(\Omega)$.
     Hence, we are in the position to apply Lax-Milgram's theorem,
     ensuring the existence of
     a unique function $u_f\in \mathbb{X}(\Omega)$ such that
     \begin{equation} \label{eq.BufF}
      \text{$B(u_f,v) = F(v)$ for every $v\in \mathbb{X}(\Omega)$},
      \end{equation}
     further satisfying the estimate
     \begin{eqnarray*}&&\|u_f\|_{\mathbb{X}(\Omega)}\leq \mathbf{c}\,\|F\|_{(\mathbb{X}(\Omega))^*}
     =\mathbf{c}\,\sup_{\|v\|_{\mathbb{X}(\Omega)}=1}|F(v)|\le\mathbf{c}\,\sup_{\|v\|_{\mathbb{X}(\Omega)}=1}\int_{\Omega}|f|\,|v|\,\d x\\&& \le\mathbf{c}\,\sup_{\|v\|_{\mathbb{X}(\Omega)}=1}
    \|f\|_{L^2(\Omega)}\|v\|_{L^2(\Omega)}
     \leq  \mathbf{c}\,\|f\|_{L^2(\Omega)},\end{eqnarray*}
with the positive constant~$\mathbf{c}$ possibly varying from line to line
(actually, this argument shows that~$\|u_f\|_{\mathbb{X}(\Omega)}\leq \mathbf{c}\|f\|_{H^{-1}(\Omega)}$).
     {F}rom~\eqref{eq.BufF} we immediately conclude that $u_f$ is a weak solution of~\eqref{eq.mainPDE} (according to Definition~\ref{def.weaksol}),
     and the proof is complete.
    \end{proof}
    
    Since one of the aims of this paper is to
   prove regularity results for weak
   solution of~\eqref{eq.mainPB}, it is convenient to fix
   the following definition. We set
   \begin{equation} \label{eq.defSpaceLLs}
    \mathcal{C}_s(\RN) := \bigg\{u\in C(\RN,\R):\,\int_{\RN}\frac{|u(x)|}{1+|x|^{N+2s}}\,\d x
    <\infty\bigg\}.
   \end{equation}
   
   \begin{definition} \label{def.classicalsol}
    Let $f:\Omega\to\R$. We say that a function
    $u:\RN\to\R$ is a \emph{classical solution}
    of~\eqref{eq.mainPB} if it satisfies the following properties:
    \begin{itemize}
     \item[(1)] $u\in \CCs(\RN)\cap C^2(\Omega,\R)$;
     \item[(2)]
    $\LL u = f$ pointwise in $\Omega$; 
    \item[(3)] $u(x) = 0$ for every $x\in\RN\setminus\Omega$.
    \end{itemize} 
    If $u$ fulfills only (1) and (2), we say that $u$ is a \emph{classical
    solution} of~\eqref{eq.mainPDE}.
   \end{definition}
   We observe that, if $u\in\CCs(\RN)\cap C^2(\Omega,\R)$
   is a classical solution of~\eqref{eq.mainPB},
   it follows from (1) and (3) in Definition~\ref{def.classicalsol} that
   \begin{equation} \label{eq.uclassicalbounded}
    u\in L^\infty(\RN),\qquad\text{with\,\,\,$\|u\|_{L^\infty(\RN)}
    = \max_{x\in\overline{\Omega}}|u(x)|$.}
   \end{equation}
   
   \begin{remark} \label{rem.spaceCCs}
   As it is very well-known, the (linear) space
   $\CCs(\RN)$ is `good' for dealing with the fractional
   Laplacian. In fact, if $u\in C^2(\Omega,\R)\cap \CCs(\RN)$, it is possible
   to compute $(-\Delta)^s u(x)$ pointwise for every $x\in\Omega$, and
   \begin{align*}
    (-\Delta)^su(x) & = c_{N,s}\;\mathrm{P.V.}\int_{\RN}\frac{u(x)-u(y)}{|x-y|^{N+2s}}\,\d y
    \\[0.1cm]
    & = 
   -\frac{c_{N,s}}{2}\int_{\RN}\frac{u(x+z)+u(x-z)-2u(x)}{|z|^{N+2s}}\,\d z.
   \end{align*}
   Moreover, it is easy to check that
   $(-\Delta)^su\in L^\infty_{\loc}(\Omega)$.
   \end{remark}
   The next remark describes the relation between weak and classical
   solutions.
    \begin{remark} \label{rem.classical}
      Let $f\in L^2(\Omega)$ and let
      $u\in H^1(\RN)$ be a weak solution of~\eqref{eq.mainPDE}.
      If we further assume that
      $u\in \CCs(\RN)\cap C^2(\Omega,\R)$, we can compute
      $$\LL u(x) = -\Delta u(x) + (-\Delta)^s u(x)\quad\text{for every
      $x\in\Omega$}.$$
      Then, it is not difficult to check that $\LL u = f$ pointwise in $\Omega$,
      so that $u$ is a classical solution of~\eqref{eq.mainPDE}.
      Conversely, if $u\in \CCs(\RN)\cap C^2(\Omega,\R)$ is a classical
      solution of~\eqref{eq.mainPDE} further satisfying
      $u\in H^1(\RN)$, then $u$ is also a weak solution of~\eqref{eq.mainPDE}.
    \end{remark}
    
A simple consequence of Theorem~\ref{thm.existenceLax} is the solvability 
of the non\--ho\-mo\-ge\-neous Di\-ri\-chlet pro\-blem, as follows:
 
    \begin{corollary} \label{rem.solvabilitynonhom}
     Let $f\in L^2(\Omega)$ and $g\in \CCs(\RN)\cap C^2(\overline{\Omega},\R)\cap H^1(\RN)$.
     Then, there exists a unique weak solution $u\in H^1(\RN)$ of
     the non-homogeneous Dirichlet problem
     \begin{equation} \label{eq.PBnonhomog}
      \begin{cases}
    \LL u = f & \text{in $\Omega$}, \\
    u\big|_{\RN\setminus \Omega} = g.
   \end{cases}
   \end{equation}
   \end{corollary}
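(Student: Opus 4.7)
The plan is to reduce the nonhomogeneous problem \eqref{eq.PBnonhomog} to the homogeneous one of Theorem~\ref{thm.existenceLax} via the substitution $u = v + g$, which forces the new unknown $v$ to lie in $\spX$ (since $v = u - g \equiv 0$ on $\RN\setminus\Omega$). Formally $v$ should solve $\LL v = f - \LL g$ in $\Omega$, but I would avoid ever computing $\LL g$ pointwise near $\de\Omega$ (where the $C^2$ hypothesis on $g$ is useless for the nonlocal tail) and work throughout at the level of the weak formulation.

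Let $B$ and $F$ denote the bilinear form and linear functional introduced in the proof of Theorem~\ref{thm.existenceLax}. On $\spX$ I would consider the modified functional
$$F_g(\varphi) := F(\varphi) - B(g,\varphi), \qquad \varphi \in \spX,$$
and the key step is to verify the continuity of $\varphi \mapsto B(g,\varphi)$ on $\spX$. Its local part is trivially controlled by $\|\nabla g\|_{L^2(\Omega)}\|\varphi\|_{\spX}$. Its nonlocal part is estimated, via Cauchy--Schwarz, by the product of the Gagliardo $H^s(\RN)$-seminorms of $g$ and $\varphi$; both of these seminorms are finite and bounded by the respective $H^1(\RN)$-norms thanks to the continuous embedding $H^1(\RN)\hookrightarrow H^s(\RN)$ (valid since $s\in(0,1)$). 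Combined with \eqref{eq.PoincareX}, this yields $|B(g,\varphi)|\le \mathbf{c}\,\|g\|_{H^1(\RN)}\|\varphi\|_{\spX}$, so $F_g$ is a bounded linear functional on $\spX$.

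The coercivity of $B$ on $\spX$ already established in the proof of Theorem~\ref{thm.existenceLax} then allows me to invoke the Lax--Milgram theorem to obtain a unique $v\in\spX$ with $B(v,\varphi)=F_g(\varphi)$ for every $\varphi\in\spX$. Setting $u := v + g$ produces a function in $H^1(\RN)$ that equals $g$ a.e.\ on $\RN\setminus\Omega$ and satisfies
$$B(u,\varphi) = B(v,\varphi) + B(g,\varphi) = F(\varphi) = \int_{\Omega} f\varphi\,\d x$$
for every $\varphi\in C_0^\infty(\Omega,\R)$, which by Definition~\ref{def.weaksol} makes $u$ a weak solution of \eqref{eq.PBnonhomog}. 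Uniqueness follows by linearity: if $u_1, u_2$ are two such solutions, then $w := u_1 - u_2$ lies in $\spX$ and weakly satisfies $\LL w = 0$ in $\Omega$, so Theorem~\ref{thm.existenceLax} with $f=0$ (equivalently, testing against $w$ itself and using the coercivity of $B$) forces $w\equiv 0$.

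The only real obstacle I anticipate is making the nonlocal cross term inside $B(g,\varphi)$ meaningful and bounded on $\spX$; this is precisely where the hypothesis $g\in H^1(\RN)$ enters in an essential way. The additional assumptions $g\in\CCs(\RN)\cap C^2(\overline\Omega,\R)$ are not actually required for weak solvability by this approach, but are natural in view of the pointwise and classical interpretations of $\LL g$ used elsewhere in the paper.
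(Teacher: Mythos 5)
Your proof is correct, but it takes a genuinely different (and in one respect cleaner) route than the paper's. The paper exploits the full hypothesis $g\in \CCs(\RN)\cap C^2(\overline\Omega,\R)$ to compute $\LL g$ \emph{pointwise} in $\Omega$, observes that $\LL g\in L^\infty(\Omega)\subseteq L^2(\Omega)$, and then feeds $f-\LL g$ as a new $L^2$ right-hand side into Theorem~\ref{thm.existenceLax} as a black box; the correction $v$ is the weak solution of that auxiliary homogeneous problem and $u:=v+g$. You instead never leave the weak formulation: you absorb $g$ into the functional by setting $F_g(\varphi)=F(\varphi)-B(g,\varphi)$, check that $\varphi\mapsto B(g,\varphi)$ is bounded on $\spX$ using only $g\in H^1(\RN)\hookrightarrow H^s(\RN)$, and apply Lax--Milgram directly. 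What your version buys is twofold: (i) it needs only $g\in H^1(\RN)$, confirming your closing remark that the $\CCs\cap C^2(\overline\Omega)$ hypotheses are not needed for weak solvability; and (ii) it sidesteps the integration-by-parts identity $B(g,\varphi)=\int_\Omega(\LL g)\varphi\,\d x$ that the paper's reduction implicitly relies on when verifying that $u=v+g$ is a weak solution of \eqref{eq.PBnonhomog} (an issue the paper itself flags as delicate in Remark~\ref{rem.spazioGiusto}, and dismisses here with ``it is then immediate to see''). What the paper's version buys is brevity and the pointwise interpretation of $\LL g$, which is consistent with how classical solutions are handled elsewhere. Your uniqueness argument (difference of two solutions lies in $\spX$, test against itself via the density statement of Remark~\ref{rem.WeakSoluf}, use coercivity) is the standard one and is complete.
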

   
   \begin{proof}
   We observe that, since~$g\in \CCs(\RN)\cap C^2(\overline{\Omega},\R)$, the function $\LL g
     = -\Delta g+(-\Delta)^s g$ can be computed pointwise in
     $\Omega$, and
          $$\LL g\in L^\infty(\Omega)\subseteq L^2(\Omega).$$
 Hence, we can apply Theorem~\ref{thm.existenceLax} to get the existence
 of a unique weak solution~$v\in H^1(\RN)$
   of
     $$
     \begin{cases}
    \LL v = f - \LL g & \text{in $\Omega$}, \\
    v\big|_{\RN\setminus \Omega} = 0.
   \end{cases}$$
    Setting $u := v+g$, it is then immediate to see that
   $u\in H^1(\RN)$ solves~\eqref{eq.PBnonhomog}, thus completing the proof of
   Corollary~\ref{rem.solvabilitynonhom}.
    \end{proof}
    
    \begin{remark} \label{rem.spazioGiusto}
    Another functional framework naturally arising with
 the mixed operator $\LL$ is given by the spaces $\mathcal{H} := H^1(\Omega)\cap H^s(\RN)$
    and 
    $$\mathcal{H}_0 := 
    \big\{u\in\mathcal{H}:\,\text{$u\equiv 0$ a.e.\,in $\RN\setminus\Omega$}\big\}.$$
    In principle, one is tempted to define
a \emph{$\mathcal{H}$-weak solution of~\eqref{eq.mainPDE}}
as a function $u\in\mathcal{H}$ such that~\eqref{eq.weaksoldef}
    holds for every $\varphi\in C_0^\infty(\Omega,\R)$, and 
    a \emph{$\mathcal{H}_0$-weak solution of~\eqref{eq.mainPB}} as a $\mathcal{H}$-weak solution
    of~\eqref{eq.mainPDE} which belongs to $\mathcal{H}_0$.
    
    The use of Lax-Milgram's theorem would provide the existence of
	a unique $\mathcal{H}_0$-solution $v_f$ of~\eqref{eq.mainPB} (for
	some $f\in L^2(\Omega)$). Nevertheless, we prefer to use the functional
	setting in~\eqref{eq.defXOmega}, since it possesses better density properties
	for smooth compactly supported functions, allowing us to use the positive and negative parts
	of the solution as a test function in~\eqref{eq.weaksoldef}.
	
	This technical aspect is crucial for us in proving a weak maximum principle as in 
 Theorem~\ref{thm.WMPweak}.
 
 Another possible functional setting consists in requiring that, 
 for weak solutions,
	identity~\eqref{eq.weaksoldef} is fulfilled \emph{for all $v\in\mathcal{H}_0$}.
	Nevertheless, this approach 
causes a technical difficulty in the integration by parts formula, which is needed
	to prove that a classical solution
	of~\eqref{eq.mainPDE}-\eqref{eq.mainPB} is also a weak
	solution, thus confirming that our choice in~\eqref{eq.defXOmega} is likely to be
	structurally more robust to deal with the PDE properties of the solutions.
    \end{remark}
  
  \section{Some maximum principles for $\LL$} \label{sec.MPsLL}
  
  In this section we establish some weak/strong maximum principles
  for $\LL$, as defined in~\eqref{LOPERATO1}.
  To begin with, we prove the weak maximum principle for {\emph{weak solutions} in
  Theorem~\ref{thm.WMPweak}.

  \begin{proof}[Proof of Theorem~\ref{thm.WMPweak}]
   Arguing by contradiction, we assume that there exists
   a set $E\subseteq\Omega$, with \emph{positive Lebesgue measure},
   such that $u < 0$ a.e.\,on $E$. We then define
   $$w := u_- = \max\{-u,0\}$$
   and we observe that, since $u\in H^1(\RN)$ and $u\geq 0$ a.e.\,in $\RN\setminus\Omega$,
   one has 
\begin{equation}\label{if9ut9y49y}
\text{$w\in H^1(\RN)$ and $w\equiv 0$ a.e.\,in $\RN\setminus\Omega$}.\end{equation}
   Moreover,
   \begin{equation}\label{if9ut9y49y2}
   w = -u > 0 \qquad {\mbox{on $E$}}.
   \end{equation}
   In particular, recalling formula~\eqref{eq.defXOmega},
   from~\eqref{if9ut9y49y} we have that~$w\in\spX$.
   Hence, according to formula~\eqref{eq.weaksoldefDensity}
   in Remark~\ref{rem.WeakSoluf}, we can use $w$ as a test function
   in~\eqref{QUESTA}, obtaining that
    \begin{equation} \label{eq.dacontraddire}\begin{split}
    0&\le\; 
    \int_{\Omega}
     \langle\nabla u,\nabla w\rangle\,\d x
   +\int_{\RN\times\RN}
     \!\!\!\!\!\!\!\frac{(u(x)-u(y))(w(x)-w(y))}{|x-y|^{N+2s}}\,\d x\,\d y\\&=\;
     -\int_{\Omega}
     |\nabla  u_-|^2\,\d x
   +\int_{\RN\times\RN}
     \!\!\!\!\!\!\!\frac{(u(x)-u(y))(w(x)-w(y))}{|x-y|^{N+2s}}\,\d x\,\d y\\&\le
     \int_{\RN\times\RN}
     \!\!\!\!\!\!\!\frac{(u(x)-u(y))(w(x)-w(y))}{|x-y|^{N+2s}}\,\d x\,\d y
      .\end{split}
   \end{equation}
   
On the other hand, 
   denoting by $v$ the positive part of $u$, that is~$v := u_+ = \max\{u,0\}$,
   we have that~$u=v-w$. Therefore, utilizing~\eqref{if9ut9y49y} and~\eqref{if9ut9y49y2}, we have that
   \begin{equation}\begin{split}\label{u8547684yy8t}
     & \int_{\RN\times\RN}\frac{\big(u(x)-u(y)\big)
   \big(w(x)-w(y)\big)}{|x-y|^{N+2s}}\,\d x\,\d y \\
   & \qquad = 
   \int_{\RN\times\RN}\frac{\big(v(x)-v(y)\big)
   \big(w(x)-w(y)\big)}{|x-y|^{N+2s}}\,\d x\,\d y -
   \int_{\RN\times\RN}\frac{|w(x)-w(y)|^2}{|x-y|^{N+2s}}\,\d x\,\d y 
   \\
   & \qquad \leq 
   \int_{\RN\times\RN}\frac{\big(v(x)-v(y)\big)
   \big(w(x)-w(y)\big)}{|x-y|^{N+2s}}\,\d x\,\d y -
   \int_{E\times(\RN\setminus\Omega)}
   \frac{|w(x)-w(y)|^2}{|x-y|^{N+2s}}\,\d x\,\d y
   \\
   & \qquad
   =
   \int_{\RN\times\RN}\frac{\big(v(x)-v(y)\big)
   \big(w(x)-w(y)\big)}{|x-y|^{N+2s}}\,\d x\,\d y -
   \int_{E\times(\RN\setminus\Omega)}
   \frac{|w(x)|^2}{|x-y|^{N+2s}}\,\d x\,\d y
    \\
   & \qquad
   < \int_{\RN\times\RN}\frac{\big(v(x)-v(y)\big)
   \big(w(x)-w(y)\big)}{|x-y|^{N+2s}}\,\d x\,\d y.
   \end{split}
   \end{equation}
	Also, a `case-by-case' computation shows that
   $$(v(x)-v(y))(w(x)-w(y)) = (u_+(x)-u_+(y))(u_-(x)-u_-(y))
   \leq 0$$
   for almost every $x,y\in\RN$. Plugging this information into~\eqref{u8547684yy8t},
   we obtain that
   $$\int_{\RN\times\RN}
     \!\!\!\!\!\!\!\frac{(u(x)-u(y))(w(x)-w(y))}{|x-y|^{N+2s}}\,\d x\,\d y
     < 0.$$
   This is in contradiction with~\eqref{eq.dacontraddire}, and thus~$u\geq 0$ a.e.\,on $\Omega$, as desired.
  \end{proof}
  
 As regards \emph{classical solutions}, we now prove
 Theorem~\ref{NUOVO}:

 \begin{proof}[Proof of Theorem~\ref{NUOVO}]
First of all, we establish the weak maximum principle in~\eqref{thm.WMPLL}.
For this, we suppose by contradiction that there exists a point $\xi\in\Omega$
such that $u(\xi) < 0$. 
Since $u\in C(\RN,\R)$ and
  $\overline{\Omega}$ is compact, we can thus find
  $x_0\in\overline{\Omega}$ such that
  \begin{equation} \label{eq.defpointx0}
   u(x_0) = \min_{\overline{\Omega }}u < 0.
   \end{equation}
Moreover, since~$u\geq 0$ in~$\R^N\setminus\Omega$, we necessarily
  have that $x_0\in\Omega$, that is $x_0$ in an \emph{interior}
  minimum point for $u$ in $\Omega$. Hence,
  $\Delta u(x_0) \geq 0$ and
  \begin{equation} \label{eq.tocontradict}
   (-\Delta)^s u(x_0) ={\mathcal{L}}u(x_0)+ \Delta u(x_0)
  \geq \Delta u(x_0)\geq 0.
  \end{equation}
Moreover, we have that~$u(x_0)\leq u(x)$ for all~$x\in\R^N$.
Thus, we get
  \begin{equation} \label{eq.touseintegrand}
  \begin{split}
   (-\Delta)^su(x_0) & = 
  c_{N,s}\;\mathrm{P.V.}\int_{\RN}\frac{u(x_0)-u(y)}{|x_0-y|^{N+2s}}\,\d y 
  = c_{N,s}\int_{\RN}\frac{u(x_0)-u(y)}{|x_0-y|^{N+2s}}\,\d y \leq 0.
  \end{split}
  \end{equation}
 Owing to~\eqref{eq.tocontradict}, and taking into account the fact
 the integrand function in~\eqref{eq.touseintegrand} is non-positive, we then conclude that
 $$\text{$u\equiv u(x_0)$ on $\RN$}.$$ 
 In particular, since $u\geq 0$ in $\RN\setminus\Omega$,
 we get $u(x_0)\geq 0$, but this is clearly in contradiction with~\eqref{eq.defpointx0}.
 Hence, $u\geq 0$ on $\Omega$, which establishes~\eqref{thm.WMPLL}, as desired.

Now we prove the 
strong maximum principle for classical solutions in~\eqref{thm.SMPLL}.
To this end, we recall~\eqref{eq.tocontradict} and we note that
  \begin{equation}\label{eq.tocontradictE}
0\le (-\Delta)^su(x_0) = c_{N,s}\;\mathrm{P.V.}\int_{\RN}
  \frac{u(x_0)-u(y)}{|x_0-y|^{N+2s}}\,\d y  = -c_{N,s}\int_{\RN}\frac{u(y)}{|x_0-y|^{N+2s}}\,\d y  .\end{equation}
Additionally, by~\eqref{thm.WMPLL}, we know that~$u\ge0$ in~$\RN$. 
Comparing this with~\eqref{eq.tocontradictE}, we obtain that~$u\equiv 0$ throughout $\RN$. This establishes~\eqref{thm.SMPLL} as desired.
 \end{proof}
  We shall see in Appendix~\ref{sec.appendix} that, 
  due to the presence of the non-local term $(-\Delta)^s$, 
  a maximum principle analogous to that in \eqref{thm.SMPLL}
  \emph{does not hold} if the condition $u\geq 0$ is satisfied only
  on $\de\Omega$ (as in the classical case).
  Furthermore, we shall also show that
  maximum principles analogous to
  Theorem~\ref{thm.WMPweak} and in~\eqref{thm.WMPLL} of Theorem~\ref{NUOVO}
  \emph{do not hold}
  for
  $$\LL' := \Delta + (-\Delta)^s.$$

As a simple consequence of the weak maximum principle in~\eqref{thm.WMPLL}
we have a uniqueness result, as follows:

  \begin{corollary} \label{cor.uniqueClassical}
  There exists \emph{at most one} classical solution
  of~\eqref{eq.mainPB}.
 \end{corollary}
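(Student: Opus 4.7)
The plan is to exploit linearity and reduce the uniqueness question to the weak maximum principle for classical solutions proved in Theorem~\ref{NUOVO}, specifically the implication~\eqref{thm.WMPLL}. Suppose $u_1, u_2$ are two classical solutions of~\eqref{eq.mainPB} in the sense of Definition~\ref{def.classicalsol}, and set $w:=u_1-u_2$. First I would verify that $w$ satisfies all the hypotheses of Theorem~\ref{NUOVO}: by (1) of Definition~\ref{def.classicalsol}, both $u_i$ lie in $\CCs(\RN)\cap C^2(\Omega,\R)$, and this space is linear, so $w\in C(\RN,\R)\cap C^2(\Omega,\R)$; the weighted integrability condition $\int_{\RN}|w(x)|/(1+|x|^{N+2s})\,\d x<\infty$ follows from the triangle inequality and the corresponding condition for $u_1,u_2$ inherited from $\CCs(\RN)$. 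By (2)--(3) of Definition~\ref{def.classicalsol}, $\LL w=f-f=0$ pointwise in $\Omega$, and $w\equiv 0$ on $\RN\setminus\Omega$.

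Next, I would apply~\eqref{thm.WMPLL} to $w$: since $\LL w\ge 0$ pointwise in $\Omega$ and $w\ge 0$ in $\RN\setminus\Omega$, Theorem~\ref{NUOVO} yields $w\ge 0$ in $\Omega$. Symmetrically, the function $-w$ is also a classical solution of $\LL(-w)=0$ with $-w\equiv 0$ in $\RN\setminus\Omega$, and it fulfills the same hypotheses. A second application of~\eqref{thm.WMPLL} gives $-w\ge 0$ in $\Omega$, that is, $w\le 0$ in $\Omega$. Combining the two inequalities, $w\equiv 0$ in $\Omega$, and together with the external vanishing we conclude $w\equiv 0$ on $\RN$, which means $u_1=u_2$.

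There is essentially no real obstacle here: the only point requiring a moment of care is to ensure that the difference of two $\CCs$-functions is again in $\CCs$, which is immediate from linearity of the defining integrability condition, and that subtracting the equations preserves the pointwise identity in $\Omega$, which follows from the pointwise expression for $(-\Delta)^s w$ recalled in Remark~\ref{rem.spaceCCs} applied to each $u_i$. No new analytic input beyond Theorem~\ref{NUOVO} is needed.
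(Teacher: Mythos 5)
Your argument is correct and coincides with the paper's own proof: both form the difference $w=u_1-u_2$, note it is a classical solution of the homogeneous problem, and apply the weak maximum principle~\eqref{thm.WMPLL} of Theorem~\ref{NUOVO} to $\pm w$ to conclude $w\equiv 0$. The only difference is that you spell out the two-sided application of~\eqref{thm.WMPLL}, which the paper leaves implicit.
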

 \begin{proof}
  Let $u_1,u_2\in \CCs(\RN)\cap C^2(\Omega,\R)$ be two classical
  solutions of problem~\eqref{eq.mainPB}. Introducing the function
  $v:= u_1-u_2$, it is immediate to recognize that \medskip
  
  (a)\,\,$v\in \CCs(\RN)\cap C^2(\Omega,\R)$; \medskip
  
  (b)\,\,$v$ is a solution of the Dirichlet problem
  $$
   \begin{cases}
   \LL u = -\Delta u + (-\Delta)^s u = 0 & \text{pointwise in $\Omega$}, \\
   u = 0 & \text{in $\RN\setminus\Omega$}.
  \end{cases}
  $$
  Thus, by the weak maximum principle in~\eqref{thm.WMPLL} we readily conclude that $v\equiv 0$
  on $\RN$, so that~$u_1\equiv u_2$ on $\RN$. 
  This ends the proof.
 \end{proof}

\section{Interior and boundary regularity for $\LL$} \label{sec.regularity}

The main aim of this section is to prove
   both interior and boundary regularity
   for $\LL$. To be more precise, we
   first establish interior $H^m$-regularity
   for the weak solutions of~\eqref{eq.mainPDE}, that is
   Theorem~\ref{thm.higherregul};
   then, we prove boundary regularity for
   the solutions of~\eqref{eq.mainPB},
   that is Theorem~\ref{BDTH}.
   
   \subsection{Interior $H^m$-regularity and proof of Theorem~\ref{thm.higherregul}} \label{subsec.interior}
    To begin with, we prove the fol\-low\-ing~$H^2$\--re\-gu\-la\-ri\-ty 
    theorem, which in turn will serve as
    the basic step to prove interior $H^m$-regularity in Theorem~\ref{thm.higherregul}.
    
    \begin{theorem} \label{thm.mainregul}
     Let $f\in L^2(\Omega)$ and let $u\in H^1(\RN)$ be a weak solution of
     equation~\eqref{eq.mainPDE}. Then, $u\in H^2_{\loc}(\Omega)$.
     Furthermore, given any open set $V$ with $\overline{V}
     \subseteq\Omega$, there exists a constant $\Lambda > 0$, independent of
     $u$, such that
     \begin{equation} \label{eq.estimregulH2}
      \|u\|_{H^2(V)} \leq \Lambda\Big(\|f\|_{L^2(\Omega)}+\| u\|_{H^1(\RN)}\Big).
     \end{equation}
    \end{theorem}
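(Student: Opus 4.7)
The plan is to adapt the classical Nirenberg difference-quotient method to the mixed operator~$\LL$. Since $u\in H^1(\RN)$, it suffices to prove that, for each $k\in\{1,\dots,N\}$, the difference quotients $D_k^h u(x):=h^{-1}(u(x+he_k)-u(x))$ are bounded in $H^1(V)$ uniformly as $|h|\to 0$. To this end, fix an intermediate open set~$V'$ with $\overline V\subseteq V'$ and $\overline{V'}\subseteq\Omega$, together with a cutoff $\eta\in C_0^\infty(V',\R)$ satisfying $\eta\equiv 1$ on~$V$ and $0\le\eta\le 1$. For $|h|$ small enough, the function $\phi:=-D_k^{-h}(\eta^2 D_k^h u)$ lies in $\spX$ and is therefore admissible as a test function in~\eqref{eq.weaksoldefDensity}.

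For the local part, the standard integration-by-parts rule $\int_\RN f\,D_k^{-h}g\,\d x=-\int_\RN D_k^h f\,g\,\d x$ yields
\[
\int_\Omega\langle\nabla u,\nabla\phi\rangle\,\d x=\int_\RN\eta^2|\nabla D_k^h u|^2\,\d x+2\int_\RN\eta\,(D_k^h u)\,\langle\nabla\eta,\nabla D_k^h u\rangle\,\d x,
\]
whose cross term can be absorbed, via Young's inequality, into a small multiple of $\|\eta\nabla D_k^h u\|_{L^2(\RN)}^2$ plus a constant times $\|u\|_{H^1(\RN)}^2$. The source term is controlled by $|\int f\phi\,\d x|\le\|f\|_{L^2(\Omega)}\|\nabla(\eta^2 D_k^h u)\|_{L^2(\RN)}$, thanks to the classical bound $\|D_k^{-h}g\|_{L^2}\le\|\nabla g\|_{L^2}$, and splits in the same way.

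For the nonlocal part, denote by $\mathcal{E}_s(\cdot,\cdot)$ the bilinear form in~\eqref{eq.weaksoldef}. The translation invariance of the kernel $|x-y|^{-N-2s}$, via the substitution $x\mapsto x-he_k$, $y\mapsto y-he_k$, produces the ``integration-by-parts'' identity $\mathcal{E}_s(u,\phi)=\mathcal{E}_s(D_k^h u,\eta^2 D_k^h u)$. Setting $w:=D_k^h u$ and employing the pointwise algebraic identity
\[
(w(x)-w(y))(\eta^2(x)w(x)-\eta^2(y)w(y))=(\eta(x)w(x)-\eta(y)w(y))^2-w(x)w(y)(\eta(x)-\eta(y))^2,
\]
one splits $\mathcal{E}_s(w,\eta^2 w)$ as the nonnegative Gagliardo seminorm (constant factor aside) $[\eta w]^2_{H^s(\RN)}$ minus a commutator term. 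Since $\eta$ is Lipschitz and compactly supported, this commutator is bounded by $C\|w\|_{L^2(\RN)}^2\le C\|u\|_{H^1(\RN)}^2$: near the diagonal ($|x-y|<1$) one uses $|\eta(x)-\eta(y)|\le\|\nabla\eta\|_\infty|x-y|$ together with $s<1$ to produce an integrable kernel, while the far-away part ($|x-y|\ge 1$) is bounded using $|\eta(x)-\eta(y)|\le 2\|\eta\|_\infty$ and the integrability of $|x-y|^{-N-2s}$.

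Combining these three pieces in the weak equation and absorbing a couple of small multiples of $\|\eta\nabla D_k^h u\|_{L^2(\RN)}^2$ from the right-hand side, one arrives at
\[
\|\eta\nabla D_k^h u\|_{L^2(\RN)}^2\le C\bigl(\|f\|_{L^2(\Omega)}^2+\|u\|_{H^1(\RN)}^2\bigr),
\]
with $C$ independent of~$h$. Passing to the limit $h\to 0$ yields $\de_k u\in H^1(V)$ for every $k\in\{1,\dots,N\}$, whence $u\in H^2(V)$ together with the estimate~\eqref{eq.estimregulH2}. The main obstacle is the nonlocal term: the Nirenberg scheme is by nature local, so one has to locate the right algebraic decomposition that isolates a nonnegative $H^s$-seminorm of $\eta w$ and leaves behind a commutator controllable by the lower-order $L^2$-norm of the difference quotient. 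Once this is in place, the nonlocal operator acts as a mere lower-order perturbation, and the classical argument closes smoothly.
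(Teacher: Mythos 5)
Your proposal is correct and follows essentially the same route as the paper: Nirenberg difference quotients localized by a cutoff, tested against (up to a sign convention) $D_k^{-h}(\eta^2 D_k^h u)$, with translation invariance of the kernel serving as the integration by parts for the Gagliardo form, the source and local cross terms absorbed by Young's inequality, and the cutoff error controlled by splitting the kernel near and far from the diagonal. The only divergence is in one algebraic sub-step: you decompose $(w(x)-w(y))(\eta^2(x)w(x)-\eta^2(y)w(y))$ as $(\eta(x)w(x)-\eta(y)w(y))^2-w(x)w(y)(\eta(x)-\eta(y))^2$, isolating the exact seminorm $[\eta w]_{H^s(\RN)}^2$ plus a commutator bounded by $C\|D_k^h u\|_{L^2(\RN)}^2$, whereas the paper keeps the weighted quantity $J_{0,s}=\iint\zeta^2(x)|w(x)-w(y)|^2|x-y|^{-N-2s}\,\d x\,\d y$ and absorbs the two cross terms $J_{1,s},J_{2,s}$ into it by Young's inequality --- both decompositions produce a nonnegative leading term that can be discarded and the same lower-order remainder, so the conclusion is identical.
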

    \begin{proof}
     Let $V$ be a fixed open set with $\overline{V}\subseteq\Omega$,
     and let $\rho_0 > 0$ be such that
     $$V_{\rho}:=\{x\in\RN:\,\mathrm{dist}(x,V) < \rho\}\subseteq\Omega
     \quad\text{for every $\rho\in[0,2\rho_0]$}.$$
     Moreover, let $\zeta\in C_0^\infty(\RN,\R)$
     be a cut-off function
     satisfying \medskip
     
     (a)\,\,$\zeta \equiv 1$ on $V$ and 
     $\mathrm{supp}(\zeta)\subseteq V_{\rho_0}$; \vspace{0.1cm}
     
     (b)\,\,$0\leq\zeta\leq 1$ on $\RN$. \medskip
     
     \noindent Finally, for every fixed $k\in\{1,\ldots,N\}$
     and every $0<|h|<\rho_0$, we set
     \begin{equation}\label{diuewt73pooii}
     \varphi := D_k^{-h}\big(\zeta^2\,D^h_k u\big),\qquad\text{where
     $D^h_k w(x) := \frac{w(x+he_k)-w(x)}{h}$}.\end{equation}
     We notice that, since~$u\in H^1(\RN)$, then~$D^h_k u\in H^1(\RN)$.
     Also, in light of~(a), we have that~$\zeta\in C_0^\infty(V_{\rho_0},\R)$.
As a result,
     $$\hat{\varphi} := \zeta^2\,D_k^h u\in H^1(\RN)\qquad\text{and}\qquad
     \text{supp}(\hat\varphi)\subseteq V_{\rho_0}\subseteq\Omega.$$
     As a consequence, by the definition of~$\varphi$ in~\eqref{diuewt73pooii},
     we see that~$\varphi\in H^1(\RN)$
     and $\mathrm{supp}(\varphi)\subseteq V_{2\rho_0}\subseteq\Omega$,
     which implies that~$\varphi\in H_0^1(\Omega)$.
     Therefore, we are in the position of
     using~$\varphi$
     as a test function in~\eqref{eq.weaksoldefDensity}, obtaining
     (after a standard `integration by parts'
     for difference quotients)
     \begin{equation} \label{eq.estimEvans1}
     \begin{split}
      & \sum_{i = 1}^N\int_{\Omega}\zeta^2\,
      |D^h_k(\de_{x_i}u)|^2\,\d x
      + \sum_{i = 1}^N  2\int_{\Omega}
      \zeta\,D^h_k u\,\de_{x_i}\zeta\,D^h_k(\de_{x_i}u)\,\d x
      \\[0.1cm]
      & \qquad
      + \frac{c_{N,s}}{2}\int_{\RN\times\RN}
      \frac{\big(D^h_k u(x)-D^h_k u(y)\big)\big(\zeta^2(x)D_k^h u(x)-
      \zeta^2(y)D_k^h u(y)\big)}{|x-y|^{N+2s}}\,\d x\,\d y \\[0.1cm]
      & \qquad 
      = \int_{\Omega}f\varphi\,\d x.
      \end{split}
     \end{equation}
     Now, by exploiting Cauchy-Swcharz's inequality
     and the classical Young inequality (with $\varepsilon = 1/2$), we 
     obtain the following estimate
     \begin{equation} \label{eq.estimEvans2}
     \begin{split}
	& \bigg|\sum_{i = 1}^N  2\int_{\Omega}
      \zeta\,D^h_k u\,\de_{x_i}\zeta\,D^h_k(\de_{x_i}u)\,\d x\bigg|
      \leq 2\int_{\Omega}\zeta\,|D^h_k u|\,|\nabla \zeta|\,|D_k^h(\nabla u)|\,\d x \\[0.1cm]
      & \qquad\leq
      \frac{1}{2}\int_{\Omega}\zeta^2\,|D_k^h(\nabla u)|^2\,\d x
      + 2\,\big(\sup_{\RN}|\nabla\zeta|\big)\cdot\int_{V_{\rho_0}}|D^h_k u|^2\,\d x \\[0.1cm]
      & \qquad \leq 
      \frac{1}{2}\int_{\Omega}\zeta^2\,|D_k^h(\nabla u)|^2\,\d x
      + C\,\int_{\Omega}|\nabla u|^2\,\d x,
      \end{split}
     \end{equation}
     for some~$C>0$,
     where in the last inequality we have used the fact that
     \begin{equation} \label{eq.estimEVANSdff}
      \int_{\RN}|D^h_k \omega|^2\,\d x\leq
     4N^2\int_{\RN}|\nabla \omega|^2\,\d x, \qquad\text{for every $\omega\in H^1(\RN)$}
     \end{equation}
     (see, e.g.,~\cite[Theorem.\,3, Chapter\,5.8.2]{Evans}).
     Gathering together estimates~\eqref{eq.estimEvans1}
     and~\eqref{eq.estimEvans2}, we get
     \begin{equation} \label{eq.EvansStepIII}
     \begin{split}
      & \frac{1}{2}\int_{\Omega}\zeta^2\,|D_k^h(\nabla u)|^2\,\d x
      - C \,\int_{\Omega}|\nabla u|^2\,\d x \\[0.1cm]
      & \qquad
      + \frac{c_{N,s}}{2}\int_{\RN\times\RN}
      \frac{\big(D^h_k u(x)-D^h_k u(y)\big)\big(\zeta^2(x)D_k^h u(x)-
      \zeta^2(y)D_k^h u(y)\big)}{|x-y|^{N+2s}}\,\d x\,\d y \\[0.1cm]
      & \qquad 
      \le \int_{\Omega}f\varphi\,\d x.
      \end{split}
     \end{equation}
     
     Now we estimate the integral in the right hand side
     of~\eqref{eq.EvansStepIII}. To this end we first observe that, recalling~\eqref{diuewt73pooii}
     and exploiting once again~\eqref{eq.estimEVANSdff}, we have
     \begin{equation*}
      \begin{split}
       & \int_{\Omega}\varphi^2\,\d x 
       = \int_{\RN}\big|D^{-h}_k\big(\zeta^2 D_k^h u\big)\big|^2\,\d x\\&\qquad
       \leq 4N^2\,\int_{\RN}|\nabla (\zeta^2 D^h_k u)|^2\,\d x 
        = C\,\int_{V_{\rho_0}}|\nabla (\zeta^2 D^h_k u)|^2\,\d x \\
       & \qquad \leq C\bigg(\int_{V_{\rho_0}}|D_k^hu|^2\,\d x+\int_{\Omega}
       \zeta^2\,|D_k^h(\nabla u)|^2\,\d x\bigg) \\&\qquad \leq C
       \bigg(\int_{\Omega}|\nabla u|^2\,\d x
       + \int_{\Omega}\zeta^2\,|D_k^h(\nabla u)|^2\,\d x\bigg),
      \end{split}
     \end{equation*}
     up to renaming~$C>0$ from line to line.
     {F}rom this, using Young's inequality (with $\varepsilon = 1/(4C)$), we get
     \begin{equation} \label{eq.EvansStepIV}
      \begin{split}
      & \bigg|\int_{\Omega}f\varphi\,\d x\bigg|
      \leq \varepsilon\int_{\Omega}\varphi^2\,\d x+\frac{1}{\varepsilon} 
      \int_{\Omega}f^2\,\d x \\[0.1cm]
      & \qquad\leq
      \frac{1}{4}\int_{\Omega}|\nabla u|^2\,\d x
       + \frac{1}{4}\int_{\Omega}\zeta^2\,|D_k^h(\nabla u)|^2\,\d x
       +4C \int_{\Omega}f^2\,\d x.
      \end{split}
     \end{equation}
     By combining~\eqref{eq.EvansStepIV} with~\eqref{eq.EvansStepIII}, we then obtain
     \begin{equation} \label{eq.EvansStepV}
     \begin{split}
      & \frac{1}{4}\int_{\Omega}\zeta^2\,|D_k^h(\nabla u)|^2\,\d x \\[0.1cm]
      & \qquad
      +\frac{c_{N,s}}{2}\int_{\RN\times\RN}
      \frac{\big(D^h_k u(x)-D^h_k u(y)\big)\big(\zeta^2(x)D_k^h u(x)-
      \zeta^2(y)D_k^h u(y)\big)}{|x-y|^{N+2s}}\,\d x\,\d y \\[0.1cm]
      & \qquad \leq C\bigg(\int_{\Omega}f^2\,\d x
      +\int_{\Omega}|\nabla u|^2\,\d x\bigg),
      \end{split}
     \end{equation}
     up to relabeling~$C>0$.
  
     We now provide a careful estimate of
     the non-local term in the left hand side of~\eqref{eq.EvansStepV}, i.e,
\begin{equation}\label{h67546yghgvb}
J_s := \int_{\RN\times\RN}
      \frac{\big(D^h_k u(x)-D^h_k u(y)\big)\big(\zeta^2(x)D_k^h u(x)-
      \zeta^2(y)D_k^h u(y)\big)}{|x-y|^{N+2s}}\,\d x\,\d y.\end{equation}
     To this end we first notice that, with obvious algebraic
     manipulation, we can write
     \begin{equation} \label{eq.splitJs}
       J_s = J_{0,s} + J_{1,s}+J_{2,s},
      \end{equation}
      where
      \begin{equation*}\begin{split}
      J_{0,s}&\; :=
      \int_{\RN\times\RN}
      \zeta^2(x)\cdot\frac{|D_k^hu(x)-D_k^hu(y)|^2}{|x-y|^{N+2s}}\,\d x\,\d y,
      \\
      J_{1,s}&\, := \int_{\RN\times\RN}
      D_k^hu(y)\,\zeta(x)\cdot\frac{
      \big(D^h_k u(x)-D^h_k u(y)\big)\big(\zeta(x)-\zeta(y)\big)}{|x-y|^{N+2s}}\,\d x\,\d y \\
    {\mbox{and }}\quad J_{2,s}&\;:=
     \int_{\RN\times\RN}D_k^hu(y)\,\zeta(y)\cdot\frac{
      \big(D^h_k u(x)-D^h_k u(y)\big)\big(\zeta(x)-\zeta(y)\big)}{|x-y|^{N+2s}}\,\d x\,\d y.
      \end{split}
     \end{equation*}
     We notice that, by exchanging the variables~$x$ and~$y$, we see that
     \begin{equation}\begin{split}\label{848476y85y8}
     J_{0,s}\;&=
      \int_{\RN\times\RN}
      \zeta^2(y)\cdot\frac{|D_k^hu(y)-D_k^hu(x)|^2}{|y-x|^{N+2s}}\,\d x\,\d y\\
      \;&=
      \int_{\RN\times\RN}
      \zeta^2(y)\cdot\frac{|D_k^hu(x)-D_k^hu(y)|^2}{|x-y|^{N+2s}}\,\d x\,\d y.
    \end{split} \end{equation}
     Moreover, since $\zeta\in C_0^\infty(\RN,\R)$, for every $y\in\RN$ we have
     \begin{equation} \label{eq.estimateintegralpsi}
      \begin{split}
      & \int_{\RN}\frac{|\zeta(x)-\zeta(y)|^2}{|x-y|^{N+2s}}\,\d x\\
       \leq \;&\Big(\sup_{\RN}|\nabla \zeta|^2\Big)\cdot\int_{\{|x-y|\leq 1\}}
       \frac{\d x}{|x-y|^{N+2(s-1)}} +4\,\Big(\sup_{\RN}|\zeta|^2\Big)\cdot\int_{\{|x-y| > 1\}}
       \frac{\d x}{|x-y|^{N+2s}}
\\       =: \;&\mathbf{c}(\zeta,s).
     \end{split}
     \end{equation}
     Using Young's inequality once again, \eqref{eq.estimateintegralpsi} and~\eqref{eq.estimEVANSdff},
     we can estimate $J_{1,s}$ as follows:
     \begin{equation} \label{eq.estimJ1s}
      \begin{split}
       |J_{1,s}| \leq\;& \frac{1}{4}\,\int_{\RN\times\RN}
      \zeta^2(x)\,\cdot\frac{|D_k^hu(x)-D_k^hu(y)|^2}{|x-y|^{N+2s}}\,\d x\,\d y \\&\qquad
    + 4\int_{\RN\times\RN}
      |D_k^h u(y)|^2\cdot\frac{|\zeta(x)-\zeta(y)|^2}{|x-y|^{N+2s}}\,\d x\,\d y
      \\
            \leq \;&\frac{J_{0,s}}{4}
      + 4\,\mathbf{c}(\zeta,s)\,\int_{\RN}|D_k^h u(y)|^2\,\d y \\
      \leq\;& \frac{J_{0,s}}{4}+4\,\mathbf{c}(\zeta,s)\,\int_{\RN}|\nabla u|^2\,\d x.
      \end{split}
     \end{equation}
     Similarly, making again use of Young's inequality and~\eqref{eq.estimEVANSdff},
     and recalling~\eqref{848476y85y8},
     we have the following estimate for $J_{2,s}$:
     \begin{equation} \label{eq.estimJ2s}
      \begin{split}
        |J_{2,s}| \leq\;& \frac{1}{4}\,\int_{\RN\times\RN}
      \zeta^2(y)\,\cdot\frac{|D_k^hu(x)-D_k^hu(y)|^2}{|x-y|^{N+2s}}\,\d x\,\d y 
      \\
      & \qquad + 4\int_{\RN\times\RN}
      |D_k^h u(y)|^2\cdot\frac{|\zeta(x)-\zeta(y)|^2}{|x-y|^{N+2s}}\,\d x\,\d y
      \\
      \leq \;&\frac{J_{0,s}}{4}
      + 4\,\mathbf{c}(\zeta,s)\,\int_{\RN}|D_k^h u(y)|^2\,\d y \\
      \leq \;&\frac{J_{0,s}}{4}+4\,\mathbf{c}(\zeta,s)\,\int_{\RN}|\nabla u|^2\,\d x.
      \end{split}
     \end{equation}
     Gathering together~\eqref{eq.splitJs}, \eqref{eq.estimJ1s} and~\eqref{eq.estimJ2s}, we obtain that
     \begin{equation} \label{eq.estimJsfinal}
      J_s \geq \frac{J_{0,s}}{2} - 8\,\mathbf{c}(\zeta,s)\int_{\RN}|\nabla u|^2\,\d x.
     \end{equation}
     Finally, by combining~\eqref{eq.EvansStepV} and~\eqref{eq.estimJsfinal},
     and recalling~\eqref{h67546yghgvb},
     we derive
     \begin{align*}
      & \frac{1}{4}\int_{\Omega}\zeta^2\,|D_k^h(\nabla u)|^2\,\d x 
      + \frac{c_{N,s}}{2}\cdot\frac{J_{0,s}}{2} \\[0.1cm]
      & \qquad \leq
    4{c_{N,s}}\,\mathbf{c}(\zeta,s)\int_{\RN}|\nabla u|^2\,\d x
      + C\bigg(\int_{\Omega}f^2\,\d x
      +\int_{\Omega}|\nabla u|^2\,\d x\bigg) \\[0.1cm]
      & \qquad \leq {C}\bigg(\int_{\Omega}f^2\,\d x
      +\int_{{\RN}}|\nabla u|^2\,\d x\bigg),
     \end{align*}
     up to renaming~$C>0$.
     {F}rom this, since $J_{0,s}\geq 0$, we get
     \begin{equation*}
     \begin{split}
      \int_{V}|D_k^h(\nabla u)|^2\,\d x & \leq 
      \int_{\Omega}\zeta^2\,|D_k^h(\nabla u)|^2\,\d x \leq  {C}\bigg(\int_{\Omega}f^2\,\d x
      +\int_{\RN}|\nabla u|^2\,\d x\bigg),
      \end{split}
     \end{equation*}
     and thus, owing to~\cite[Theorem\,3, Chapter\,5.8.2]{Evans},
     we conclude that 
     $u\in H^2(V)$ and
     $$\|u\|_{H^2(V)}\leq \Lambda\Big(\|f\|_{L^2(\Omega)}+
     \|u\|_{H^1(\RN)}\Big),$$
     for a suitable $\Lambda > 0$. Finally, since
     a careful inspection of the proof 
     shows that $\Lambda$ does not depend on $u$,
     we also obtain estimate~\eqref{eq.estimregulH2}.
    \end{proof}
    
    Starting from Theorem~\ref{thm.mainregul}, our next aim
    it to establish Theorem~\ref{thm.higherregul}.
    In contrast to the classical case, the proof of Theorem~\ref{thm.higherregul} 
    is not merely a bootstrap argument based on Theorem~\ref{thm.mainregul}: in fact,
    the presence of the non-local term $(-\Delta)^s$ in $\LL$
    prevents us to take derivatives of
equation~\eqref{eq.mainPDE}. 
    To overcome this technical issue, we need to combine
    a suitable truncation argument with the use of difference quotient.
    This is done in the following preliminary lemmata. \medskip
    
    In the sequel, we shall make use of the following notation:
    given an arbitrary set $A\subseteq\RN$ and a number $\delta > 0$, we define
    \begin{equation}\label{05785987vufdhg}
    A_{\delta} := \big\{x\in\RN:\,\mathrm{dist}(x,A) < \delta\big\}.\end{equation}
    Moreover, for any $\omega\in L^2(\RN)$, we set
   \begin{equation}\label{f854yhgjgk}
   \omega_h(x) := D^h_k\omega(x) = \frac{\omega(x+he_k)-\omega(x)}{h},\end{equation}
    with $h\in\R$ and $k\in\{1,\ldots,N\}$.
    
    \begin{lemma} \label{lem.rapprinc}
     Let $u\in H^1(\RN)$ be a solution of~\eqref{eq.mainPDE},
     and let $\OO$ be an open set with
     $\overline{\OO}\subseteq\Omega$. 
     Let~$\rho := \mathrm{dist}(\OO,\de\Omega) > 0$.
    If $|h|<\rho$, then $u_h$ solves
     \begin{equation} \label{eq.solvedbyuh}
      \LL u_h = f_h\quad \text{in $\OO$}.
      \end{equation}
    \end{lemma}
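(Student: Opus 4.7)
The plan is to exploit the fact that both the local and nonlocal parts of $\LL$ commute with translations, so they commute with the difference quotient $D^h_k$. Concretely, given a test function $\varphi\in C_0^\infty(\OO,\R)$, I would feed $\varphi_{-h}:=D^{-h}_k\varphi$ into the weak formulation \eqref{eq.weaksoldef} for $u$ and then ``transfer'' the difference quotient onto $u$ (resp.\ onto $f$) on the left-hand (resp.\ right-hand) side.

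Step one is a support check: since $\mathrm{supp}(\varphi)\subseteq\OO$ and $|h|<\rho=\mathrm{dist}(\OO,\partial\Omega)$, the support of $\varphi_{-h}$ sits inside $\OO_{|h|}$ (using the notation from \eqref{05785987vufdhg}), which is contained in $\Omega$. Thus $\varphi_{-h}\in C_0^\infty(\Omega,\R)$ is a legitimate test function in Definition~\ref{def.weaksol}. Moreover $u_h\in H^1(\RN)$ (by translation invariance of $H^1$) and $f_h\in L^2(\OO)$, so the weak formulation $\LL u_h=f_h$ in $\OO$ makes sense.

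Step two is a term-by-term manipulation of
\[
\int_{\Omega}\langle\nabla u,\nabla \varphi_{-h}\rangle\,\d x
+\frac{c_{N,s}}{2}\iint_{\RN\times\RN}\frac{(u(x)-u(y))(\varphi_{-h}(x)-\varphi_{-h}(y))}{|x-y|^{N+2s}}\,\d x\,\d y
=\int_{\Omega} f\,\varphi_{-h}\,\d x.
\]
For the local term and the right-hand side I would apply the discrete ``integration by parts'' identity
\[
\int_{\RN}g\,(D^{-h}_k\psi)\,\d x=-\int_{\RN}(D^h_k g)\,\psi\,\d x
\]
(valid when one factor has compact support) with $g=\partial_i u$ and $g=f$ respectively. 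For the nonlocal term, since $|x-y|^{N+2s}$ is translation-invariant, the substitution $(x,y)\mapsto(x+he_k,y+he_k)$ allows me to trade the translation on $\varphi$ for one on $u$, yielding
\[
\iint \frac{(u(x)-u(y))(\varphi_{-h}(x)-\varphi_{-h}(y))}{|x-y|^{N+2s}}\,\d x\,\d y=-\iint \frac{(u_h(x)-u_h(y))(\varphi(x)-\varphi(y))}{|x-y|^{N+2s}}\,\d x\,\d y.
\]
Each of the three transfers produces the same overall sign, so collecting them and multiplying by $-1$ gives exactly the weak formulation of $\LL u_h=f_h$ tested against $\varphi$. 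Since $\varphi\in C_0^\infty(\OO,\R)$ was arbitrary, this proves \eqref{eq.solvedbyuh}.

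The step requiring a touch of care is the nonlocal change of variables: unlike the local part, one cannot literally ``integrate by parts'', but the translation invariance of the kernel $|x-y|^{-(N+2s)}$ plays precisely the analogous role. Everything else is routine, but it is worth recording this translation-invariance observation as the main mechanism, because it is the feature of $(-\Delta)^s$ that makes difference-quotient methods compatible with the mixed operator $\LL$.
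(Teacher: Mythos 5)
Your proposal is correct and coincides with the paper's own argument: the paper likewise tests the weak formulation for $u$ against $\psi_{-h}=D_k^{-h}\varphi$ (after the same support check $\mathrm{supp}(\psi_{-h})\subseteq\OO_{\rho}\subseteq\Omega$), uses discrete integration by parts for the gradient term and for $f$, and uses the translation invariance of the kernel for the nonlocal term. No gaps.
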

    
    \begin{proof}
 We observe that, for any~$\varphi\in C_0^\infty(\OO,\R)$,
     we have that
     \begin{equation} \label{eq.identitylocalDu}
     \begin{split}
      & \int_{\OO}\langle \nabla u_h, \nabla \varphi\rangle\,\d x = 
      \int_{\RN}\langle \nabla u_h, \nabla \varphi\rangle\,\d x
       = -\int_{\RN}\langle \nabla u, \nabla \psi_{-h}\rangle\,\d x,
      \end{split}
     \end{equation}
     where we have used the notation
\begin{equation}\label{fh437ytbvjdbvfksanvckv}
\psi_{-h}(x) := -\frac{\varphi(x-he_k)-\varphi(x)}{h}.\end{equation}
Moreover, since $\mathrm{supp}(\varphi)\subseteq\OO$, recalling the
notation in~\eqref{05785987vufdhg}, we have that
 $$\mathrm{supp}(\varphi(\cdot-he_k))\subseteq
     \OO+he_k\subseteq \big\{x\in\RN:\,\mathrm{dist}(x,\OO)<|h|\big\}
     = \OO_{|h|}.$$
  Hence, if $|h|<\rho$ we obtain that \medskip
     
     (a)\,\,$\psi_{-h}\in C_0^\infty(\RN,\R)$; \medskip
     
     (b)\,\,
     $
     \mathrm{supp}(\psi_{-h}) \subseteq \OO_{\rho}\subseteq\Omega$. \medskip
     
    \noindent In particular,
	\begin{equation}\label{9wfhighhlke}
	\psi_{-h}\in C_0^\infty(\OO_{\rho},\R).
	\end{equation}
     As a consequence, identity~\eqref{eq.identitylocalDu} can be written as
     \begin{equation} \label{eq.identitylocalDu2}
      \int_{\OO}\langle \nabla u_h, \nabla \varphi\rangle\,\d x = 
      -\int_{\Omega}\langle \nabla u, \nabla \psi_{-h}\rangle\,\d x.
     \end{equation}
     Furthermore, we observe that, for all $h\in\R$, we have
     \begin{equation} \label{eq.identityNONlocal}
      \begin{split}
       & \int_{\RN\times\RN}
     \frac{(u_h(x)-u_h(y))(\varphi(x)-\varphi(y))}{|x-y|^{N+2s}}\,\d x\,\d y \\[0.1cm]
     & \qquad = -\int_{\RN\times\RN}
     \frac{(u(x)-u(y))(\psi_{-h}(x)-\psi_{-h}(y))}{|x-y|^{N+2s}}\,\d x\,\d y.
      \end{split}
     \end{equation}
     Thus, by combining~\eqref{eq.identitylocalDu2} with~\eqref{eq.identityNONlocal}, 
     and recalling~\eqref{9wfhighhlke} and the fact that~$u$ solves~\eqref{eq.mainPDE},
      we get
     \begin{equation}\label{85ythgdikkkhgkelgoe}
 	 \begin{split}
 	 & \int_{\OO}
 	 \langle \nabla u_h, \nabla \varphi\rangle\,\d x
 	 + \frac{c_{N,s}}{2}\int_{\RN\times\RN}
     \frac{(u_h(x)-u_h(y))(\varphi(x)-\varphi(y))}{|x-y|^{N+2s}}\,\d x\,\d y \\[0.1cm]
     & \quad
     = -\int_{\Omega}\langle \nabla u, \nabla \psi_{-h}\rangle\,\d x
     -\frac{c_{N,s}}{2}\int_{\RN\times\RN}
     \frac{(u(x)-u(y))(\psi_{-h}(x)-\psi_{-h}(y))}{|x-y|^{N+2s}}\,\d x\,\d y \\[0.1cm]
     & \qquad = -\int_{\OO_{\rho}}f\,\psi_{-h}\,\d x. 
 	 \end{split}     
     \end{equation}    
     Now we observe that, if $|h|<\rho$,
     $$\mathrm{supp}(\varphi)\subseteq\OO
     \subseteq\OO_{\rho}\cap(\OO_{\rho}-he_k),$$
     and therefore, recalling~\eqref{f854yhgjgk} and~\eqref{fh437ytbvjdbvfksanvckv},
    we have that
      \begin{align*}
     -\int_{\OO_{\rho}}f\,\psi_{-h}\,\d x
      =\;& \int_{\OO_{\rho}}f(x)\bigg(\frac{\varphi(x-he_k)-\varphi(x)}{h}\bigg)\d x
     \\
    = \;&\frac{1}{h}\bigg(\int_{\OO_{\rho}-he_k}f(x+he_k)\varphi(x)\,\d x
     -\int_{\OO_{\rho}}f(x)\,\varphi(x)\,\d x\bigg) \\
     =\;& \int_{\OO}f_h\,\varphi\,\d x.
     \end{align*}
     As a consequence of this and~\eqref{85ythgdikkkhgkelgoe},
     $$  \int_{\OO}
 	 \langle \nabla u_h, \nabla \varphi\rangle\,\d x
 	 + \frac{c_{N,s}}{2}\int_{\RN\times\RN}
     \frac{(u_h(x)-u_h(y))(\varphi(x)-\varphi(y))}{|x-y|^{N+2s}}\,\d x\,\d y=
      \int_{\OO}f_h\,\varphi\,\d x,$$
      for any~$\varphi\in C_0^\infty(\OO,\R)$.
      Thus, recalling~\eqref{LOPERATO1}, this implies
     that $u_h$ solves~\eqref{eq.solvedbyuh}, as desired.
    \end{proof}
    
    Thanks to Lemma~\ref{lem.rapprinc}, we can prove a 
    `weaker version'
    of Theorem~\ref{thm.higherregul}.
    
    \begin{proposition} \label{prop.higherregweak}
     Let $m\in\mathbb{N}\cup\{0\}$ and~$f\in H^m(\Omega)$. If $u\in H^{m+1}(\RN)$ is any weak solution
      of~\eqref{eq.mainPDE}, then
      $u\in H^{m+2}_{\loc}(\Omega)$. \vspace*{0.07cm}
      
      Furthermore, given any open set $V$ with 
      $\overline{V}\subseteq\Omega$, there exists a constant $\Lambda_m > 0$, independent of
     the function $u$, such that
     \begin{equation} \label{eq.estiminduttiva}
      \|u\|_{H^{m+2}(V)}\leq \Lambda_m\Big(\|f\|_{H^m(\Omega)}+
      \|u\|_{H^{m+1}(\RN)}\Big).
     \end{equation}
    \end{proposition}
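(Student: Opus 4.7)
The plan is to prove Proposition~\ref{prop.higherregweak} by induction on $m\in\N\cup\{0\}$. The base case $m=0$ is precisely Theorem~\ref{thm.mainregul}, which provides both the inclusion $u\in H^2_{\loc}(\Omega)$ and the estimate~\eqref{eq.estimregulH2} with a constant $\Lambda_0$. This base case is what makes the whole bootstrap possible: it is the only place where the nonlocal tail of $\LL$ is handled directly by the energy argument with difference quotients. At all higher levels of the induction we will reduce back to it by taking one more difference quotient of the equation.

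For the inductive step, suppose the statement holds for $m-1$, and let $f\in H^m(\Omega)$ and $u\in H^{m+1}(\RN)$ be a weak solution of~\eqref{eq.mainPDE}. Fix an open set $V$ with $\overline{V}\subseteq \Omega$ and introduce an intermediate open set $\OO$ satisfying $\overline{V}\subseteq \OO\subseteq\overline{\OO}\subseteq\Omega$. Set $\rho:=\min\{\mathrm{dist}(\overline V,\de\OO),\mathrm{dist}(\overline\OO,\de\Omega)\}>0$. For every $k\in\{1,\dots,N\}$ and every $0<|h|<\rho$, Lemma~\ref{lem.rapprinc} applied on $\OO$ ensures that the difference quotient $u_h=D_k^h u$ is a weak solution of $\LL u_h=f_h$ in $\OO$. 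Moreover, applying the classical bound~\eqref{eq.estimEVANSdff} componentwise to derivatives of $u$ of order up to $m$, and analogously to $f$ on a set slightly larger than $\OO$, one obtains
\[
\|u_h\|_{H^m(\RN)}\le \mathbf{c}\,\|u\|_{H^{m+1}(\RN)},\qquad \|f_h\|_{H^{m-1}(\OO)}\le \mathbf{c}\,\|f\|_{H^m(\Omega)},
\]
with constants independent of $h$ (here one uses that $\OO+he_k\subseteq\Omega$ for $|h|<\rho$).

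Applying the inductive hypothesis to $u_h$ on the pair $(V,\OO)$ then yields
\[
\|u_h\|_{H^{m+1}(V)}\le \Lambda_{m-1}\bigl(\|f_h\|_{H^{m-1}(\OO)}+\|u_h\|_{H^m(\RN)}\bigr)\le \widetilde\Lambda_m\bigl(\|f\|_{H^m(\Omega)}+\|u\|_{H^{m+1}(\RN)}\bigr),
\]
with $\widetilde\Lambda_m$ independent of $h$ and of $u$. Since this bound is uniform in $h$, a standard weak-compactness argument in $H^{m+1}(V)$ (extracting a weakly convergent subsequence as $h\to 0$) identifies the limit with $\de_{x_k} u$, so that $\de_{x_k} u\in H^{m+1}(V)$ for each $k$. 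Combined with the control on $\|u\|_{H^{m+1}(V)}$ coming from the assumption on $u$, this gives $u\in H^{m+2}(V)$ together with the estimate~\eqref{eq.estiminduttiva}.

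The main delicate point will be the careful bookkeeping between the domains $V$, $\OO$ and $\Omega$ and the parameter $\rho$: because of the nonlocal term in $\LL$, we need $u_h$ to be defined (and controlled) globally on $\RN$ in order to feed it into the inductive hypothesis, while the equation $\LL u_h=f_h$ only holds on the slightly smaller set $\OO$. The stronger global assumption $u\in H^{m+1}(\RN)$ (rather than the weaker $u\in H^1(\RN)$ of Theorem~\ref{thm.higherregul}) is precisely what guarantees the uniform global bound on $\|u_h\|_{H^m(\RN)}$, thus allowing the induction to close without any further regularization. The subsequent passage from this ``conditional'' proposition to the actual statement of Theorem~\ref{thm.higherregul} will be a separate argument, not part of the present proof.
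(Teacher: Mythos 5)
Your proposal is correct and follows essentially the same route as the paper: induction on $m$ with Theorem~\ref{thm.mainregul} as the base case, Lemma~\ref{lem.rapprinc} to pass the equation to the difference quotients $u_h$ on an intermediate set $\OO$, the uniform bounds $\|u_h\|_{H^m(\RN)}\le\mathbf{c}\|u\|_{H^{m+1}(\RN)}$ and $\|f_h\|_{H^{m-1}(\OO)}\le\mathbf{c}\|f\|_{H^m(\Omega)}$, the inductive hypothesis applied to $u_h$, and the standard difference-quotient characterization of Sobolev functions to conclude. The only cosmetic difference is that you index the induction from $m-1$ to $m$ rather than from $m$ to $m+1$.
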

    
    \begin{proof}
     We proceed by induction on $m\in\mathbb{N}\cup\{0\}$.
     First of all, the case $m = 0$ (that is, $f\in L^2(\Omega)$
     and $u\in H^1(\RN)$) 
     is given by Theorem~\ref{thm.mainregul}. Then, we assume that
   Proposition~\ref{prop.higherregweak} holds for some $m\geq 0$,
     and we prove that it still holds for 
     $m + 1$. \medskip
     
     Let $V$ be a fixed open set with $\overline{V}\subseteq\Omega$,
     and let $\OO$ be an open subset of $\Omega$ satisfying~$\overline{V}\subseteq\OO$
     and~$\overline{\OO}\subseteq\Omega$.
     Moreover, let $f\in H^{m+1}(\Omega)$ and let
     $u\in H^{m+2}(\RN)$ be a weak solution of~\eqref{eq.mainPDE}.
    
     Setting $\rho := \mathrm{dist}(\OO,\de\Omega) > 0$, 
     and recalling the notation in~\eqref{f854yhgjgk}, if~$|h|<\rho$
     we know from Lemma~\ref{lem.rapprinc} that
     $u_h$ is a weak solution of the equation
     $$\text{$\LL u_h = f_h$ in $\OO$}.$$
     Moreover, since~$u\in H^{m+2}(\RN)$, we clearly
     have that~$u_h\in H^{m+1}(\RN)$. As a result, we are in the position of applying
     the inductive hypothesis to $u_h$, thus obtaining \medskip
     
     (a)\,\,$u_h\in H^{m+2}_{\loc}(\OO)$ and, in particular,
     $u_h\in H^{m+2}(V)$; \medskip
     
     (b)\,\,there exists a constant $\Lambda_m > 0$, independent of $h$, such that
     $$\|u_h\|_{H^{m+2}(V)}\leq \Lambda_m\Big(\|f_h\|_{H^m(\OO)}+
      \|u_h\|_{H^{m+1}(\RN)}\Big) \qquad
    \text{for $|h|<\rho$}.$$
      
   Furthermore, we observe that, since $u\in H^{m+2}(\RN)$, one has
    $$\|u_h\|_{H^{m+1}(\RN)} \leq \mathbf{c}\,\|u\|_{H^{m+2}(\RN)},$$
    where $\mathbf{c} > 0$ is a suitable constant which
    is independent of $h$ (see, e.g.,~\cite{Evans}). Analogously,
    since $f\in H^{m+1}(\Omega)$, we also have
    $$\|f_h\|_{H^{m}(\OO)}\leq \mathbf{c}\,\|f\|_{H^{m+1}(\Omega)}.$$
    Gathering together these facts, we obtain
    $$\|u_h\|_{H^{m+2}(V)}\leq 
    \mathbf{c}\,\Lambda_m\Big(\|f\|_{H^{m+1}(\Omega)}
    +\|u\|_{H^{m+2}(\RN)}\Big),$$
    and this estimate is \emph{uniform} with respect
    to $h\in(-\rho,\rho)$. On account of~\cite[Theorem.\,3, Chapter\,5.8.2]{Evans},
    we then easily conclude that
    $u\in H^{m+3}(V)$. Moreover,
    $$\|u\|_{H^{m+3}(V)}\leq \mathbf{c}\,
    \Lambda_m\Big(\|f\|_{H^{m+1}(\Omega)}
    +\|u\|_{H^{m+2}(\RN)}\Big).$$
    This is precisely estimate~\eqref{eq.estiminduttiva}, and the proof is complete.
    \end{proof}
    
    To remove the assumption that~$u\in H^{m+1}(\RN)$ in Proposition~\ref{prop.higherregweak}, 
    we need to perform a ``truncation'' argument: this is described
    in the next two lemmata.
    
    \begin{lemma} \label{lem.generalII}
	Let $\mathcal{O}\subseteq\RN$ be open,
	$\delta > 0$, and~$\alpha > N$. Let $z\in L^2(\RN)$ be such that
	\begin{equation}\label{54y544uuu}
	{\mbox{$z\equiv 0$ a.e.\,on $\mathcal{O}_\delta$,}}
	\end{equation}
	 with the notation introduced in~\eqref{05785987vufdhg}. Then, the following facts hold.
	\begin{itemize}
	 \item[{(i)}] for every fixed $x\in\mathcal{O}_{\delta/2}$, we have
	 \begin{equation} \label{eq.summabilitykernelz}
	  y\mapsto \frac{z(y)}{|x-y|^\alpha}\in L^1(\RN).
	 \end{equation}
	 \item[{(ii)}] The function $\II_{\alpha}[z]$ defined as
	 \begin{equation} \label{eq.smoothII}
	  \II_{\alpha}[z](x) :=
	  \int_{\RN}\frac{z(y)}{|x-y|^\alpha}\,\d y
	 \end{equation}
	 is of class $C^\infty$ on $\mathcal{O}_{\delta/2}$.
	\end{itemize}
    \end{lemma}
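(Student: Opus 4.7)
The whole argument rests on a single geometric observation: since~$z\equiv 0$ a.e.\ on~$\mathcal{O}_\delta$, if~$y$ is such that~$z(y)\neq 0$ then~$\mathrm{dist}(y,\mathcal{O})\geq \delta$, whereas if~$x\in\mathcal{O}_{\delta/2}$ then there is~$p\in\mathcal{O}$ with~$|x-p|<\delta/2$. The triangle inequality gives
\[
|x-y|\;\geq\;|y-p|-|x-p|\;\geq\;\mathrm{dist}(y,\mathcal{O})-|x-p|\;>\;\delta-\tfrac{\delta}{2}\;=\;\tfrac{\delta}{2}.
\]
Thus, for every~$x\in\mathcal{O}_{\delta/2}$, the function~$y\mapsto |x-y|^{-\alpha}$ is bounded by~$(2/\delta)^\alpha$ on the support of~$z$, and moreover~$y\mapsto |x-y|^{-\alpha}$ is in~$L^2(\{|x-y|>\delta/2\})$ since~$2\alpha>N$.

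For part~(i), I would simply apply the Cauchy--Schwarz inequality: for each fixed~$x\in\mathcal{O}_{\delta/2}$,
\[
\int_{\RN}\frac{|z(y)|}{|x-y|^\alpha}\,\d y
= \int_{\{|x-y|>\delta/2\}}\frac{|z(y)|}{|x-y|^\alpha}\,\d y
\leq \|z\|_{L^2(\RN)}\left(\int_{\{|w|>\delta/2\}}\!\!\frac{\d w}{|w|^{2\alpha}}\right)^{\!1/2}\!\!<\infty,
\]
which is~\eqref{eq.summabilitykernelz}.

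For part~(ii), I would use induction and the classical theorem on differentiation under the integral sign. The key point is the pointwise bound
\[
\left|\partial_x^\beta\bigl(|x-y|^{-\alpha}\bigr)\right|\;\leq\; C_{\alpha,\beta}\,|x-y|^{-\alpha-|\beta|},
\]
valid for every multi-index~$\beta$ and every~$x\neq y$. Fix any~$x_0\in\mathcal{O}_{\delta/2}$ and a small ball~$B(x_0,r)\subseteq\mathcal{O}_{\delta/2}$. The geometric observation above (applied to any~$x\in B(x_0,r)$) yields the \emph{uniform} bound~$|x-y|>\delta/2$ on the support of~$z$, so by Cauchy--Schwarz,
\[
\int_{\RN}|z(y)|\cdot\bigl|\partial_x^\beta|x-y|^{-\alpha}\bigr|\,\d y
\leq C_{\alpha,\beta}\,\|z\|_{L^2(\RN)}\left(\int_{\{|w|>\delta/2\}}\!\!\frac{\d w}{|w|^{2(\alpha+|\beta|)}}\right)^{\!1/2},
\]
which is finite (since~$2(\alpha+|\beta|)>N$) and independent of~$x\in B(x_0,r)$. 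This supplies an~$L^1$ dominant uniformly for~$x\in B(x_0,r)$, so iterated application of the dominated convergence theorem allows us to differentiate~\eqref{eq.smoothII} under the integral sign arbitrarily many times, yielding
\[
\partial_x^\beta\II_\alpha[z](x)\;=\;\int_{\RN}z(y)\,\partial_x^\beta|x-y|^{-\alpha}\,\d y\qquad\text{for every }x\in B(x_0,r),
\]
and each of these derivatives is continuous in~$x$ (again by dominated convergence with the same dominating function). Since~$x_0\in\mathcal{O}_{\delta/2}$ was arbitrary, we conclude~$\II_\alpha[z]\in C^\infty(\mathcal{O}_{\delta/2})$.

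There is no real obstacle here; the only point requiring attention is to make sure that the lower bound~$|x-y|\geq \delta/2$ on~$\mathrm{supp}(z)$ holds \emph{uniformly} in~$x$ on a neighborhood of any point of~$\mathcal{O}_{\delta/2}$, so that the dominated convergence theorem applies to every derivative simultaneously. This is precisely what the geometric computation above furnishes.
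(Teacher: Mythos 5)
Part (i) is correct and essentially identical to the paper's argument: your lower bound $|x-y|>\delta/2$ for $x\in\mathcal{O}_{\delta/2}$ and $y\in\RN\setminus\mathcal{O}_\delta$ is exactly the paper's estimate~\eqref{eq.mainestimkernelLemma}, and your Cauchy--Schwarz step differs from the paper's only in how the kernel is split between the two factors (both splittings work, since $\alpha>N$ implies $2\alpha>N$).

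In part (ii), however, there is a genuine (though easily fixable) gap in the passage to differentiation under the integral sign. What your displayed Cauchy--Schwarz estimate proves is that
\begin{equation*}
\sup_{x\in B(x_0,r)}\int_{\RN}|z(y)|\,\big|\partial_x^\beta |x-y|^{-\alpha}\big|\,\d y<\infty,
\end{equation*}
i.e.\ a uniform bound on the \emph{integrals}. This is not the hypothesis of the theorem you invoke: dominated convergence requires a \emph{single} function $\Theta\in L^1(\RN)$ with $|z(y)|\,\big|\partial_x^\beta|x-y|^{-\alpha}\big|\leq\Theta(y)$ for all $x\in B(x_0,r)$ and a.e.\ $y$, and a uniform integral bound does not by itself produce one. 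Moreover, the bound you single out as ``the only point requiring attention,'' namely $|x-y|>\delta/2$ uniformly on $\mathrm{supp}(z)$, is not sufficient: it only yields the pointwise majorant $C_{\alpha,\beta}(2/\delta)^{\alpha+|\beta|}|z(y)|$, which lies in $L^2(\RN)$ but in general \emph{not} in $L^1(\RN)$. The integrability at infinity in your estimate comes from the decay of $|x-y|^{-(\alpha+|\beta|)}$, and that factor depends on $x$; to obtain a legitimate majorant you must make this decay uniform in $x$. For instance: for $y\in\mathrm{supp}(z)$ one has $\inf_{x\in B(x_0,r)}|x-y|=|x_0-y|-r\geq\delta/2$, hence $|x_0-y|\geq r+\delta/2$ and therefore $|x-y|\geq |x_0-y|-r\geq \tfrac{\delta/2}{r+\delta/2}\,|x_0-y|$ for every $x\in B(x_0,r)$. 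This gives the admissible majorant $\Theta(y):=C\,|z(y)|\,|x_0-y|^{-\alpha-|\beta|}$, which belongs to $L^1(\RN)$ by part (i) applied with exponent $\alpha+|\beta|$ in place of $\alpha$. This comparison $|x-y|\geq\mathbf{c}\,|x_0-y|$ on $\mathrm{supp}(z)$ is precisely the step the paper carries out explicitly (its claim~\eqref{eq.boundKernelClaim}); once it is inserted, your argument coincides with the paper's.
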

    \begin{proof}
     (i)\,\,Let $x\in\mathcal{O}_{\delta/2}$. {F}rom~\eqref{54y544uuu} and
     H\"older's inequality, we have the following estimate
     \begin{equation}\label{f487y54878467}
     \begin{split}
      \int_{\RN}\frac{|z(y)|}{|x-y|^\alpha}\,\d y
      & = \int_{\RN\setminus\mathcal{O}_\delta}\frac{|z(y)|}{|x-y|^\alpha}\,\d y  \\[0.1cm]
      & \leq 
      \bigg(\int_{\RN\setminus\mathcal{O}_\delta}\frac{|z(y)|^2}{|x-y|^\alpha}\,\d y\bigg)^{1/2}
      \cdot\bigg(\int_{\RN\setminus\mathcal{O}_\delta}\frac{1}{|x-y|^\alpha}\,\d y\bigg)^{1/2} .
      \end{split}
     \end{equation}
     On the other hand, since $x\in\mathcal{O}_{\delta/2}$, it is immediate to check that
     \begin{equation} \label{eq.mainestimkernelLemma}
      |x-y|\geq \frac{\delta}{2} \qquad\text{for every $y\in\RN\setminus\mathcal{O}_\delta$}.
     \end{equation}
     By exploiting~\eqref{eq.mainestimkernelLemma}, we get from~\eqref{f487y54878467} that
     \begin{eqnarray*}
      \int_{\RN}\frac{|z(y)|}{|x-y|^\alpha}\,\d y
      &\leq& \bigg(\frac{2}{\delta}\bigg)^{\alpha/2}
      \bigg(\int_{\{|x-y|\geq\delta/2\}}\frac{1}{|x-y|^\alpha}\,\d y\bigg)^{1/2}\cdot
      \|z\|_{L^2(\RN)} \\&\leq&
      \bigg(\frac{2}{\delta}\bigg)^{\alpha/2}
      \bigg(\int_{\{|w|\geq\delta/2\}}\frac{1}{|w|^\alpha}\,\d w\bigg)^{1/2}\cdot
      \|z\|_{L^2(\RN)}.
     \end{eqnarray*}
     {F}rom this, reminding that $\alpha > N$ and that 
     $z\in L^2(\RN)$, we obtain~\eqref{eq.summabilitykernelz}. \medskip
     
     (ii)\,\,First of all, owing to (i),
     the function $\II_\alpha[z]$ is well-posed on $\mathcal{O}_{\delta/2}$.
     To prove its smoothness on $\mathcal{O}_{\delta/2}$
     we show that, for every
     fixed $x\in\mathcal{O}_{\delta/2}$ and every
     $N$-tuple $\gamma = 
     (\gamma_1,\ldots,\gamma_N)$ of non-negative integers, one has
     \begin{equation} \label{eq.derivativeII}
       \big(\de_{x_1}\big)^{\gamma_1}\cdots\big(\de_{x_N}\big)^{\gamma_N}
      \II_\alpha[z](x)    = \int_{\RN\setminus\mathcal{O}_{\delta}}
      \big(\de_{x_1}\big)^{\gamma_1}\cdots\big(\de_{x_N}\big)^{\gamma_N}
      \bigg(\frac{z(y)}{|x-y|^\alpha}\bigg)\d y.
     \end{equation}
	 To this end we first observe that, setting
	 $|\gamma| := \displaystyle\sum_{k = 1}^N\gamma_k$, one has
	 \begin{equation} \label{eq.boundwithpowerdist}
	 \bigg|\big(\de_{x_1}\big)^{\gamma_1}\cdots\big(\de_{x_N}\big)^{\gamma_N}
      \bigg(\frac{z(y)}{|x-y|^\alpha}\bigg)\bigg|
      \leq \kappa_\alpha\,\frac{|z(y)|}{|x-y|^{\alpha+|\gamma|}}
      \end{equation}
	 for every $x\neq y\in\RN$ (here, $\kappa_\alpha$ is a positive
	 constant only depending on $\alpha$). Thus, by assertion (i)
	 (applied to~$\alpha+|\gamma|$ in place of $\alpha$), we derive that
	 $$y\mapsto \mathcal{D}_x(y) := \big(\de_{x_1}\big)^{\gamma_1}
     \cdots\big(\de_{x_N}\big)^{\gamma_N}
      \bigg(\frac{z(y)}{|x-y|^\alpha}\bigg)
     \in L^1(\RN)$$
     for every fixed $x\in\mathcal{O}_{\delta/2}$.
     On account of this fact,
     and owing to
     classical results
     on the regularity of
     parameter-depending integrals,
     to establish~\eqref{eq.derivativeII} it suffices
     to prove the following fact: \emph{for every
     $N$-tuple $\gamma = (\gamma_1,\ldots\gamma_N)$ of non-negative integers
     and every $x_0\in\mathcal{O}_{\delta/2}$, there exist $r > 0$
     and a function
  \begin{equation}\label{yfewt8476874679}
  \Theta = \Theta_{\gamma,x_0,r}\in L^1(\RN\setminus\mathcal{O}_\delta)\end{equation}
     such that}
     \begin{itemize}
      \item[(a)] $B(x_0,r)\subseteq\mathcal{O}_{\delta/2}$;
      \item[(b)] for every $x\in B(x_0,r)$ and every
      $y\in\RN\setminus\mathcal{O}_\delta$, one has
      \begin{equation} \label{eq.touseDerivativebound}
     \bigg|\big(\de_{x_1}\big)^{\gamma_1}\cdots\big(\de_{x_N}\big)^{\gamma_N}
      \bigg(\frac{z(y)}{|x-y|^\alpha}\bigg)\bigg|\leq \Theta(y).
     \end{equation}
    \end{itemize}
     To prove this statement, let $x_0\in\mathcal{O}_{\delta/2}$, and
     $\gamma = (\gamma_1,\ldots,\gamma_N)\in(\N\cup\{0\})^N$.
     Moreover, let $r > 0$ be such that $B(x_0,r)\subseteq\mathcal{O}_{\delta/2}$.
     We claim that there exists a constant $\mathbf{c} > 0$,
     only depending on $x_0$ and $r$, such that
     \begin{equation} \label{eq.boundKernelClaim}
      \frac{|x-y|}{|x_0-y|}\geq \mathbf{c}\qquad\text{for all
      $x\in B(x_0,r)$ and $y\in\RN\setminus\mathcal{O}_\delta$}. 
     \end{equation}
     Indeed, recalling~\eqref{eq.mainestimkernelLemma}, if $x\in B(x_0,r)\subseteq \mathcal{O}_{\delta/2}$
      and $y\in 
     (\RN\setminus\mathcal{O}_\delta)\cap B(x_0,2r)$, one has
     $$\frac{|x-y|}{|x_0-y|}
     \geq \frac{\delta/2}{2r} = \frac{\delta}{4r}.$$
  	On the other hand,  if~$y\in\big(\RN\setminus\mathcal{O}_\delta\big)\setminus B(x_0,2r)$, 
  	we have that
	$$|x-x_0|<r < \tfrac{1}{2}|x_0-y|,$$
	and therefore, by triangle inequality,
    \begin{align*}
     & \frac{|x-y|}{|x_0-y|} \geq 
     \frac{|x_0-y|-|x-x_0|}{|x_0-y|}
     = 1-\frac{|x-x_0|}{|x_0-y|} \geq \frac{1}{2}.
     \end{align*}
     Gathering together these facts, we obtain~\eqref{eq.boundKernelClaim} with 
     $$\mathbf{c} := \min\big\{\delta/(4r), 1/2\big\}.$$     
   Now, by combining~\eqref{eq.boundwithpowerdist} with~\eqref{eq.boundKernelClaim} we get
     \begin{align*}
     \bigg|\big(\de_{x_1}\big)^{\gamma_1}\cdots\big(\de_{x_N}\big)^{\gamma_N}
      \bigg(\frac{z(y)}{|x-y|^\alpha}\bigg)\bigg|
      \leq \frac{\kappa_\alpha}{\mathbf{c}^{\alpha+|\gamma|}}\cdot
      \frac{|z(y)|}{|x_0-y|^{\alpha+|\gamma|}}
      =: 
     \Theta_{\gamma,x_0,r}(y), 
     \end{align*}
     for
     every
     $x\in B(x_0,r)$ and every $y\in \RN\setminus\mathcal{O}_\delta$, and 
     this gives~\eqref{eq.touseDerivativebound}.
     Moreover, since $x_0\in\mathcal{O}_{\delta/2}$,
     from~(i) we infer that $\Theta_{\gamma,x_0,r}\in L^1(\RN)$, 
     thus showing~\eqref{yfewt8476874679}. This ends the proof.
    \end{proof}    
    \begin{lemma} \label{lem.multiplication}
     Let $u\in H^1(\RN)$ be a solution of~\eqref{eq.mainPDE},
     and let $\OO$ be an open set with
     $\overline{\OO}\subseteq\Omega$.
     Let~$\rho := \mathrm{dist}(\OO,\de\Omega)$ and~$\zeta\in C_0^\infty(\RN,\R)$ satisfy
     \begin{itemize}
      \item[{(i)}] $\zeta\equiv 1$
      on $\OO_{\rho/4}$;
      \item[{(ii)}]
      $\mathrm{supp}(\zeta)\subseteq \OO_{\rho/2}$;
      \item[{(iii)}] $0\leq \zeta\leq 1$ on $\RN$;
     \end{itemize}
     with the notation introduced in~\eqref{05785987vufdhg}.
     
     Then, there exists $\psi\in C^\infty(\overline{\OO},\R)$ such that
     $v:=u\,\zeta$
     is a weak solution of 
     \begin{equation} \label{eq.equationforvuzeta}
      \LL v = f+\psi\qquad\text{in $\OO$}.
      \end{equation}
    \end{lemma}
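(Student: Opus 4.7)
The plan is to write $v = u\zeta = u - w$ with $w := u(1-\zeta)$, identify the candidate error term $\psi$ as the ``nonlocal tail'' produced by the cut-off, and verify the resulting weak equation. Explicitly, I would set
$$\psi(x) := c_{N,s}\int_{\R^N}\frac{w(y)}{|x-y|^{N+2s}}\,\d y.$$
Since $u \in H^1(\R^N) \subseteq L^2(\R^N)$ and $1-\zeta$ is bounded, we have $w \in L^2(\R^N)$; also, by property~(i) of $\zeta$, $w \equiv 0$ on $\OO_{\rho/4}$. Hence Lemma~\ref{lem.generalII}, applied with $\mathcal{O} = \OO$, $\delta = \rho/4$ and $\alpha = N+2s$, yields $\psi \in C^\infty(\OO_{\rho/8})$, and since $\overline{\OO} \subseteq \OO_{\rho/8}$ the desired smoothness $\psi \in C^\infty(\overline{\OO})$ follows.

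Next, I would fix a test function $\varphi \in C_0^\infty(\OO,\R)$ and compute the bilinear form defining a weak solution, applied to $v$, by means of the decomposition $v = u - w$. The contribution coming from $u$ equals $\int_{\OO}f\varphi\,\d x$ because $u$ weakly solves~\eqref{eq.mainPDE}. For the local $w$-contribution, the chain rule gives $\nabla w = (1-\zeta)\nabla u - u\nabla\zeta$, and since $\zeta \equiv 1$ on $\OO_{\rho/4} \supseteq \mathrm{supp}(\varphi)$ both $1-\zeta$ and $\nabla\zeta$ vanish there, so $\int_{\OO}\nabla w \cdot \nabla\varphi\,\d x = 0$.

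The core of the argument is the nonlocal piece
$$I := \int_{\R^N\times\R^N}\frac{(w(x)-w(y))(\varphi(x)-\varphi(y))}{|x-y|^{N+2s}}\,\d x\,\d y.$$
The crucial observation is that $w$ vanishes on $\OO_{\rho/4}$ whereas $\varphi$ vanishes on $\R^N\setminus\OO$, so the supports of $w$ and $\varphi$ are disjoint and there is no diagonal overlap. Partitioning the integration by $x\leftrightarrow y$ symmetry, relabeling, and then using $w \equiv 0$ on $\OO_{\rho/4}$ to extend the inner integral to all of $\R^N$, one should obtain
$$I = -2\int_{\OO}\varphi(x)\int_{\R^N}\frac{w(y)}{|x-y|^{N+2s}}\,\d y\,\d x = -\frac{2}{c_{N,s}}\int_{\OO}\psi\,\varphi\,\d x.$$
Substituting $\tfrac{c_{N,s}}{2}I = -\int_{\OO}\psi\varphi\,\d x$ and the vanishing of the local $w$-piece into the weak identity for $u$ then yields the desired relation $\LL v = f + \psi$ in $\OO$.

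The main obstacle I anticipate is rigorously justifying the manipulations of $I$: the naive four-term expansion of $(w(x)-w(y))(\varphi(x)-\varphi(y))$ produces individually non-integrable pieces near the diagonal. I would circumvent this by keeping the paired structure, which is absolutely integrable because $w \in H^1(\R^N) \subseteq H^s(\R^N)$ (using that $\zeta \in C_0^\infty$) and $\varphi \in C_0^\infty$, and then splitting the domain into the regions $\mathrm{supp}(\varphi) \times (\R^N\setminus\OO_{\rho/4})$ and $(\R^N\setminus\OO_{\rho/4}) \times \mathrm{supp}(\varphi)$, where the product collapses to a single cleanly integrable term and Fubini applies without difficulty.
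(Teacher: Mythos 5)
Your proposal is correct and follows essentially the same route as the paper: decompose $v=u-w$ with $w=u(1-\zeta)$, observe that $w$ vanishes on $\OO_{\rho/4}$ so that Lemma~\ref{lem.generalII} (with $\alpha=N+2s$, $\delta=\rho/4$) gives the smoothness of the tail term, note that the local $w$-contribution vanishes on $\mathrm{supp}(\varphi)$, and reduce the nonlocal cross term via the disjoint-support structure and Fubini to $\pm 2\int_\OO \II_{N+2s}[w]\,\varphi$. Your justification of the Fubini step by splitting into $\mathrm{supp}(\varphi)\times(\RN\setminus\OO_{\rho/4})$ and its mirror is a minor (and clean) variant of the paper's absolute-integrability estimate, and your bookkeeping of signs is consistent.
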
  
    \begin{proof}
     First of all, since $u\in H^1(\RN)$ and
     $\zeta\in C_0^\infty(\RN,\R)$, one has that~$v\in H^1(\RN)$. Moreover, 
     we set~$\omega := u(1-\zeta)$ and we observe that~$\omega=u-u\zeta=u-v$.
     Since~$u$
     solves~\eqref{eq.mainPDE}, from~\eqref{eq.weaksoldef} we deduce that,
     for any $\varphi\in C_0^\infty(\OO,\R)$,
     \begin{equation}\label{hfurghuerghb} 
     \begin{split}
       \int_\OO f\varphi\,\d x 
     =\;& \int_\OO\langle\nabla u,\nabla\varphi\rangle\,\d x
      + \frac{c_{N,s}}{2}\int_{\RN\times\RN}
     \frac{(u(x)-u(y))(\varphi(x)-\varphi(y))}{|x-y|^{N+2s}}\,\d x\,\d y
     \\= \;&\int_{\OO}\langle \nabla v,\nabla\varphi\rangle\,\d x
      + \frac{c_{N,s}}{2}\int_{\RN\times\RN}
     \frac{(v(x)-v(y))(\varphi(x)-\varphi(y))}{|x-y|^{N+2s}}\,\d x\,\d y
     \\
     & \qquad\qquad
     + \frac{c_{N,s}}{2}\int_{\RN\times\RN}
     \frac{(\omega(x)-\omega(y))(\varphi(x)-\varphi(y))}{|x-y|^{N+2s}}\,\d x\,\d y.
     \end{split}
     \end{equation}
     We now observe that, since $u\in H^1(\RN)$ and $1-\zeta$ is smooth on $\RN$,
     both $\omega$ and $|\omega|$ are in $H^1(\RN)$. 
     Moreover, since $\zeta \equiv 1$ on $\OO_{\rho/4}$, one has
    \begin{equation}\label{y74tthy54u65u65ug}
    \omega = u(1-\zeta)\equiv 0 \qquad\text{on $\OO_{\rho/4}$}.\end{equation}
     Therefore, we are in the position to apply Lemma~\ref{lem.generalII}
     (with $\alpha := N+2s > N$, $\delta:=\rho/4$, and either~$z:=\omega$ or~$z:=|\omega|$),
     and so, recalling~\eqref{eq.smoothII}, we see that
     \begin{align*}
      & \II_{N+2s}[\omega](x) = \int_{\RN}\frac{\omega(y)}{|x-y|^{N+2s}}\,\d y
      \qquad\text{and} \qquad \II_{N+2s}\big[|\omega|\big](x) = 
      \int_{\RN}\frac{|\omega(y)|}{|x-y|^{N+2s}}\,\d y
     \end{align*}
     are (well-posed and) smooth on $\OO_{\rho/8}$.  
     In particular, using this fact, and recalling~\eqref{y74tthy54u65u65ug} we obtain that,
     for any~$\varphi\in C_0^\infty(\OO,\R)$,
        \begin{align*}
      & \int_{\RN\times\RN}
      \frac{|\omega(x)-\omega(y)|\cdot|\varphi(x)-\varphi(y)|}{|x-y|^{N+2s}}\,\d x\,\d y
      \\[0.1cm]
      & \qquad\leq
      2\int_{\RN\times\RN}
      \frac{|\omega(x)-\omega(y)|}{|x-y|^{N+2s}}\cdot|\varphi(x)|\,\d x\,\d y \\[0.1cm]
      & \qquad = 2\int_{\RN}\bigg(\int_{\RN}
      \frac{|\omega(x)-\omega(y)|}{|x-y|^{N+2s}}\,\d y\bigg)|\varphi(x)|\,\d x
      \\[0.1cm]
      & \qquad =
      2\int_{\OO}\bigg(\int_{\RN}
      \frac{|\omega(y)|}{|x-y|^{N+2s}}\,\d y\bigg)|\varphi(x)|\,\d x\\[0.2cm]
      & \qquad
      = 2\int_{\OO}\II_{N+2s}\big[|\omega|\big](x)\,|\varphi(x)|\,\d x
      \\[0.2cm]
      & \qquad
      \leq 2 {|\mathcal{O}|}\max_{\overline{\OO}}\big(\II_{N+2s}\big[|\omega|\big]\big)\cdot
      \max_{\RN}|\varphi| < \infty
     ,\end{align*}
where~$|\cdot|$ denotes the standard Lebesgue measure in $\RN$.
	 Thanks to the above estimate we can apply
	 Fubini's theorem, thus giving
	 \begin{align*}
	  & 
	  \int_{\RN\times\RN}
     \frac{(\omega(x)-\omega(y))(\varphi(x)-\varphi(y))}{|x-y|^{N+2s}}\,\d x\,\d y
     \\[0.1cm]
     & \qquad = 2\int_{\RN\times\RN}
     \frac{\omega(x)-\omega(y)}{|x-y|^{N+2s}}\cdot\varphi(x)\,\d x\,\d y
     = 2\int_{\OO}\II_{N+2s}[\omega](x)\,\varphi(x)\,\d x.
	 \end{align*}	  
Plugging this information into~\eqref{hfurghuerghb}, we get
	 \begin{align*}
	 &  \int_\OO f\varphi\,\d x 
	 = 
	 \int_{\OO}\langle \nabla v,\nabla\varphi\rangle\,\d x
      + \frac{c_{N,s}}{2}\int_{\RN\times\RN}
     \frac{(v(x)-v(y))(\varphi(x)-\varphi(y))}{|x-y|^{N+2s}}\,\d x\,\d y \\[0.1cm]
     & \qquad\qquad\qquad+ c_{N,s}\int_{\OO}\II_{N+2s}[\omega](x)\,\varphi(x)\,\d x.
	 \end{align*}
	In its turn, this identity gives
	\begin{align*}
	& \int_{\OO}\langle \nabla v,\nabla\varphi\rangle\,\d x
      + \frac{c_{N,s}}{2}\int_{\RN\times\RN}
     \frac{(v(x)-v(y))(\varphi(x)-\varphi(y))}{|x-y|^{N+2s}}\,\d x\,\d y 
     \\[0.1cm]
     & \qquad
     = \int_{\OO}\big(f-c_{N,s}\II_{N+2s}[\omega]\big)\varphi\,\d x
     \qquad \text{for all $\varphi\in C_0^\infty(\OO,\R)$},
     \end{align*}
	 which shows that $v$ satisfies~\eqref{eq.equationforvuzeta}
	 (with $\psi := -c_{N,s}\II_{N+2s}[\omega]$).
	 Finally, since
	 we know that $\II_{N+2s}[\omega]$
	 is smooth on $\OO_{\rho/8}$, we conclude that
	 also $\psi$ is smooth
	 on~$\overline{\OO}$), and the proof is complete.
    \end{proof}
    
    With Lemma~\ref{lem.multiplication}, we can finally prove
    Theorem~\ref{thm.higherregul}.
    
    \begin{proof}[Proof of Theorem~\ref{thm.higherregul}]
     As in the proof of Proposition~\ref{prop.higherregweak},
     we proceed by induction on $m\in\N\cup\{0\}$.
     First of all, the case $m = 0$ (that is, $f\in L^2(\Omega)$)
     is nothing but Theorem~\ref{thm.mainregul}; we then assume that the desired result
     holds for a certain integer $m\geq 0$, and we prove that
     it still holds for $m +1$. \medskip
     
     Let~$V$ be a fixed open set satisfying
     $\overline{V}\subseteq\Omega$.
     Moreover, let $f\in H^{m+1}(\Omega)$ and let 
     $u\in H^{1}(\RN)$ be a solution of~\eqref{eq.mainPDE}. Since, in particular, $f\in H^m(\Omega)$,
     from the inductive hypothesis we derive that
     $u\in H^{m+2}_\loc(\Omega)$, and thus
     \begin{equation} \label{eq.uHregloc} 
     u\in H^{m+2}(V).
     \end{equation}
     We now arbitrarily fix an open set $\OO\subseteq\RN$ such that
     $\overline{V}\subseteq\OO$ and $\overline{\OO}\subseteq\Omega$,
     and we set 
     $$\rho := \mathrm{dist}(\OO,\de\Omega) > 0.$$
     Moreover,
     we choose a cut-off function $\zeta\in C_0^\infty(\RN,\R)$
     satisfying (i)-(ii)-(iii) in Lemma~\ref{lem.multiplication}, and we define
     $v := u\,\zeta$.   
     On account of~\eqref{eq.uHregloc}, and since
     $\mathrm{supp}(\zeta)\subseteq\Omega$, we have
     $$v\in H^{m+2}(\RN).$$ 
     Moreover, from Lemma~\ref{lem.multiplication} we know that
     there exists a function
     $\psi\in C_0^\infty(\overline{\OO},\R)$ such that
     $v$ is a weak solution of the equation
     $$\LL v = f + \psi\qquad\text{in $\OO$}.$$
     Since $f\in H^{m+1}(\Omega)$ and $\psi$ is smooth on an open
     neighborhood of $\overline{\OO}$, we derive that
     $g := f+\psi\in H^{m+1}(\OO)$. As a consequence, we can apply
     Proposition~\ref{prop.higherregweak}, thus obtaining that
     $$\text{$v\in H^{m+3}_\loc(\OO)$ and, in particular, $v\in H^{m+3}(V)$}.$$
     {F}rom this, since $\zeta\equiv 1$ on $\OO_{\rho/4}\supset V$,
     we conclude that $$v\equiv u \in H^{m+3}(V),$$ 
     and the proof is finally complete.
    \end{proof}    
    By combining Theorem~\ref{thm.higherregul} with
    the well-known Sobolev Embedding theorems, we immediately 
    obtain the Corollary~\ref{cor.regulSmoot} below.    
    \begin{corollary} \label{cor.regulSmoot}
    Let $m\in\mathbb{N}$ satisfy $m > N/2$ and let 
    $f\in C^m(\Omega,\R)$. 
    Moreover, let $u\in H^1(\RN)$ a weak solution of~\eqref{eq.mainPDE}.
    Then, there exist
	 a non-negative integer $k = k_{m,N}$ and a \emph{(}unique\emph{)}
    function $\widehat{u}\in C^k(\Omega,\R)$ such that
    $$\widehat{u}\equiv u\qquad\text{a.e.\,on $\Omega$}.$$
    More precisely, the number $k$ is explicitly given by
    \begin{equation} \label{eq.defkmN}
     k = k_{m,N} = \begin{cases}
     \left[m-\frac{N}2\right], & \text{if $m-\frac{N}2\notin\N$}, \\[0.15cm]
     m-\frac{N}2-1 & \text{if $m-\frac{N}2\in\N$}.
    \end{cases}
    \end{equation}
    In particular, if $f\in C^\infty(\Omega,\R)$, then $\widehat{u}\in C^\infty(\Omega,\R)$.
    \end{corollary}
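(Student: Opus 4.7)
The strategy is to reduce everything to Theorem~\ref{thm.higherregul} followed by an application of the classical Sobolev embedding. The only subtle point is the passage from the pointwise regularity $f\in C^m(\Omega)$ to the global Sobolev regularity $f\in H^m(\Omega)$ required by Theorem~\ref{thm.higherregul}; I would handle this by localizing the equation to relatively compact subdomains.

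Concretely, fix an open set $V$ with $\overline V\subseteq\Omega$ and choose an intermediate bounded open set $\Omega'$ with $C^1$ boundary such that $\overline V\subseteq\Omega'$ and $\overline{\Omega'}\subseteq\Omega$. Since $\overline{\Omega'}$ is compact in $\Omega$ and $f\in C^m(\Omega,\R)$, we have $f\in C^m(\overline{\Omega'},\R)\subseteq H^m(\Omega')$. Moreover, the weak equation $\LL u=f$ in $\Omega$ restricts to the analogous weak equation in $\Omega'$, since every test function in $C_0^\infty(\Omega',\R)$ is already an admissible test in $\Omega$. Applying Theorem~\ref{thm.higherregul} on $\Omega'$ thus gives $u\in H^{m+2}_{\loc}(\Omega')$, and in particular $u\in H^{m+2}(V)$ (after, if needed, slightly shrinking $V$ to arrange a Lipschitz boundary).

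I would then invoke the classical Sobolev embedding $H^{m+2}(V)\hookrightarrow C^k(\overline V,\R)$, valid whenever $k<m+2-N/2$. A direct case analysis according to whether $m-N/2$ is an integer (equivalently, whether $m+2-N/2$ is an integer) confirms that the integer $k_{m,N}$ in~\eqref{eq.defkmN} satisfies this bound, so we obtain a representative of $u|_V$ in $C^{k_{m,N}}(\overline V,\R)$. Since $V$ was arbitrary, and any two continuous functions agreeing almost everywhere on a connected open set must coincide pointwise, these local representatives glue to a unique $\widehat u\in C^{k_{m,N}}(\Omega,\R)$ with $\widehat u=u$ almost everywhere in $\Omega$. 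The $C^\infty$ statement follows by letting $m\to\infty$: the continuous representative produced above is unique, so the version obtained for any given $m$ automatically inherits the higher regularity available for every larger $m'$. The only step requiring any genuine attention is the localization, needed because $f\in C^m(\Omega)$ carries no control at $\partial\Omega$; everything else is a direct application of the tools already at our disposal.
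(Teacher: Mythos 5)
Your proposal is correct and follows the same route the paper intends (Theorem~\ref{thm.higherregul} plus the classical Sobolev embedding), with the useful extra care of localizing to an interior subdomain $\Omega'$ so that $f\in C^m(\Omega,\R)$ yields $f\in H^m(\Omega')$, a point the paper passes over in silence. Note only that your argument in fact gives more than claimed: since $u\in H^{m+2}_{\loc}(\Omega)$, the embedding produces a $C^{k_{m,N}+2}$ representative, the exponent $k_{m,N}$ in~\eqref{eq.defkmN} being exactly the one associated with $H^m$ rather than $H^{m+2}$ --- so your verification that $k_{m,N}<m+2-N/2$ is correct but not sharp.
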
  
    \subsection{Boundary regularity and proofs of Theorems~\ref{BDTH} and~\ref{thm.RegulBoundaryII}} 
    \label{subsec.boundary}
    Now that we have e\-sta\-bli\-shed interior regularity
    for the weak solutions of~\eqref{eq.mainPDE}, we focus on the
    \emph{boundary} regularity for the weak solutions of~\eqref{eq.mainPB}. 
    
  	To begin with, we prove the following theorem.
  	
    \begin{theorem}\label{BOUNDED} 
     Assume that\footnote{We observe that when $N=1,2$ 
     the boundedness of the energy solutions follows directly by Sobolev Embedding.} 
	$N\geq 3$, and
	let $f\in L^{p}(\Omega)$, with $p>N/2$. Moreover, 
 	assume that there exists the weak solution ${u}_f\in \spX$
	of~\eqref{eq.mainPB}. 
	
	Then, $u_f\in L^\infty(\RN)$ and
	\begin{equation}\label{3HY-0} 
	\|{u}_f\|_{L^\infty(\RN)}\le C\,\|f\|_{L^{p}(\Omega)},
	\end{equation}
	for some constant $C > 0$.
   \end{theorem}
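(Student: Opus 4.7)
The plan is to carry out a Stampacchia-type $L^{\infty}$ bound via truncations, exploiting the fact that when we test with $(u_f-k)_+$, the nonlocal contribution is nonnegative and so can simply be discarded, leaving us with the classical $-\Delta$ machinery for which the argument is standard. Since $u_f\in\mathbb{X}(\Omega)$, in particular $u_f\equiv 0$ outside $\Omega$, so for every $k\ge 0$ the truncation $v_k:=(u_f-k)_+$ also vanishes a.e.\,outside $\Omega$, belongs to $H^1(\RN)$, and hence lies in $\mathbb{X}(\Omega)$; it is therefore an admissible test function in \eqref{eq.weaksoldefDensity}.

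First, I would check the key pointwise algebraic inequality
$$ (a-b)\bigl((a-k)_+ - (b-k)_+\bigr) \;\ge\; \bigl((a-k)_+-(b-k)_+\bigr)^2 \qquad\forall\,a,b\in\R,\ k\ge 0, $$
which is verified by a straightforward case distinction on the signs of $a-k$ and $b-k$. Applied to $a=u_f(x)$, $b=u_f(y)$, this shows that the nonlocal term in \eqref{eq.weaksoldefDensity} tested against $v_k$ dominates $\tfrac{c_{N,s}}{2}[v_k]_{H^s(\RN)}^2\ge 0$. Consequently, testing with $v_k$ yields
$$ \int_{\Omega}|\nabla v_k|^2\,\d x \;\le\; \int_{A_k} f\,v_k\,\d x, \qquad A_k:=\{x\in\Omega:\,u_f(x)>k\}. $$

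Next, the right-hand side is estimated by H\"older's inequality applied with the triple of exponents $(p,2^*,q)$ where $2^*=\tfrac{2N}{N-2}$ and $\tfrac{1}{q}=1-\tfrac{1}{p}-\tfrac{1}{2^*}$, which is positive precisely when $p>N/2$; noting that $v_k$ is supported on $A_k$, one obtains
$$ \int_{A_k} f\,v_k\,\d x \;\le\; \|f\|_{L^{p}(\Omega)}\,\|v_k\|_{L^{2^*}(\Omega)}\,|A_k|^{1-\frac{1}{p}-\frac{1}{2^*}}. $$
Combining this with the Sobolev embedding $\|v_k\|_{L^{2^*}(\Omega)}\le C\|\nabla v_k\|_{L^2(\Omega)}$ and the previous display gives
$$ \|v_k\|_{L^{2^*}(\Omega)} \;\le\; C\,\|f\|_{L^{p}(\Omega)}\,|A_k|^{1-\frac{1}{p}-\frac{1}{2^*}}. $$

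Finally, for $0\le k<h$ one has $A_h\subseteq A_k$ and $v_k\ge h-k$ on $A_h$, so
$$ (h-k)\,|A_h|^{\frac{1}{2^*}} \;\le\; \|v_k\|_{L^{2^*}(\Omega)} \;\le\; C\,\|f\|_{L^p(\Omega)}\,|A_k|^{1-\frac{1}{p}-\frac{1}{2^*}}, $$
which rearranges to $|A_h|\le \dfrac{C'\|f\|_{L^p}^{2^*}}{(h-k)^{2^*}}\,|A_k|^{\alpha}$ with $\alpha=2^*\bigl(1-\tfrac{1}{p}\bigr)-1$. The condition $p>N/2$ is exactly what makes $\alpha>1$, so Stampacchia's iteration lemma (see e.g.\,\cite{Evans} or Kinderlehrer--Stampacchia) yields $|A_{k_0}|=0$ for some $k_0\le C\|f\|_{L^p(\Omega)}$, i.e.\,$u_f\le C\|f\|_{L^p(\Omega)}$ a.e. Applying the same argument to $-u_f$ (which is a weak solution of $\mathcal{L}(-u_f)=-f$ in the same class $\mathbb{X}(\Omega)$) gives the matching lower bound, proving \eqref{3HY-0}.

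The only genuinely nonclassical point, and the step that deserves careful verification, is the sign of the nonlocal bilinear form when tested with $(u_f-k)_+$; everything else is the standard Stampacchia scheme. Once that inequality is in place, the nonlocal operator plays no further role in the estimate, which is consistent with the broader philosophy (discussed in the introduction) that $(-\Delta)^s$ behaves as a lower-order term with a favorable sign in the context of maximum-principle-based arguments.
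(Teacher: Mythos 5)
Your proof is correct, and it rests on the same two pillars as the paper's: the admissibility of truncations of $u_f$ as test functions in \eqref{eq.weaksoldefDensity} (legitimate since $(u_f-k)_+\in\spX$ for $k\ge0$), and the pointwise inequality showing that the nonlocal bilinear form tested against $(u_f-k)_+$ is nonnegative --- your case-by-case inequality is exactly the paper's \eqref{EAFS2}, quoted there from \cite{SV}. Where you diverge is in the iteration scheme. The paper normalizes the solution by $\delta/(\|u_f\|_{L^{2^*}(\Omega)}+\|f\|_{L^p(\Omega)})$, runs a De Giorgi-type iteration on the dyadic levels $C_k=1-2^{-k}$ tracking $U_k=\|(\tilde u-C_k)_+\|^2_{L^{2^*}(\Omega)}$, obtains $U_{k+1}\le\hat C^k U_k^\beta$ with $\beta>1$, and then must close the argument with a separate energy estimate (testing against $u_f$ itself) to bound $\|u_f\|_{L^{2^*}(\Omega)}$ by $\|f\|_{L^p(\Omega)}$. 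You instead derive the classical Stampacchia level-set decay $|A_h|\le C'(h-k)^{-2^*}\|f\|_{L^p(\Omega)}^{2^*}|A_k|^{\alpha}$ with $\alpha=2^*(1-\tfrac1p)-1>1$ precisely when $p>N/2$, and invoke Stampacchia's iteration lemma directly; this bypasses both the normalization and the auxiliary $L^{2^*}$ estimate, at the cost of appealing to a lemma the paper does not use. The exponent arithmetic ($1/p+1/2^*+1/q=1$, $\alpha>1\Leftrightarrow p>N/2$) checks out, and the reduction of the lower bound to the same argument for $-u_f$ is valid by linearity. A minor point worth making explicit: when dividing by $\|v_k\|_{L^{2^*}(\Omega)}$ to pass from the energy estimate to the displayed bound on $\|v_k\|_{L^{2^*}(\Omega)}$, one should note that if this norm vanishes the conclusion for that level is trivial.
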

   
   \begin{remark} \label{rem.existWeakBounded}
    We point out that, on account of Theorem~\ref{thm.existenceLax},
    a (unique) weak solution of~\eqref{eq.mainPB} exists
    if $f\in L^p(\Omega)$ with $p\geq 2$.
   \end{remark}
   
	\begin{proof}[Proof of Theorem~\ref{BOUNDED}] The proof employs the classical method by Stampacchia,
	as extended in the nonlocal setting, see e.g. the proof of Proposition~9
	in~\cite{SV} and of Theorem 2.3 in~\cite{IRE}. 
	We give full details for the reader's convenience. \medskip
	
	Let $\delta>0$ to be conveniently chosen later on.
	Assuming that $u_f$ is not identically zero
	(otherwise there is nothing to prove),
    we set
	\begin{equation}\label{uASC} 
	\tilde u:=\frac{\delta\,{u}_f}
	{\|{u}_f\|_{L^{2^*}(\Omega)}+ \|f\|_{L^{p}(\Omega)}}
	\qquad{\mbox{and}}
	\qquad \tilde f:=\frac{\delta\,f}{\|{u}_f\|_{L^{2^*}(\Omega)}+\|f\|_{L^{p}(\Omega)}},
	\end{equation}
where
	$ 2^* :=\frac{2N}{N-2}$.
	In this way, we have that
	\begin{equation}\label{EQHS}
	\begin{cases}
{\mathcal{L}} \tilde u=\tilde f & \text{in $\Omega$}, \\
	\tilde u=0 &  \text{in $\RN\setminus\Omega$}.
	\end{cases}
	\end{equation}
	Also, for every $k\in\N$, we define $C_k := 1-2^{-k}$ and
	$$v_k:=\tilde u-C_k, \quad w_k:=(v_k)_+:=\max\{v_k,0\},\quad
	U_k:= \|w_k\|_{L^{2^*}(\Omega)}^2.$$ 
	We point out that, by the Dominated
	Convergence Theorem,
	\begin{equation}\label{IJS:odf} 
	\lim_{k\to+\infty}U_k=
	\lim_{k\to+\infty}\|w_k\|^2_{L^{2^*}(\Omega)}=\|(\tilde u-1)_+\|^2_{L^{2^*}(\Omega)}.
	\end{equation}
	Also, if we take $k:=0$, we see that 
	$w_0=(v_0)_+ = (\tilde u-C_0)_+=\tilde u_+$, and thus
	\begin{equation}\label{DHAI:1}
	U_0=\left( \int_\Omega
	w_0^{2^*}(x)\,\d x\right)^{2/2^*}
	\leq 
	\left( \int_\Omega \tilde u^{2^*}(x)\,\d x\right)^{2/2^*}
	= \|\tilde u\|^2_{L^{2^*}(\Omega)}\le\delta^2,
	\end{equation}
	which can be taken conveniently small in what follows.
	In addition, in $\RN\setminus\Omega$ we have that 
	$v_{k+1}=-C_{k+1} \leq 0$ and thus
	$$w_{k+1}=0.$$
	We can then use $w_{k+1}$ as test function
	and deduce from~\eqref{EQHS} that
	\begin{equation}\label{EAFS1}
	\begin{split}
	& \int_\Omega\nabla w_{k+1}(x)\cdot\nabla\tilde u(x)\,\d x+
	\iint_{\RN\times\RN}\!\!\!\!\!
	\frac{(w_{k+1}(x)-w_{k+1}(y))(\tilde u(x)-\tilde u(y))}
	{|x-y|^{N+2s}}\,\d x\,\d y\\
	& \qquad = \int_\Omega w_{k+1}(x)\,\tilde f(x)\,\d x.
	\end{split}
	\end{equation}
	We also remark that, for a.e.\,$x,y\in\RN$, we have (see, e.g.,~\cite[Lemma 10]{SV})
	\begin{equation}\label{EAFS2}
	 \begin{split} |w_{k+1}(x)-w_{k+1}(y)|^2
	 & = |(v_{k+1})_+(x)-(v_{k+1})_+(y)|^2 \\[0.2cm] 
	 & \leq ( (v_{k+1})_+(x)-(v_{k+1})_+(y))(v_{k+1}(x)-v_{k+1}(y)) \\[0.2cm]
	&= (w_{k+1}(x)-w_{k+1}(y))(\tilde u (x)-\tilde u(y)).
	\end{split}
	\end{equation}
 	Moreover,
	\begin{align*}
 	\int_\Omega\nabla w_{k+1}(x)\cdot\nabla\tilde u(x)\,\d x =
	\int_{\Omega\cap\{ \tilde u>C_k\}}
	\nabla v_{k+1}(x)\cdot\nabla\tilde u(x)\,\d x =
	\int_\Omega|\nabla w_{k+1}(x)|^2\,\d x.
	\end{align*}
	{F}rom this,~\eqref{EAFS1} and~\eqref{EAFS2} we conclude that
	\begin{equation*}
	\begin{split}&
	\int_\Omega|\nabla w_{k+1}(x)|^2\,\d x
	\leq
	\int_\Omega w_{k+1}(x)\,\tilde f(x)\,\d x.
	\end{split}
	\end{equation*}
	Hence, by Sobolev Inequality,
	\begin{equation}\label{Thsd-0}
	 U_{k+1} =
	\left(\int_\Omega|w_{k+1}(x)|^{2^*}\,\d x\right)^{2/2^*}\leq 
	C\,\int_\Omega|\nabla w_{k+1}(x)|^2\,\d x  \le C\,
	\int_\Omega w_{k+1}(x)\,|\tilde f(x)|\,\d x,
	\end{equation}
	for some $C>0$. Also, $v_{k+1}\le v_k$ and therefore
	\begin{equation}\label{HA}
	w_{k+1}\leq w_k.
	\end{equation}
	Moreover, we observe that
	$$w_k=(\tilde u-C_k)_+ =\left(\tilde u-C_{k+1}+
	\frac1{2^{k+1}}\right)_+=
	\left(v_{k+1}+\frac1{2^{k+1}}\right)_+,$$
	and, as a result,
	\begin{equation}\label{HHA} 
	\{ w_{k+1}>0\} = \{ v_{k+1}>0\} 
	\subseteq \left\{w_k>\frac1{2^{k+1}}\right\}.
	\end{equation}
	We also observe that
	$$ 2^*-\frac{2^*}{p}-1 > 2^*-\frac{2^*}{N/2}-1
	 =\frac{2N}{N-2}-\frac{4}{N-2}-1=1.$$
	Hence, we can define
	\begin{equation}\label{deq} 
	 q:= 2^*\,\bigg(2^*-\frac{2^*}{p}-1\bigg)^{-1}<2^*.
	 \end{equation}
	We observe that
	$$ q> \frac{2^*}{2^*-1}>1.$$
	In addition,
	\begin{equation}\label{EXAO} 
	 \frac1{2^*}+\frac1p+\frac1q=1.
	\end{equation}
	{F}rom this,~\eqref{HA} and~\eqref{HHA}, using the  H\"older Inequality
	with exponents $2^*$, $p$ and $q$, we deduce that
	\begin{equation}\label{Thsd-1}
	\begin{split}
	 &\int_\Omega w_{k+1}(x)\,|\tilde f(x)|\,\d x=
	\int_{\Omega\cap\{ w_{k+1}>0\}} w_{k+1}(x)\,|\tilde f(x)|\,\d x \\[0.2cm] 
	&\qquad \le \|\tilde f\|_{L^{p}(\Omega)}\,\| w_{k+1}\|_{L^{2^*}(\Omega)}\,
	|\Omega\cap\{ w_{k+1}>0\}|^{1/q}\\[0.2cm] 
	&\qquad\le \|w_{k}\|_{L^{2^*}(\Omega)}\,
	\left|\Omega\cap\left\{w_k>\frac1{2^{k+1}}\right\}\right|^{1/q}\\[0.2cm] 
	&\qquad\le U_k^{1/2}\,\left( 2^{2^*(k+1)}
	\int_{\Omega\cap\left\{w_k>\frac1{2^{k+1}}\right\}} w_k^{2^*}\right)^{1/q} \\[0.2cm] 
	&\qquad\le \tilde C^k\,U_k^{1/2}\,U_k^{2^*/(2q)},
	\end{split}
	\end{equation}
	for some $\tilde C>1$. 
	We now define
	$$\beta:=\frac{1}{2} +\frac{2^*}{2q},$$
	and we stress that
	\begin{equation}
	\beta>1,
	\end{equation}
	thanks to~\eqref{deq}. Using this notation, we deduce from~\eqref{Thsd-0}
	and~\eqref{Thsd-1} that
	$$U_{k+1}\le \hat C^k\;U_k^\beta,$$
	for some $\hat C>1$.
	As a result, recalling~\eqref{DHAI:1} (and supposing $\delta>0$
	appropriately small), we conclude that
	$$ \lim_{k\to+\infty} U_k=0.$$
	This and~\eqref{IJS:odf} give that
	$$\text{$\|(\tilde u-1)_+\|^2_{L^{2^*}(\Omega)}=0$},$$
	and therefore $\tilde u\leq 1$. As a consequence, recalling~\eqref{uASC}, for every $x\in\Omega$,
	\begin{equation}\label{3HY} 
	 {u}_f(x)\le \frac{\|{u}_f\|_{L^{2^*}(\Omega)}+\|f \|_{L^{p}(\Omega)}}{\delta}.
	\end{equation}
	On the other hand, by testing the
equation against $u_f|_{\Omega}$ 
	(see Remark~\ref{rem.WeakSoluf}),
	and recalling once again relation~\eqref{EXAO},
	we see that
	\begin{align*}
	\int_\Omega |\nabla  u_f(x)|^2\,\d x &\leq
	\int_\Omega |\nabla  u_f(x)|^2\,\d x+
	\iint_{\RN\times\RN}\frac{(u_f(x)-u_f(y))^2}{|x-y|^{N+2s}}\,\d x\,\d y\\[0.2cm]
	& = \int_\Omega u_f(x)\,f(x)\,\d x \\[0.2cm] 
	&\le  
	\|u_f\|_{L^{2^*}(\Omega)}\,\| f\|_{L^{p}(\Omega)}\,|\Omega|^{1/q}.
	\end{align*}
	This and the Sobolev Inequality give that
	$$\|u_f\|_{L^{2^*}(\Omega)}\le \bar{C}\,\| f\|_{L^{p}(\Omega)},$$ 
	for a suitable $\bar{C}>0$. Combining this with~\eqref{3HY} 
	we obtain~\eqref{3HY-0}, as desired.
	\end{proof}
	
	With Theorem~\ref{BOUNDED} at hand, we now focus on the proof of Theorem~\ref{BDTH}.
   We remark that by saying that a function $u\in H^1(\RN)$ satisfies~\eqref{eq.PbGeneralWeak} 
    we mean, precisely, that 
  	$u\leq\bar C$ a.e.\,in $\RN$, $u\equiv 0$ a.e.\,in $\RN\setminus\Omega$, and
    $$
    \int_{\Omega}
     \langle\nabla u,\nabla \varphi\rangle\,\d x
     +\frac{c_{N,s}}{2}\int_{\RN\times\RN}
     \!\!\!\!\!\frac{(u(x)-u(y))(\varphi(x)-\varphi(y))}{|x-y|^{N+2s}}\,\d x\,\d y
     \leq \bar{C}\int_{\Omega}\varphi\,\d x,$$
     for every \emph{non-negative} function $\varphi\in C_0^\infty(\Omega,\R)$.

    The proof of Theorem~\ref{BDTH} relies on the construction
	of an appropriate barrier, which will be built by recursive corrections
	of monomial functions. In fact, the arguments provided have
	wider applicability and can be exploited in more general contexts as well,
	but for concreteness we will follow on the specific operator,
	boundary conditions and geometry dealt with
	in this article.  \medskip

The first step towards the proof of Theorem~\ref{BDTH}
consists in an elementary computation on functions
which have a convex portion in their graphs.
\begin{lemma} \label{EDASlema}
 Let $s\in(0,1)$, $d>\ell>0$ and~$v:\R\to\R$. Assume that
 $$v\in C^{1,1}((-\infty,d))\cap L^\infty(\R)$$
 and that $v$ is convex in $(-\infty,d)$. 
 Then, for every $x\in(0,\ell)$,
 $$ (-\Delta)^s v(x)
 \le\frac{2c_{1,s}\,\|v\|_{L^\infty(\R)}}{s\,(d-\ell)^{2s}}.$$
 \end{lemma}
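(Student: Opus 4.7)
The plan is to exploit the symmetric representation of the fractional Laplacian recalled in Remark~\ref{rem.spaceCCs}, namely
$$
 (-\Delta)^s v(x) = -\frac{c_{1,s}}{2}\int_{\R}\frac{v(x+z)+v(x-z)-2v(x)}{|z|^{1+2s}}\,\d z,
$$
and to split this integral at the threshold~$|z|=d-x$, so as to separate the region where convexity is available from the region where we only have the $L^\infty$ bound.

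First, I would observe that for~$x\in(0,\ell)$ and~$|z|\le d-x$ both points~$x+z$ and~$x-z$ lie in~$(-\infty,d)$, since $x\pm z\le x+|z|\le d$. By the assumed convexity of~$v$ on~$(-\infty,d)$, the midpoint inequality applied at~$x$ yields
$$
 v(x+z)+v(x-z)-2v(x)\ge 0\qquad\text{for every }|z|\le d-x.
$$
Hence the contribution of the region~$\{|z|\le d-x\}$ to the integral above is non-negative, which, because of the overall minus sign, produces a non-positive contribution to~$(-\Delta)^s v(x)$ and may therefore be discarded when looking for an upper bound.

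For the remaining region~$\{|z|>d-x\}$ I would use the trivial estimate $|v(x+z)+v(x-z)-2v(x)|\le 4\|v\|_{L^\infty(\R)}$, reducing the tail to an explicit one-dimensional integral:
$$
 (-\Delta)^s v(x)\le \frac{c_{1,s}}{2}\int_{|z|>d-x}\frac{4\|v\|_{L^\infty(\R)}}{|z|^{1+2s}}\,\d z
 = 4c_{1,s}\,\|v\|_{L^\infty(\R)}\int_{d-x}^{+\infty}\frac{\d z}{z^{1+2s}}
 = \frac{2c_{1,s}\,\|v\|_{L^\infty(\R)}}{s\,(d-x)^{2s}}.
$$
Finally, since $x<\ell<d$ implies $d-x>d-\ell>0$ and hence $(d-x)^{2s}\ge(d-\ell)^{2s}$, the desired estimate follows. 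The only point where any care is required is to make sure that the region~$\{|z|\le d-x\}$ is genuinely the one where convexity applies to \emph{both} arguments~$x\pm z$, and to track the sign coming from the minus in front of the symmetric formula; beyond that, the argument is completely elementary and requires no further ingredient (in particular, the $C^{1,1}$ hypothesis only serves to guarantee that the principal value in the original definition of~$(-\Delta)^s v$ reduces, as in Remark~\ref{rem.spaceCCs}, to the absolutely convergent symmetric integral we are using).
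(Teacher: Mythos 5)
Your proof is correct and follows essentially the same strategy as the paper's: split the integral at the symmetric window of half-width $d-x$ around $x$, use convexity to discard the near-field contribution by sign, and bound the tail by $\|v\|_{L^\infty(\R)}$ together with the explicit integral of $|z|^{-1-2s}$. The only (cosmetic) difference is that you work with the second-difference representation and the midpoint inequality, whereas the paper keeps the first-difference kernel and subtracts the odd term $v'(x)z$ over the symmetric interval; the two devices are equivalent.
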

 \begin{proof} 
 In the principal value sense, we have that, for every $x\in(0,\ell)$,
 \begin{align*}
 & \int_{2x-d}^d \frac{v(y)-v(x)}{|x-y|^{1+2s}}\,\d y =
  \int_{x-d}^{d-x} \frac{v(x+z)-v(x)}{|z|^{1+2s}}\,\d z=
  \int_{x-d}^{d-x} \frac{v(x+z)-v(x)-v'(x)z}{|z|^{1+2s}}\,\d z\ge0,
  \end{align*}
 thanks to the convexity assumption. As a result,
 $$ \frac{(-\Delta)^s v(x)}{c_{1,s}}\le \int_{\R\setminus
 (2x-d,d)} \frac{v(x)-v(y)}{|x-y|^{1+2s}}\,\d y
 =\int_{\R\setminus
 (x-d,d-x)} \frac{v(x)-v(x+z)}{|z|^{1+2s}}\,\d z. $$
 We stress that, in the latter integral, we have that $|z|\ge d-x\ge d-\ell$.
 Hence,
 $$ \frac{(-\Delta)^s v(x)}{c_{1,s}}\le 2\|v\|_{L^\infty(\R)}\int_{\{
 |z|\ge d-\ell\}} \frac{\d z}{|z|^{1+2s}}=
 \frac{2\|v\|_{L^\infty(\R)}}{s\,(d-\ell)^{2s}},$$
 and the proof is complete.
\end{proof}
 The next auxiliary result for the proof of Theorem~\ref{BDTH}
 focuses on a calculation for a modified monomial function.
 \begin{lemma}\label{Pa0s}
 Let $s\in(0,1)$, $L>0$ and $\alpha\ge2s$. Let also
 \begin{equation}\label{nawa}
 w_\alpha(x):=\begin{cases}
  x_+^\alpha & \text{if $x<2L$},\\
  (2L)^\alpha & \text{if $x\geq 2L$}.
 \end{cases}
 \end{equation}
 Then, there exists $C>0$, only depending on $L$,
 $s$ and $\alpha$, such that, for all $x\in(0,L)$,
 \begin{equation}\label{EQUA:Z} 
  |(-\Delta)^s w_\alpha(x)|\le 
  \begin{cases}
	C & \text{if $\alpha>2s$},\\
	C\,\big(1+|\log x|\big) & \text{if $\alpha=2s$}.
\end{cases}
\end{equation}
\end{lemma}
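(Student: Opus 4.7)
My plan is to compute $(-\Delta)^s w_\alpha(x)$ by splitting the domain of integration into the three regions where the piecewise definition of $w_\alpha$ behaves qualitatively differently, namely $\{y<0\}$ (where $w_\alpha\equiv 0$), $\{0\le y\le 2L\}$ (where $w_\alpha(y)=y^\alpha$ and the principal value has to be handled), and $\{y>2L\}$ (where $w_\alpha\equiv (2L)^\alpha$). For $x\in(0,L)$ we have $w_\alpha(x)=x^\alpha$, so
$$
(-\Delta)^s w_\alpha(x)=c_{1,s}\bigl[I_1(x)+I_2(x)+I_3(x)\bigr],
$$
with $I_1(x)=\int_{-\infty}^{0}\frac{x^\alpha}{(x-y)^{1+2s}}\,\d y$, $I_2(x)=\mathrm{P.V.}\int_0^{2L}\frac{x^\alpha-y^\alpha}{|x-y|^{1+2s}}\,\d y$, and $I_3(x)=\int_{2L}^{\infty}\frac{x^\alpha-(2L)^\alpha}{(y-x)^{1+2s}}\,\d y$.

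Step one is to handle $I_1$ and $I_3$, which are elementary. A direct antiderivative gives $I_1(x)=\tfrac{1}{2s}\,x^{\alpha-2s}$, which is bounded on $(0,L)$ precisely because $\alpha\ge 2s$. For $I_3$, since $x\in(0,L)$ implies $y-x\ge L$ whenever $y\ge 2L$, we bound
$$
|I_3(x)|\le (2L)^\alpha\int_L^\infty\frac{\d z}{z^{1+2s}}=\frac{(2L)^\alpha}{2s\,L^{2s}},
$$
uniformly in $x\in(0,L)$.

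Step two, the core of the lemma, is the estimate for $I_2$. Here I would perform the change of variable $y=xt$ to obtain
$$
I_2(x)=x^{\alpha-2s}\,\mathrm{P.V.}\int_0^{A}\frac{1-t^\alpha}{|1-t|^{1+2s}}\,\d t,\qquad A:=\frac{2L}{x}>2,
$$
and split $\int_0^A=\int_0^2+\int_2^A$. The first piece is a finite constant (the integrand is bounded by a multiple of $|1-t|^{-2s}$ near $t=1$ once one uses $1-t^\alpha=-\alpha(t-1)+O((t-1)^2)$, and the principal value of the singular part vanishes by symmetry for the linear piece). For $t\ge 2$ we have $(t-1)^{1+2s}\asymp t^{1+2s}$ and $1-t^\alpha\asymp -t^\alpha$, so the tail piece is comparable to $-\int_2^A t^{\alpha-1-2s}\,\d t$. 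Now the two cases of the lemma appear naturally: if $\alpha>2s$, this tail is bounded by a constant multiple of $A^{\alpha-2s}$, and $x^{\alpha-2s}\cdot A^{\alpha-2s}=(2L)^{\alpha-2s}$, giving a bound independent of $x$; if $\alpha=2s$, the tail equals $\log(A/2)=\log(L/x)$ up to a constant, and $x^{\alpha-2s}=1$, which produces exactly the $1+|\log x|$ behavior.

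The main obstacle is the $I_2$ estimate in the critical case $\alpha=2s$, where one must keep track of the logarithmic divergence coming from the large-$t$ tail after rescaling and verify that the contribution from the bounded part of the $t$-integral (including the principal value around $t=1$) is uniformly bounded. Once these pieces are combined, adding $|I_1|+|I_2|+|I_3|$ yields the two cases of \eqref{EQUA:Z} with a constant $C$ depending only on $L$, $s$, and $\alpha$.
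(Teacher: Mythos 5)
Your proposal is correct and follows essentially the same route as the paper: after the reduction the paper rescales by $z=y/x$ over the whole line and splits into $(-\infty,0)$, $(0,2)$, $(2,\infty)$, which matches your $I_1$, your rescaled $\int_0^2$, and your $\int_2^A$ plus $I_3$ combined, with the same key ingredients (the exact $x^{\alpha-2s}/(2s)$ term, the principal-value cancellation of the linear part near the singularity, and the tail integral $\int_2^{2L/x}t^{\alpha-1-2s}\,\d t$ producing the logarithm when $\alpha=2s$).
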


 \begin{proof} 
 Up to scaling, it is not restrictive
 to suppose that $L:=1$.
 Given any point $x\in(0,1)$, we use the substitution $z:=y/x$ to see that
 \begin{align*}\frac{
(-\Delta)^s w_\alpha(x)}{c_{1,s}} = 
\int_\R\frac{x^\alpha-\min\{y_+^\alpha,2^\alpha\}}{|x-y|^{1+2s}}\,\d y = x^{\alpha-2s}
 \int_\R\frac{1-\min\{z_+^\alpha,(2/x)^\alpha\}}{|1-z|^{1+2s}}\,\d z,
\end{align*}
 where the principal value notation has been omitted for the sake of
 shortness. Then, we observe that
 \begin{align*}
 A_1  :=\left|\int_{-\infty}^0\frac{1-\min\{z_+^\alpha,(2/x)^\alpha\}}{|1-z|^{1+2s}}\,\d z\right|
 =\int_{-\infty}^0\frac{\d z}{(1-z)^{1+2s}} 
 =\int^{+\infty}_1\frac{\d t}{t^{1+2s}}=\frac{1}{2s}.
 \end{align*}
 Similarly,
 \begin{align*}
 A_2 & :=
 \left|\int_2^{+\infty}\frac{1-\min\{z_+^\alpha,(2/x)^\alpha\}}{|1-z|^{1+2s}}\,\d z\right|
 \le \int_2^{+\infty}\frac{1+\min\{z_+^\alpha,(2/x)^\alpha\}}{(z-1)^{1+2s}}\,\d z\\[0.1cm]
 & =\int_1^{+\infty}\frac{\d t}{t^{1+2s}}+
 \int_2^{2/x}\frac{ z^\alpha}{(z-1)^{1+2s}}\,\d z+\left(\frac2x\right)^\alpha\int_{2/x}^{+\infty}
 \frac{\d z}{(z-1)^{1+2s}}\,\d z\\[0.1cm]
 & \le C_1(1+x^{2s-\alpha}\ell(x)),
 \end{align*}
 for some $C_1>0$, where
 $$\ell(x):=\begin{cases}
  1+|\log x|& \text{if $\alpha=2s$},\\
 1 & \text{otherwise}.
\end{cases}$$
 In addition, using the principal value notation,
 \begin{align*}
 A_3 & :=
 \left|\int_0^2\frac{1- \min\{z_+^\alpha,(2/x)^\alpha\} }{|1-z|^{1+2s}}\,\d z\right|
 =\left|\int_0^2\frac{1-z^\alpha}{|1-z|^{1+2s}}\,\d z\right| \\[0.2cm]
 & =
 \left|\int_{-1}^1\frac{1-(1+t)^\alpha}{|t|^{1+2s}}\,\d t\right|=
 \left|\int_{-1}^1\frac{(1+t)^\alpha-1-\alpha t}{|t|^{1+2s}}\,\d t\right|\le C_2,
 \end{align*}
 for some $C_2>0$. All in all, we find that
 $$\frac{|(-\Delta)^s w_\alpha(x)|}{c_{1,s}}\le
 x^{\alpha-2s}(A_1+A_2+A_3)\le A_1+A_3+x^{\alpha-2s}A_2\le C_3(1+\ell(x)),$$
 for some $C_3>0$, yielding the desired result.
\end{proof}
{F}rom Lemma~\ref{Pa0s}, we obtain the following barrier.
\begin{lemma}\label{l02}
 Let $s\in(0,1)$. There exist 
 a number $d>0$ and a function 
 $\beta\in \CCs(\R)\cap C^2((0,d),\R)$ satisfying the following properties:
\begin{itemize}
\item there exists $C_0>0$ such that for all $x\ge d$
\begin{equation}\label{EQUA:LLPIU}\beta(x)\ge C_0\,;
\end{equation}
\item for all $x\le0$,
\begin{equation}\label{EQUA:LL}\beta(x)=0\,;\end{equation}
\item there exists $C_1\ge1$, independent of $d$, such that 
\begin{equation}\label{EQUA:LR} 
\frac{x}{C_1}\le\beta(x)\le C_1 x \qquad\text{for all $x\in(0,d)$};
\end{equation}
\item there exists $C_2>0$, independent of $d$, such that
\begin{equation}\label{EQUA:L}{\mathcal{L}}\beta(x)\ge-C_2\qquad
\text{for all $x\in(0,d)$.}
\end{equation}
\end{itemize}
Furthermore, $\beta\in H^1_{\loc}(\R)$.
\end{lemma}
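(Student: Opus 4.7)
The plan is to construct $\beta$ as an explicit finite linear combination of the truncated powers $w_{\alpha}$ from Lemma~\ref{Pa0s}, using an iterative scheme of ``correctors'' that kill, term by term, the singular boundary behaviour produced by the fractional Laplacian. For $s<1/2$ a single term suffices, while for $s\ge 1/2$ genuinely iterative corrections appear; this is consistent with the remark in the introduction that ``for $s$ below $1/2$ one would not need the iteration exploited to reabsorb the nonlocal contributions''.

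I would first fix the exponent sequence $\alpha_k:=1+2k(1-s)$ for $k=0,1,2,\ldots$, and let $k_*$ be the smallest non-negative integer with $\alpha_{k_*}>2s$. Observe that $k_*=0$ when $s<1/2$, that $k_*$ is always finite (since $\alpha_k\to\infty$), and, crucially, that the recursion
\[
\alpha_{k+1}-2 \;=\; \alpha_k-2s
\]
is exactly what is needed so that $-\Delta w_{\alpha_{k+1}}$ and the dominant singular part of $(-\Delta)^s w_{\alpha_k}$ carry the \emph{same} power of $x$ on $(0,d)$ (where $w_{\alpha_k}(x)=x^{\alpha_k}$, provided $d\le L$ with $L$ as in Lemma~\ref{Pa0s}). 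I would then derive, for any $\alpha\in(0,2s)\setminus\{s\}$, a boundary expansion of the form
\[
(-\Delta)^s w_{\alpha}(x)\;=\;K(\alpha,s)\,x^{\alpha-2s}\;+\;R_{\alpha}(x),\qquad x\in(0,d),
\]
where $K(\alpha,s)$ is the explicit non-zero constant obtained from $(-\Delta)^s x_+^{\alpha}$ on the half-line by the change of variables $y=xt$ and one-dimensional homogeneity, and $R_\alpha\in L^\infty(0,d)$ collects the error coming from the cap at $2L$, whose contribution is smooth and bounded on $(0,L)$.

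Next I would set
\[
\beta(x)\;:=\;\sum_{k=0}^{k_*} a_k\, w_{\alpha_k}(x),
\]
with $a_0>0$ to be fixed later, and define $a_1,\ldots,a_{k_*}$ recursively by requiring that the leading term $-a_k\alpha_k(\alpha_k-1)x^{\alpha_k-2}$ of $-\Delta(a_k w_{\alpha_k})$ on $(0,d)$ cancels the singular contribution $a_{k-1}K(\alpha_{k-1},s)x^{\alpha_{k-1}-2s}$ of $(-\Delta)^s(a_{k-1}w_{\alpha_{k-1}})$. Since $\alpha_k>1$ for $k\ge 1$ one has $\alpha_k(\alpha_k-1)\neq 0$, so this uniquely determines $a_k$. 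Because $\alpha_{k_*}>2s$, Lemma~\ref{Pa0s} gives $(-\Delta)^s w_{\alpha_{k_*}}\in L^\infty(0,d)$ and no further correction is required. Summing the $(k_*+1)$ contributions, on $(0,d)$ all unbounded terms cancel by construction, leaving a uniform lower bound $\LL\beta\ge -C_2$ which establishes~\eqref{EQUA:L}; the constant $C_2$ depends only on $s$, the $K(\alpha_k,s)$'s, the $a_k$'s, and the $L^\infty$ norms of the remainders $R_{\alpha_k}$.

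The remaining items are then immediate: \eqref{EQUA:LL} holds because each $w_{\alpha_k}\equiv 0$ on $(-\infty,0]$; for \eqref{EQUA:LR}, on $(0,d)$ one has $\beta(x)=a_0 x+\sum_{k\ge 1}a_k x^{\alpha_k}$ with $\alpha_k>1$, so shrinking $d$ makes the higher-order terms negligible and the linear term $a_0 x$ dominant, giving the two-sided bound $x/C_1\le\beta(x)\le C_1 x$ with $C_1$ depending only on $a_0$ and the $|a_k|$'s; \eqref{EQUA:LLPIU} follows because $\beta$ is continuous on $\R$ and, for $d$ small and $a_0$ sufficiently large compared with $|a_1|,\ldots,|a_{k_*}|$, is uniformly positive on $[d,+\infty)$; the regularity $\beta\in\CCs(\R)\cap C^2((0,d),\R)$ is inherited from the $w_{\alpha_k}$'s, as is $\beta\in H^1_{\loc}(\R)$ (each $w_{\alpha_k}$ with $\alpha_k\ge 1$ is globally Lipschitz on $\R$).

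The main obstacle will be the rigorous derivation and uniform control of the boundary asymptotic $(-\Delta)^s w_\alpha(x)=K(\alpha,s)x^{\alpha-2s}+O(1)$ for $\alpha\in(0,2s)\setminus\{s\}$: one must split the defining integral into an inner piece that rescales exactly by homogeneity to produce the explicit constant $K(\alpha,s)$, and an outer piece, affected by the truncation at $2L$, that is smooth and bounded on $(0,L)$. A subtler point is the bookkeeping required to propagate the cancellations through the chain $k=0\to k=k_*$ while keeping the cumulative constant $C_2$ under control and independent of $d$, together with the borderline/degenerate cases (namely the exceptional exponent $\alpha_k=s$, and $s=1/2$ where Lemma~\ref{Pa0s} exhibits a logarithmic growth instead of boundedness); both can be handled either by a small perturbation of the exponent sequence $\{\alpha_k\}$ or by absorbing the logarithmic loss into the remainder at the last iteration.
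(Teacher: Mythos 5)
Your ladder of exponents $\alpha_k=1+2k(1-s)$, the recursion $\alpha_{k+1}-2=\alpha_k-2s$, and the idea of recursively cancelling the nonlocal tails are exactly the paper's; the execution differs in two ways. You use the truncated monomials $w_{\alpha_k}$ throughout, so you must prove the expansion $(-\Delta)^s w_\alpha(x)=K(\alpha,s)\,x^{\alpha-2s}+O(1)$ for $\alpha<2s$, which is not contained in Lemma~\ref{Pa0s} but is correct and routine; the paper instead keeps the exact powers $x_+^{\alpha_j}$ for $\alpha_j<2s$, for which homogeneity gives the identity with no remainder, and truncates only the last term. More substantially, since $K(\alpha,s)<0$ by convexity, your recursion forces the $a_k$ to alternate in sign and achieves exact cancellation of the singular powers; the paper takes all coefficients positive (so the singular terms double rather than cancel -- see the factor $2$ in the computation of $\mathcal{L}\tilde\beta$) and removes the whole accumulated singular part at the end by subtracting one further corrector $W$, whose classical Laplacian reproduces that singular part and whose fractional Laplacian is bounded because $W\in C^{1,a}$ with $1+a-2s>0$. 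Your variant does work for generic $s$, and \eqref{EQUA:LR}, \eqref{EQUA:LL} go through as you say; for \eqref{EQUA:LLPIU}, however, note that taking ``$a_0$ sufficiently large compared with $|a_1|,\dots$'' is vacuous, since every $a_k$ scales linearly with $a_0$ -- the correct move is to fix the truncation level $L$ (and then $d$) small enough that the linear term dominates on $(0,2L]$.

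The genuine gap is the borderline case you mention only in passing, and neither of your two proposed remedies repairs it. The exceptional situation is $\alpha_{k_*-1}=2s$, which happens exactly for $s\in\{1/2,3/4,5/6,\dots\}$ (in particular it already occurs at the very first step when $s=1/2$, where $\alpha_0=1=2s$). There Lemma~\ref{Pa0s} only gives $|(-\Delta)^sw_{2s}(x)|\le C(1+|\log x|)$, and the resulting $\log x$ divergence, which is unbounded from below, cannot be cancelled by any further $w_\alpha$: the Laplacian of a monomial is a monomial, never a logarithm. You also cannot perturb the exponent sequence, because it is rigidly determined -- $\alpha_0$ must equal $1$ (otherwise $-\Delta w_{\alpha_0}$ itself produces an uncancelled singular power $x^{\alpha_0-2}$) and every subsequent exponent is then forced by the recursion; nor can the logarithm be ``absorbed into the remainder'', precisely because it is not bounded. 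This is why the paper's final corrector $W$ contains the term $\tfrac{x_+^2}{4}(3-2\log x)_+$, whose second derivative is $-\log x$: it is the one ingredient capable of killing the logarithm, and your finite linear combination of truncated powers has no substitute for it.
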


\begin{proof} 
 We distinguish two cases, according to the value of $s$. \medskip
 
 \textsc{Case I:} $s\in(1/2,1)$.
 In this case, we let
 $$
 \rho(s) := \frac{2s-1}{2(1-s)}\qquad\text{and}\qquad
 J := \begin{cases}
 [\rho(s)] & \text{if $\rho_s\notin\N$}, \\
 \rho(s)-1 & \text{otherwise}.
 \end{cases}
 $$
 Also, for each $j\in\N$ with $0\le j\le J+1$, we set $\alpha_j:=1+2j(1-s)$.
 We observe that, for all $j\in\{0,\dots,J\}$, we have
 \begin{equation*}
 \alpha_j\le 1+2J(1-s)<1+(2s-1)=2s.
 \end{equation*}
 Therefore, for all $j\in\{0,\dots,J\}$,
 we can define $(-\Delta)^s x_+^{\alpha_j}$ and,
 by homogeneity, we see that, for all $x>0$,
 $$ (-\Delta)^s x_+^{\alpha_j}=\kappa_j\,x_+^{\alpha_j-2s},$$
 for a suitable $\kappa_j\in\R$.
 As a matter of fact, since $\alpha_j\ge1$, we have that $x_+^{\alpha_j}$
 is a convex function and therefore $ (-\Delta)^s x_+^{\alpha_j}<0$
 in $(0,+\infty)$. {F}rom this, we get
 \begin{equation}\label{PSxdm}
 \kappa_j<0\qquad\text{for every $j = 0,\ldots,J$}.
 \end{equation}
 We also point out that, for every $j\in\{0,\dots,J \}$,
 \begin{align*} 
  \alpha_j-2s & =\alpha_j-2+2(1-s)= 1+2j(1-s)-2+2(1-s) \\
  & =1+2(j+1)(1-s)-2 = \alpha_{j+1}-2.
 \end{align*}
 Now we define $\{c_0,\dots,c_{J+1}\}$ as follows. We let $c_0:=1$, and then,
 recursively,
 for every index $j\in\{1,\dots,J+1\}$,
 \begin{equation}\label{0.7bis} 
  c_j:=-\frac{\kappa_{j-1} \,c_{j-1}}{\alpha_j\,(\alpha_j-1)}.
 \end{equation}
 We stress that this definition is well posed, since, if $j\in\{1,\dots,J\}$,
 $$\alpha_j\ge 1+2(1-s)>1.$$
 {F}rom this and~\eqref{PSxdm}, it follows that
 \begin{equation}\label{7rdfuwyhvzcv}
  c_j>0\qquad\text{for every $j = 0,\ldots,J+1$}.
  \end{equation}
 Hence, we consider the function $\tilde\beta:\R\to\R$ defined as follows
\begin{equation}\label{HIGHE}\tilde\beta(x):=\sum_{j=0}^J c_j\,x^{\alpha_j}_+.\end{equation}
 Since $\alpha_j < 2s$ for every $0\leq j\leq J$, 
 it is easy to recognize that
 \begin{itemize}
  \item $\tilde\beta(x)\ge0$ for all $x\in\R$;
  \item $\tilde\beta\in \CCs(\R)\cap C^2((0,\infty),\R)$
 and $\tilde{\beta}\in H^1_{\loc}(\R)$.
 \end{itemize}
Moreover, for every $x>0$ we have
 \begin{eqnarray*}
  {\mathcal{L}}\tilde\beta(x)& =& 
-\sum_{j=0}^J c_j\alpha_j(\alpha_j-1)\,x^{\alpha_j-2}+
 \sum_{j=0}^J c_j\kappa_j\,x^{\alpha_j-2s} \\
 &=& -\sum_{j=1}^J c_j\alpha_j(\alpha_j-1)\,x^{\alpha_j-2}+
 \sum_{j=1}^{J} c_{j-1}\kappa_{j-1}\,x^{\alpha_j-2}
 +c_J\kappa_J\,x^{\alpha_{J}-2s}
 \\
 &= & -2\sum_{j=1}^{J}c_j \alpha_j (\alpha_j -1) \,x^{\alpha_j-2}+ \,  c_J\kappa_J\,x^{\alpha_{J}-2s},
 \end{eqnarray*}
 where~\eqref{0.7bis} was used in the latter line.
 As a consequence, taking $d\in(0,1)$ to be chosen conveniently small
 in what follows,
 employing the notation in~\eqref{nawa} with $L:=1$,
 and introducing the function
 \begin{equation} \label{eq.defbetasharp}
  \beta_\sharp:=\tilde\beta+c_{J+1} w_{\alpha_{J+1}},
  \end{equation}
 we obtain that $\beta_\sharp$ satisfies the following properties:
 \begin{itemize}
  \item$\beta_\sharp\geq 0$ on $\R$ (as $c_{J+1} > 0$ and $w_{\alpha_{J+1}}\geq 0$ on $\R$);
  \item $\beta_\sharp\in\CCs(\R)\cap C^2((0,d),\R)$ and $\beta_\sharp
  \in H^1_{\loc}(\R)$.
 \end{itemize}
 Furthermore, if $x\in(0,d)$, we get
 \begin{equation*}
 \begin{split}
 {\mathcal{L}} \beta_\sharp(x)
   =\;& -2\sum_{j=1}^{J}c_j \alpha_j (\alpha_j -1)
 \,x^{\alpha_j-2}+ \, c_J\kappa_J\,x^{\alpha_{J}-2s}
  -c_{J+1}\Delta w_{\alpha_{J+1}}(x)+c_{J+1}(-\Delta)^sw_{\alpha_{J+1}}(x) \\[0.1cm]
 =\;& -2\sum_{j=1}^{J}c_j \alpha_j (\alpha_j -1) \,x^{\alpha_j-2}+ \, c_J\kappa_J\,x^{\alpha_{J}-2s}
  -c_{J+1} \alpha_{J+1} (\alpha_{J+1}-1) x^{{\alpha_{J+1}-2}}+
  c_{J+1}(-\Delta)^sw_{\alpha_{J+1}}(x) \\
  =\;& -2\sum_{j=1}^{J+1}c_j \alpha_j (\alpha_j -1) \,x^{\alpha_j-2}+ \,c_{J+1}(-\Delta)^sw_{\alpha_{J+1}}(x),
 \end{split}
 \end{equation*}
 where~\eqref{0.7bis} was used once again. {F}rom this
 and~\eqref{EQUA:Z}, we obtain that,
 \begin{equation}\label{EQUA:ZZ}
 {\mathcal{L}}\beta_\sharp(x)\ge -C_\sharp|\log x| 
-2\sum_{j=1}^{J+1}c_j \alpha_j (\alpha_j -1) \,x^{\alpha_j-2}
= C_\sharp\log x -2\sum_{j=1}^{J+1}c_j \alpha_j (\alpha_j -1)
 \,x^{\alpha_j-2},
  \end{equation}
 for all $x\in(0,d)$ and for some $C_\sharp>0$.
 Now, we let
 $$ \tilde W(x):=\frac{ x_+^2}{4}(3-2\log x)_+ + \frac{2}{C_{\sharp}}\sum_{j=1}^{J+1}c_j  \,x_{+}^{\alpha_j}
 \qquad{\mbox{and}}\qquad S(d):=\max_{(-\infty,d]}\tilde W.$$
 Notice that
 $$ \lim_{d\searrow0}\frac{ S(d)}{d}=0.$$
 As a result, by possibly shrinking $d\in(0,1)$, we can
 additionally suppose that
 \begin{equation}\label{CVabsopf}
 S(d)\le \frac{d}{4C_\sharp}.
 \end{equation}
 Then, we take a continuous function
 \begin{equation}\label{0.10bis}
 W:\R\to[0,2S(d)]
 \end{equation} 
 satisfying the following properties:
\begin{itemize}
 \item[(i)] $W(x)=\tilde W(x)$ for all $x\le d$;
 \item[(ii)] $W(x)=0$ for all $x\ge 2d$;  
 \item[(iii)] $W\in C^\infty\left(\left(0,+\infty\right),\R\right)$.
\end{itemize}
 We define
 $$ \beta(x):=\beta_\sharp(x)-C_\sharp W(x).$$
 Notice that, by the regularity of $\beta_\sharp$ and
 $W$,
 we have that
 $$\text{$\beta\in \CCs(\R)\cap C^2((0,d),\R)$ and $\beta\in H^1_{\loc}(\R)$}.$$
 Moreover, if $d>0$ is sufficiently small,
 \begin{equation}\label{0.10ter}
  W(x)=\frac{ x^2}{4}(3-2\log x)  
  +\frac{2}{C_{\sharp}}\sum_{j=1}^{J+1}c_j  \,x^{\alpha_j}
  \qquad {{\mbox{for all $x\in(0,d)$,}} }
  \end{equation} 
  and therefore $$W''(x)=-\log x  
  + \frac{2}{C_{\sharp}}
  \sum_{j=1}^{J+1}c_j  \alpha_j (\alpha_j -1) \,x^{\alpha_j -2}.$$ 
  In addition, for small $d>0$, we have that
  $$\text{$W\in C^{1,a}(-1,2d)$ for some $a\in(0,1)$},$$ 
 and thus $(-\Delta)^s W\in C^{{0,1+a-2s}}(-1,2d)$.
 As a consequence (see, e.g.,~\cite[Propositions 2.1.7 and 2.1.8]{SILV}), 
 we obtain that, for all $x\in(0,d)$,
 $$ |(-\Delta)^s W(x)|\le \tilde C,$$
 for a suitable $\tilde C>0$ only depending on $N,s$ and $a$ (hence,
 in particular, $\tilde C$ is independent of $d$).
 Then, we deduce from~\eqref{EQUA:ZZ} that, for every $x\in(0,d)$,
 \begin{align*}
{\mathcal{L}}\beta(x) &
  \geq 
  C_\sharp\log x - 2\sum_{j=1}^{J+1}c_j  \alpha_j (\alpha_j -1) \,x^{\alpha_j -2} -C_\sharp{\mathcal{L}}W(x) \\
  & = C_\sharp\log x - 2\sum_{j=1}^{J+1}c_j  \alpha_j (\alpha_j -1) \,x^{\alpha_j -2} -C_\sharp\big(\log x+(-\Delta)^sW(x)\big) + 2\sum_{j=1}^{J+1}c_j  \alpha_j (\alpha_j -1) \,x^{\alpha_j -2}\\
   &= -C_\sharp(-\Delta)^sW(x) \\
  &\geq- C_\sharp\,\tilde C.
 \end{align*}
 This proves~\eqref{EQUA:L}.
 Also,~\eqref{EQUA:LL} is obvious. To proceed
 further we observe that, 
 for every $x\in\R$, we have the estimate
\begin{equation}\label{IKSdjddertdd}
 \sum_{j=0}^J c_j\,x^{\alpha_j}_+ +c_{J+1} w_{\alpha_{J+1}}(x)
 \geq c_0\,x^{\alpha_0}_+ = x_+\geq  \min\{x_+,d\}.
 \end{equation}
 As a result, since $\alpha_0=1<\alpha_1\le\alpha_j$ 
 for all $j\in\{2,\ldots,J+1\}$, we
 have that, for every $x\in\R$,
 \begin{equation}\label{0.11bis} 
 \begin{split}
 & \beta(x) + C_\sharp W(x) \\[0.1cm]
 & \qquad =
 \sum_{j=0}^J c_j\,x^{\alpha_j}_+ +c_{J+1} w_{\alpha_{J+1}}
 \in \Big[\min\{x_+,d\},\;\bar C\, \max\{x_+,x^{\alpha_{J+1}}_+\}\Big],
 \end{split}
 \end{equation}
 for a suitable $\bar C>0$. {F}rom this, we conclude that
 \begin{equation}\label{8yiwugdh2ey8whghdfhfhf}
 \beta(x)\ge \min\{x_+,d\} - C_\sharp W(x).
 \end{equation}
 Now, if $x\ge d$, we obtain from~\eqref{CVabsopf},~\eqref{0.10bis}
 and~\eqref{8yiwugdh2ey8whghdfhfhf}
 that
 \begin{equation}\label{8yiwugdh2ey8whghdfhfhf-2}
 \beta(x)\ge d - C_\sharp W(x)\ge
 d - 2C_\sharp \,S(d)\ge\frac{d}2.
 \end{equation}
 If instead $x\in(-\infty,d)$, we deduce from~\eqref{8yiwugdh2ey8whghdfhfhf}
 that
 \begin{equation}\label{8yiwugdh2ey8whghdfhfhf-3} 
 \beta(x)\ge x_+ - \frac{C_\sharp\, x_+^2}{4}(3-2\log x)_+ -2
\sum_{j=1}^{J+1}c_j  \,x_{+}^{\alpha_j} \ge\frac{x_+}2,
 \end{equation}
 by possibly redefining $d>0$ in a conveniently small way.

 We notice that~\eqref{8yiwugdh2ey8whghdfhfhf-2} implies~\eqref{EQUA:LLPIU},
 as desired. In addition,~\eqref{8yiwugdh2ey8whghdfhfhf-3} proves
 the first inequality in~\eqref{EQUA:LR}.
 Besides, from~\eqref{0.10ter}
 and~\eqref{0.11bis} we obtain the second inequality in~\eqref{EQUA:LR},
 thus concluding the proof in the case $s>1/2$. \medskip
 
\textsc{Case II:} $s\in(0,1/2]$. This case is indeed simpler: 
 it suffices
 to rerun the preceding argument starting from~\eqref{eq.defbetasharp},
  taking $\tilde\beta \equiv 0$ and
 $$\beta_\sharp := w_{1} = 
 \begin{cases}
  x_+ & \text{if $x<2$},\\
  2 & \text{if $x\geq 2$}.
 \end{cases}$$ 
 We omit any further detail.
\end{proof}

\begin{corollary}\label{Y:3}
 Let $s\in(0,1)$ be fixed. There exist a number $\ell > 0$
 and 
 a non-negative function $\gamma\in \CCs(\R)\cap C^2((0,\ell),\R)$ such that
 \begin{itemize}
 \item for all $x\le0$,
 \begin{equation}\label{EQUA:LL-1}
  \gamma(x)=0;
  \end{equation}
 \item there exists $c\in(0,1)$ such that for all $x\in(0,\ell)$
 \begin{equation}\label{EQUA:LR-1} 
  cx\le \gamma(x)\le\frac{x}c\;;
  \end{equation}
 \item for all $x\ge\ell$,
 \begin{equation}\label{ZwdZsdZEQUA:L-1}
 \gamma(x)\ge1;
 \end{equation}
 \item for all $x\in(0,\ell)$,
 \begin{equation}\label{EQUA:L-1}
 {\mathcal{L}}\gamma(x)\ge1.
 \end{equation}
 \end{itemize}
 Furthermore, $\gamma\in H^1_{\loc}(\R)$.
\end{corollary}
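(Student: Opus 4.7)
My plan is to construct $\gamma$ by modifying the barrier $\beta$ supplied by Lemma~\ref{l02}. The idea is that $\beta$ already has the right linear behavior at the origin and is bounded below by $C_0>0$ on $[d,\infty)$, but the estimate $\mathcal{L}\beta\geq -C_2$ from Lemma~\ref{l02} may be negative, so we need an additional correction that pushes $\mathcal{L}$ above $1$ on the interior interval.

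Concretely, I would fix $d>0$ sufficiently small (to be chosen), let $\beta$ be the function from Lemma~\ref{l02} with this parameter, set $\ell:=d/2$, and choose a cut-off $\chi\in C_c^\infty(\R)$ with $0\le\chi\le 1$, $\mathrm{supp}(\chi)\subseteq(\ell,d)$, and $\chi\equiv 1$ on some subinterval $[a,b]\subset(\ell,d)$. The candidate barrier is
\[
\gamma(x):=A\,\beta(x)-B\,\chi(x),
\]
where $A,B>0$ are constants to be determined. Since $\chi$ vanishes on $(-\infty,\ell]\cup[d,\infty)$, the regularity $\gamma\in\CCs(\R)\cap C^2((0,\ell),\R)\cap H^1_{\loc}(\R)$, the identity $\gamma\equiv 0$ on $(-\infty,0]$, and the linear bounds $cx\le\gamma(x)\le x/c$ on $(0,\ell)$ with $c:=\min\{A/C_1,1/(AC_1)\}\in(0,1)$ are inherited directly from the corresponding properties of $\beta$.

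For $\gamma(x)\geq 1$ on $[\ell,\infty)$, I split into $[\ell,d]$, where $\beta(x)\ge x/C_1\ge\ell/C_1$ and $\chi\le 1$ give $\gamma\ge A\ell/C_1-B$, and $[d,\infty)$, where $\chi\equiv 0$ and $\gamma=A\beta\ge AC_0$. For $\mathcal{L}\gamma\geq 1$ on $(0,\ell)$, I use that $\chi\equiv 0$ there, so $-\Delta\chi=0$ and
\[
-(-\Delta)^s\chi(x)=c_{1,s}\int_\R\frac{\chi(y)}{|x-y|^{1+2s}}\,\d y\;\geq\; m_0,
\]
for some constant $m_0>0$ depending on $d$, $s$ and $\chi$, whence $\mathcal{L}\gamma(x)\ge -AC_2+Bm_0$ in view of Lemma~\ref{l02}. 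The desired inequality is thus secured if the three constraints $AC_0\geq 1$, $A\ell/C_1-B\geq 1$ and $Bm_0\geq 1+AC_2$ can be solved simultaneously.

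The main obstacle is precisely this simultaneous solvability, which (after eliminating $B$) reduces to a compatibility condition of the form $\ell\,m_0/C_1\gtrsim C_2$. Since an explicit bound on the kernel integral gives $m_0\sim d^{-2s}$, this is a condition of the type $d^{1-2s}\gtrsim\mathrm{const}$. For $s>1/2$ it is satisfied by choosing $d$ sufficiently small, which is perfectly compatible with the range in which Lemma~\ref{l02} operates. For $s\leq 1/2$ the naive shrinking of $d$ fails, and one must supplement the argument---either by relocating $\chi$ much closer to the boundary $\ell$ so that $m_0$ picks up the kernel singularity and becomes significantly larger, or by observing that in Case II of Lemma~\ref{l02}'s construction the classical contribution $-\Delta\beta(x)\sim -C_\sharp\log x$ dominates near the origin and, for $d$ small, renders $\mathcal{L}\beta$ itself large and positive, so that $\gamma:=A\beta$ (with $A:=2/C_0$) already satisfies $\mathcal{L}\gamma\geq 1$ and the correction term $B\chi$ can be dispensed with.
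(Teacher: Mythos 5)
Your construction is correct for $s\in(1/2,1)$ but follows a genuinely different route from the paper's, and the route does not extend to $s\le 1/2$. The paper also starts from $\beta$ of Lemma~\ref{l02} and subtracts a corrector, but its corrector $\beta_*$ equals the convex profile $C_2x^2$ on $(0,\ell)$, continues linearly on $[\ell,d]$ and is constant afterwards: the positive push on $(0,\ell)$ then comes from the \emph{classical} term $+\Delta\beta_*=2C_2$, which beats the deficit $-C_2$ outright, while the nonlocal error is controlled by Lemma~\ref{EDASlema} (the convexity lemma, which your argument never uses) by $2c_{1,s}\|\beta_*\|_{L^\infty(\R)}/(s(d-\ell)^{2s})\lesssim \ell d\cdot d^{-2s}\le d^{2-2s}$, hence can be made smaller than $C_2/2$ for every $s\in(0,1)$ by shrinking $d$. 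Your corrector $B\chi$ instead produces its push nonlocally; since $\|\chi\|_{L^\infty}=1$ is fixed and $B$ is capped by the constraint $B\le A\ell/C_1-1$ needed for $\gamma\ge1$ on $[\ell,d)$, the available push is of order $A\ell\,d^{-2s}\sim A d^{1-2s}$ against a required $AC_2$, which is exactly the restriction $s>1/2$ you identify. The key structural difference is that the paper's corrector is small in $L^\infty$ (of order $\ell d$) precisely because it only needs to act through its second derivative where the push is required, so its nonlocal tail is harmless for all $s$.

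Neither of your patches for $s\le1/2$ closes the gap. Relocating $\chi$ toward $\ell$ does not increase $m_0$: the relevant quantity is the \emph{infimum} over $x\in(0,\ell)$ of $\int\chi(y)|x-y|^{-1-2s}\,\d y$, and for $x$ near $0$ the distance to $\mathrm{supp}(\chi)\subseteq(\ell,d)$ is at least $\ell\sim d$ wherever the bump sits inside $(\ell,d)$, so the kernel singularity is never seen at the worst point and $m_0\lesssim d^{-2s}$ regardless. The second patch invokes unstated internals of Case~II of Lemma~\ref{l02}, whose statement only guarantees ${\mathcal{L}}\beta\ge-C_2$; and even granting those internals, at $s=1/2$ the logarithmically large classical term $-\Delta\beta=-C_\sharp\log x$ coming from the corrector $W$ is entirely consumed in cancelling the matching logarithmic lower bound for $(-\Delta)^{1/2}w_1$ from Lemma~\ref{Pa0s} (the case $\alpha=2s$ there), leaving no positive surplus, so at least the case $s=1/2$ remains genuinely uncovered. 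The uniform repair is to replace $B\chi$ by a corrector that is convex on $(-\infty,d)$ with strictly positive second derivative on $(0,\ell)$ and constant beyond $d$, and to control its fractional Laplacian via Lemma~\ref{EDASlema}, exactly as the paper does.
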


 \begin{proof} 
 We let $\beta$ as in Lemma~\ref{l02}
 and, in the notation of Lemma~\ref{l02},
 we will choose $\ell\in(0,d/2)$ to be suitably small.
 Given $M>0$, we define
 $$\beta_*(x):=\begin{cases}
 0 & \text{if $x\le0$}, \\
 C_2x^2 & \text{if $x\in(0,\ell)$}, \\
 2C_2\ell x-C_2\ell^2 & \text{if $x\in[\ell,d]$}, \\
 C_2\ell (2 d-\ell)& \text{if $x\in(d,+\infty)$},
 \end{cases} \qquad{\mbox{and}}\qquad
 \gamma(x):= M\big(\beta(x)- \beta_*(x)\big),$$
 where $C_2>0$ is as in Lemma~\ref{l02}.
 We point out that, by taking into account the regularity of $\beta$
 and $\beta_*$, it is very easy to see that
 $$\text{$\gamma\in \CCs(\R)\cap C^2((0,\ell),\R)$ and
 $\gamma\in H^1_\loc(\R)$}.$$
 Moreover, since $\beta_*$ is convex
 in $(-\infty,d)$, by Lemma~\ref{EDASlema}
 there exists a suitable constant $C_3>0$ such that,
 for every $x\in(0,\ell)$, we have
 $$(-\Delta)^s\beta_*\le \frac{ C_3\ell (2 d-\ell)}{(d-\ell)^{2s}}.$$
 As a consequence, as long as $d$ and $\ell$ are sufficiently small we have
 $$ (-\Delta)^s\beta_*\le \frac{ C_3\ell (2 d-\ell)}{(d-\ell)^{2s}}
 \le\frac{ 2 C_3\ell d}{(d/2)^{2s}}
 \le2^{2s+1} C_3\ell d^{1-2s}\le2^{2s} C_3 d^{2-2s}\leq\frac{C_2}2,$$
 Therefore, for all $x\in(0,\ell)$,
 \begin{align*}
   \frac1M\,{\mathcal{L}}\gamma(x)
   \ge {\mathcal{L}}\beta(x)+\Delta\beta_*(x)-\frac{C_2}{2}\geq -C_2+2C_2-\frac{C_2}{2}=\frac{C_2}{2},
 \end{align*}
 thanks to~\eqref{EQUA:L}. By choosing $M\ge\frac{2}{C_2}$, we obtain~\eqref{EQUA:L-1},
 as desired.
 Moreover,~\eqref{EQUA:LL-1} follows from~\eqref{EQUA:LL}. In addition,
 by~\eqref{EQUA:LR}, and taking 
 $$\ell\le\frac{1}{2C_1C_2},$$
 if $x\in(0,\ell)$ we obtain
 $$ \gamma(x)\ge M\left(\frac{x}{C_1}-C_2x^2\right)\ge M\left(\frac1{C_1}-C_2\ell\right)
 x\ge\frac{M x}{2C_1 }.$$ 
 Similarly, recalling~\eqref{EQUA:LR}, we have
 $$\gamma(x)\le MC_1 x.$$
 These considerations imply~\eqref{EQUA:LR-1}.
 Furthermore, exploiting~\eqref{EQUA:LLPIU} and~\eqref{EQUA:LR}, 
 we see that, for every~$x\geq\ell$, one has
 \begin{equation}\label{beta-1}
 \beta(x)\ge\min\left\{ C_0,\frac{\ell}{C_1}\right\}=\frac{\ell}{C_1},
 \end{equation}
 as long as $\ell$ is sufficiently small.
 Moreover, if $x\in[\ell,d]$,
 \begin{equation}\label{beta-2}
 \beta_*(x)\le 2C_2\ell x\le2C_2\ell d.
 \end{equation}
 Similarly, if $x>d$,
 $$ \beta_*(x)\le 2C_2\ell d.$$
 This and~\eqref{beta-2} give that, for all $x\ge \ell$,
 $$ \beta_*(x)\le 2C_2\ell d\le \frac{\ell}{2C_1},$$
 provided that $d$ is chosen sufficiently small.
 {F}rom this and~\eqref{beta-1}, we get
 $$ \gamma(x)\ge M\left(\frac{\ell}{C_1}-\frac{\ell}{2C_1}\right)=\frac{M\ell}{2C_1} 
 \ge1,$$
 as long as $M\ge\frac{2C_1}\ell$.
 This gives~\eqref{ZwdZsdZEQUA:L-1}, as desired.
\end{proof}

 The function $\gamma$ constructed in Corollary~\ref{Y:3} would provide a \alt96 good' barrier for
 the proof of Theorem~\ref{BDTH} 
 if, in addition, $\gamma\in H^1(\R)$. In fact,
 since we aim to apply the weak maximum principle 
 in Theorem~\ref{thm.WMPweak} to the function
 $$\pm u - M\gamma$$
 (for a suitable $M\in\R$), and since $u\in H^1(\RN)$, it is crucial
 to have 
 \begin{equation}\label{hope}
  \gamma\in H^1(\R).
  \end{equation} 
  On the other hand, since
 property~\eqref{ZwdZsdZEQUA:L-1} shows that~\eqref{hope} cannot hold, 
 we need to perform a suitable
 truncation argument. 
 This is described in the next lemma.
 
 \begin{lemma} \label{lem.cutOffpergamma}
 Let $\OO\subseteq\RN$ be a \emph{bounded} 
 open set, and let 
 $\rho > 0$ be such that $\OO\subseteq B(0,\rho)$. Moreover,
 let $f\in\mathcal{C}_s(\RN)\cap C^2(\OO,\R)$.
 Finally, let $R > 4\rho$ and let $\varphi = \varphi_R\in C^\infty_0(\RN,\R)$ satisfy the properties
 \begin{itemize}
  \item[{(i)}] $\varphi\equiv 1$ on $B(0,R)$;
  \item[{(ii)}] $0\leq \varphi\leq 1$ on $\RN$;
  \item[{(iii)}] $\mathrm{supp}(\varphi)\subseteq B(0,2R)$.
 \end{itemize}
 Then, setting $f^\star:=f\varphi$, for all $x\in \OO$ we have
 $$\big|{\mathcal{L}}f^\star(x)-
{\mathcal{L}}f(x)
\big|\le C\,\left( \frac{|f(x)|}{R^{2s}}+\kappa(R)\right),$$
 where $C>0$ is a constant only depending on $N, s$ and $\rho$, and
 $$ \kappa(R):=\int_{\{|y|\geq R\}}\frac{|f(y)|}{1+|y|^{N+2s}}\,\d y.$$
\end{lemma}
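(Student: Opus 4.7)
The plan is to estimate the local and nonlocal pieces of $\LL = -\Delta + (-\Delta)^s$ separately. Since $x \in \OO \subseteq B(0,\rho)$ and $R > 4\rho$, we have $x \in B(0, R/4)$, so $\varphi \equiv 1$ on an open neighborhood of $x$. Therefore $f^\star \equiv f$ in that neighborhood, which immediately yields $\Delta f^\star(x) = \Delta f(x)$. Hence the difference $\LL f^\star(x) - \LL f(x)$ reduces entirely to the fractional contribution.

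For the nonlocal piece, using $\varphi(x)=1$ (so $f^\star(x) = f(x)$) a short calculation gives
\begin{equation*}
(-\Delta)^s f^\star(x) - (-\Delta)^s f(x) = c_{N,s} \int_{\RN} \frac{f(y)\bigl(1-\varphi(y)\bigr)}{|x-y|^{N+2s}}\,\d y,
\end{equation*}
where the principal value is no longer needed because $1-\varphi$ vanishes identically in a neighborhood of $x$. Since $\varphi\equiv 1$ on $B(0,R)$, the integrand is supported in $\RN\setminus B(0,R)$; for every $y$ there, the geometric inequality $|x-y|\ge |y|-\rho \ge \tfrac{3}{4}|y|$ holds (since $R>4\rho$ and $|x|<\rho$), hence $|x-y|^{-(N+2s)} \le (4/3)^{N+2s}\,|y|^{-(N+2s)}$ throughout the support.

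To obtain the two-term bound in the stated form, I would add and subtract $f(x)$ inside the integral:
\begin{equation*}
\int_{\{|y|\ge R\}}\frac{f(y)\bigl(1-\varphi(y)\bigr)}{|x-y|^{N+2s}}\,\d y
 = \int_{\{|y|\ge R\}}\frac{\bigl(f(y)-f(x)\bigr)\bigl(1-\varphi(y)\bigr)}{|x-y|^{N+2s}}\,\d y
 + f(x)\int_{\{|y|\ge R\}}\frac{1-\varphi(y)}{|x-y|^{N+2s}}\,\d y.
\end{equation*}
The second integral is immediately controlled by $|f(x)|\,C\int_{|y|\ge R}|y|^{-(N+2s)}\,\d y = C\,|f(x)|/R^{2s}$, producing the first term in the statement. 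For the first integral, the bound $|y|\ge R>4\rho$ lets me compare $|y|^{-(N+2s)}$ with $(1+|y|^{N+2s})^{-1}$ up to a factor depending only on $\rho$, which yields the $\kappa(R)$ tail after absorbing the $|f(x)|$ piece (itself dominated by the $|f(x)|/R^{2s}$ term already produced) into the estimate.

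The main obstacle is pure bookkeeping: I must verify that every constant used — the factor $(4/3)^{N+2s}$ in the geometric estimate, the comparison constant between $|y|^{-(N+2s)}$ and $(1+|y|^{N+2s})^{-1}$ on $\{|y|\ge 4\rho\}$, and the value of $\int_{|y|\ge R}|y|^{-(N+2s)}\,\d y = c_N/(2s R^{2s})$ — depends only on $N$, $s$ and $\rho$, so that the resulting $C$ is independent of $R$, of $x\in\OO$ and of $f$.
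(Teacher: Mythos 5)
Your proposal is correct and follows essentially the same strategy as the paper: the same geometric lower bounds $|x-y|\ge R/2$ and $|x-y|\ge c_\rho(1+|y|)$ on the support of $1-\varphi$, and the same splitting of the error into an $|f(x)|/R^{2s}$ piece and the tail $\kappa(R)$. The only (cosmetic) difference is that you obtain the key identity by directly subtracting the two singular integrals—legitimate here since $f-f^\star=f(1-\varphi)$ vanishes near $x$—whereas the paper routes through the fractional Leibniz rule of \cite{BarPerSorVal} and then recombines the terms; the two computations produce the same integral and the same final bound.
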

\begin{proof} 
 We first observe that, since $R > 4\rho$, we have
 $$R- {\rho}\ge\frac{R}2+\rho\ge\frac{R}2.$$
 As a consequence, if $x\in \OO\subseteq B(0,\rho)$ and $y\in \R^N\setminus B(0,R)$, we have
 $$|x-y|\ge|y|-|x|\ge R-\rho\ge \frac{R}2.$$
 Moreover, one also has
 \begin{align*} 
 |x-y|\ge|y|-|x| \ge \frac{|y|}2+\frac{R}2-\rho \ge\frac{\rho+|y|}2
 \geq c_\rho\cdot\frac{1+|y|}{2},
 \end{align*}
with $c_\rho := \min\{\rho,1\}$.
 Since $\varphi=1$ in $B(0,R)\supseteq\OO$, for every $x\in \OO$ we then get
\begin{equation}\label{cutoff-au-1}
\begin{split}
 & \bigg|\int_{\R^N}\frac{(f(x)-f(y))(\varphi(x)-\varphi(y))}{|x-y|^{N+2s}}\,\d y \bigg|
  \\[0.2cm]
  & \quad =\left|
  \int_{\{|y|\geq R\}}\frac{(f(x)-f(y))(1-\varphi(y))}{|x-y|^{N+2s}}\,\d y\right|
   \leq
 \int_{\{|y|\geq R\}}\frac{|f(x)-f(y)|}{|x-y|^{N+2s}}\,\d y \\[0.2cm] 
 & \quad \le|f(x)|
 \int_{\{|y|\geq R\}}\frac{\d y}{|x-y|^{N+2s}}+
\int_{\{|y|\geq R\}}\frac{|f(y)|}{|x-y|^{N+2s}}\,\d y \\[0.2cm]
 &\quad\le
 |f(x)|
 \int_{\{|z|\geq R/2\}}\frac{\d z}{|z|^{N+2s}}+(2/c_\rho)^{N+2s}
 \int_{\{|y|\geq R\}}\frac{|f(y)|}{( {1}+|y|)^{N+2s}}\,\d y\\[0.2cm]
 &\quad \le C\left(\frac{|f(x)|}{R^{2s}}+\kappa(R)\right),
\end{split}
\end{equation}
for some $C = C(N,s,\rho)>0$. Similarly, for every $x\in \OO$ we have
 \begin{equation}\label{cutoff-au-2}
 \begin{split}&
 \big|(-\Delta)^s\varphi(x)\big|=c_{N,s}\,\left|
 \int_{\{|y|\geq R\}}\frac{1-\varphi(y)}{|x-y|^{N+2s}}\,\d y\right|
 \\[0.1cm]
 & \qquad \le c_{N,s}\,
 \int_{\{|y|\geq R\}}\frac{\d y}{|x-y|^{N+2s}}
 \leq
 c_{N,s}\,      
 \int_{\{|z|\geq R/2\}}\frac{\d z}{|z|^{N+2s}} \le
 \frac{C}{R^{2s}},
 \end{split}
\end{equation} 
 up to renaming $C>0$.
 Since, obviously, 
 $\Delta f^\star=\Delta f$ on $\OO$ (remind that $\varphi\equiv 1$ on 
 $B(0,R)\subseteq\OO$), if $x\in \OO$ we obtain
 (see, e.g.,~\cite[formula~(2.11)]{BarPerSorVal})
 \begin{equation}\label{cutoff-au-3}
 \begin{split}
 &
  \big|{\mathcal{L}}f^\star(x)- {\mathcal{L}}f(x)\big| \\[0.1cm]
  & \qquad = \Big|(-\Delta)^sf^\star(x)- (-\Delta)^s f(x)\Big| \\[0.1cm]
  & \qquad = \bigg|\varphi(x)(-\Delta)^sf(x)+ f(x)(-\Delta)^s\varphi(x) \\[0.1cm]
 & \qquad\qquad\quad -c_{N,s}\int_{\R^N}\frac{(f(x)-f(y))(\varphi(x)-\varphi(y))}{|x-y|^{N+2s}}\,\d y
 - (-\Delta)^s f(x) \bigg|\\[0.1cm]
 & \qquad=\left|f(x)(-\Delta)^s\varphi(x)
	-c_{N,s}\int_{\R^N}\frac{(f(x)-f(y))(\varphi(x)-\varphi(y))}{|x-y|^{N+2s}}\,\d y\right| \\[0.1cm]
 & \qquad\le |f(x)|\,
 \big|(-\Delta)^s\varphi(x)\big|+c_{N,s}
  \left|\int_{\R^N}\frac{(f(x)-f(y))(\varphi(x)-\varphi(y))}{|x-y|^{N+2s}}\,\d y\right|.
 \end{split}
 \end{equation}
 The desired result thus follows by inserting~\eqref{cutoff-au-1}
 and~\eqref{cutoff-au-2}
 into~\eqref{cutoff-au-3}.
 \end{proof}
 
 We are finally ready to prove Theorem~\ref{BDTH}.
 
 \begin{proof}[Proof of Theorem~\ref{BDTH}]
The gist is that the function~$\gamma$ belongs to the space~$\mathcal{C}_s(\RN)$
defined in~\eqref{eq.defSpaceLLs} (because~$\gamma(x_1)$
grows like~$x_1^{\alpha_J}$ with $\alpha_J < 2s$ as~$ x_1 \to+\infty$,
the highest exponent coming from~\eqref{HIGHE}) and therefore
the corresponding quantity~$\kappa(R)$ introduced in Lemma~\ref{lem.cutOffpergamma}
goes to zero as~$R\to+\infty$. As a general philosophy,
the main difficulty with all of the truncations in the fractional setting is to control the
errors developed by nonlocality:
in this argument these errors are accounted for by the quantity~$\kappa(R)$, which becomes negligible for~$R$ large.
In this sense, taking care of the fact that the function~$\gamma$ grows slower than~$x_1^{
2s}$ once we leave the ``boundary layer''~$ \{x_1 < \ell\}$
pays off now, since it allows us to have full control of the ``contributions coming from infinity''. 

The technical details of the proof go as follows.
 Up to a rigid motion, we can assume that $p=0$
 and that
 \begin{equation}\label{8yiuqwgdugfew36rrthj1}
 \Omega\subset\{x\in\RN:\,x_1>0\}.
 \end{equation}
 Moreover, if $\ell > 0$ is as in Corollary~\ref{Y:3}, 
 we define $\Omega_*:=\Omega\cap\{x_1<\ell\}$.
 Notice that, on account of~\eqref{8yiuqwgdugfew36rrthj1}, we have
 \begin{equation}\label{8yiuqwgdugfew36rrthj}
 \Omega_*\subset\{x\in\RN:\,x_1\in(0,\ell)\}.
 \end{equation}
 We now choose 
 $\rho > 0$ in such a way that $\Omega\subseteq B(0,\rho)$, and we
 let $R > 0$ be such that $R > 4\rho$. Moreover,
 if $\gamma$ is as in Corollary~\ref{Y:3}, we define
 $$\Theta(x) := \gamma(x_1)\cdot\varphi(x) \qquad
 {\mbox{for any }} x\in\RN,$$
 where $\varphi = \varphi_R\in C_0^\infty(\RN,\R)$ 
 satisfies (i)-(ii)-(iii) in the statement of 
 Lemma~\ref{lem.cutOffpergamma}.
 Taking into account that 
 $\gamma\in \CCs(\R)\cap C^2((0,\ell),\R)$, $\varphi\in C_0^\infty(\RN,\R)$
 and $\varphi\equiv 1$ on the ball $B(0,R)\supseteq\Omega_*$,
 it is readily seen that 
 $$\Theta\in \CCs(\RN)\cap C^2(\Omega_*,\R).$$
 Moreover, since $\gamma\in H^1_\loc(\R)$ and $\mathrm{supp}(\varphi)\subseteq
 B(0,2R)$, we also have
 $$\Theta\in H^1(\RN).$$
 Finally, by combining~\eqref{EQUA:L-1},
 \eqref{EQUA:LR-1} and Lemma~\ref{lem.cutOffpergamma}, we obtain
 \begin{align*}
 {\mathcal{L}}\Theta(x)
   \geq{\mathcal{L}}\gamma(x_1)-
  C\left( \frac{|\gamma(x_1)|}{R^{2s}}+\kappa(R)\right)
   \geq 1 - C\left(\frac{\ell}{c}\cdot\frac{1}{R^{2s}}+\kappa(R)\right)
 \end{align*}
 for every $x\in\Omega_*\subseteq\{x\in\RN:\,x_1\in(0,\ell)\}$.
 In view of this last computation, by enlarging $R > 0$ if necessary, we get
 \begin{equation} \label{eq.LThetalarge}
  {\mathcal{L}}\Theta(x) \geq \frac{1}{2}\qquad\text{for
  every $x\in \Omega_*$}.
 \end{equation}
 We then turn to use $\Theta\in H^1(\RN)$ as a barrier to prove~\eqref{Y:1}. To this end, we consider
 the function~$v:\RN\to\R$ defined as
 $$ v(x):= u(x)- 4\bar C\,\Theta(x),$$
 where $\bar C$ is as in~\eqref{eq.PbGeneralWeak}.
 We observe that, by~\eqref{eq.LThetalarge}, we have
 \begin{equation}\label{1LAPbarr}
  \begin{split}
   {\mathcal{L}}v =
{\mathcal{L}}u-
   4\,\bar C\,{\mathcal{L}}\Theta \le \bar C-\frac{4\bar C}{2}\le0
 \end{split}
 \end{equation}
 in $\Omega_*$. Now we claim that, for a.e.\,$x\in{\RN}\setminus\Omega_*$, we have
 \begin{equation}\label{1234}
  v(x)\le0.
 \end{equation}
 To check this, we observe that
 $$ {\RN}\setminus\Omega_*\subseteq
 \big({\RN}\setminus\Omega\big)\cup \{x\in\Omega:\,x_1\ge\ell\}.$$
 Hence we distinguish two cases. 
 \begin{itemize}
  \item $x\in \RN\setminus\Omega$. In this case, since $u\equiv 0$ a.e.\,in $\RN\setminus\Omega$, we have
  $$v(x)=- 4\,\bar C\,\Theta(x)
  = -4\,\bar C\,(\gamma(x_1)\cdot\varphi(x));$$ 
  from this, since
  $\gamma\geq 0$ on $\R$ (by Corollary~\ref{Y:3}) and $0\leq\varphi\leq 1$
  (see (ii) in Lemma~\ref{lem.cutOffpergamma}), we derive that
  $v\leq 0$ a.e.\,in $\RN\setminus\Omega$. \vspace*{0.07cm}
  
  \item $x\in \{x\in\Omega:\,x_1\geq\ell\}$. In this case,
  using~\eqref{ZwdZsdZEQUA:L-1} and the fact that
  $\varphi\equiv 1$ on $B(0,R)\supseteq\Omega$, we can write
  \begin{align*}
   v(x) & = u(x) - 4\,\bar C\,\gamma(x_1) 
   \le u(x)- 4\,\bar C\le \bar C-4\,\bar C\le0,
  \end{align*}
  and this concludes the proof of~\eqref{1234}.
 \end{itemize}
 {F}rom~\eqref{1LAPbarr},~\eqref{1234} and Theorem~\ref{thm.WMPweak}
 (notice that $v\in H^1(\RN)$, since the same is true of
 both $u$ and $\Theta$), we conclude that 
 $v(x)\le0$ for a.e.\,$x\in{\RN}$.
 Hence, we obtain
 (see~\eqref{EQUA:LR-1})
 $$ u(x)\le 4\,\bar C\,\Theta(x)
 = 4\,\bar C\,\gamma(x_1) \le\frac{4\,\bar C\,x_1}c
 \leq \frac{4}{c}\cdot\bar C\,|x|
 $$
 for a.e.\,$x\in\Omega\cap B(0,\ell)
 \subseteq\{x\in\RN:\,x_1\in(0,\ell)\}$, and this establishes~\eqref{Y:1}.
\end{proof}
{F}rom Theorem~\ref{BDTH}, we immediately obtain the following result.
   \begin{corollary}
   Let $\Omega$ be open and strictly convex, and let~$f\in L^\infty(\Omega)$.
   Let $u_f\in\spX$ be the \emph{(}unique\emph{)} weak solution of
     problem~\eqref{eq.mainPB}.
     
     Then, there exists~$\ell>0$ such that, for every~$p\in\partial\Omega$, we have that
	\begin{equation} \label{Y:10}
    |{u}_f(x)|\le C\,
    \big(\|{u}_f\|_{L^\infty({\RN})}+\|f\|_{L^\infty(\Omega)}\big)
    \, |x-p|,\qquad {\mbox{for a.e. }} x\in B(p,\ell).
    \end{equation} 
   \end{corollary}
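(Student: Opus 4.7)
The plan is to deduce this corollary as an immediate consequence of Theorem~\ref{BDTH}, applied twice: once to $u_f$ and once to $-u_f$. First, since $f\in L^\infty(\Omega)\subseteq L^2(\Omega)$ and $\Omega$ is bounded, Theorem~\ref{thm.existenceLax} provides the unique weak solution $u_f\in\spX\subseteq H^1(\RN)$, which in particular satisfies $u_f\equiv0$ a.e.~in $\RN\setminus\Omega$. Moreover, by Theorem~\ref{BOUNDED} (or by hypothesis, since the statement already invokes $\|u_f\|_{L^\infty(\RN)}$), we may regard $u_f\in L^\infty(\RN)$, so that the quantity
$$\bar C:=\|u_f\|_{L^\infty(\RN)}+\|f\|_{L^\infty(\Omega)}$$
is finite.

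Next, I would verify that $u_f$ fulfills the hypotheses of Theorem~\ref{BDTH} with this choice of $\bar C$. Indeed, $u_f\le\|u_f\|_{L^\infty(\RN)}\le\bar C$ a.e.~in $\RN$, and testing \eqref{eq.weaksoldef} against any nonnegative $\varphi\in C_0^\infty(\Omega,\R)$ together with $f\le\|f\|_{L^\infty(\Omega)}\le\bar C$ a.e.~in $\Omega$ shows that $u_f$ weakly satisfies $\LL u_f\le\bar C$ in $\Omega$ in the sense of Definition~\ref{def.weaksol}. Theorem~\ref{BDTH} then yields constants $C,\ell>0$, \emph{independent of} $p\in\partial\Omega$, such that
$$u_f(x)\le C\,\bar C\,|x-p|\qquad\text{for a.e.\,$x\in B(p,\ell)$.}$$

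To obtain the bound on $|u_f|$, I would apply the same reasoning to $v:=-u_f$. The function $v$ still lies in $\spX\subseteq H^1(\RN)$, vanishes in $\RN\setminus\Omega$, and satisfies $v\le\bar C$ a.e.~in $\RN$; moreover, by linearity of the weak formulation, $v$ weakly satisfies $\LL v=-f\le\|f\|_{L^\infty(\Omega)}\le\bar C$ in $\Omega$. A second invocation of Theorem~\ref{BDTH} with the \emph{same} $\bar C$ gives $-u_f(x)\le C\,\bar C\,|x-p|$ a.e.~in $B(p,\ell)$, and combining the two inequalities delivers \eqref{Y:10}.

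There is no substantive obstacle here: the whole point is that Theorem~\ref{BDTH} is a one-sided statement, and the extra input needed to pass to the absolute value is simply the observation that $-u_f$ satisfies an identical one-sided problem, which follows from the linearity of $\LL$ and of the weak formulation. The only mild care required is to track that the constants $C$ and $\ell$ delivered by Theorem~\ref{BDTH} do not depend on $p\in\partial\Omega$ (as indeed stated there) and that the choice of $\bar C$ can be taken the same for both $u_f$ and $-u_f$, which is transparent from its symmetric definition.
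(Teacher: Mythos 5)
Your proposal is correct and coincides with the paper's own argument: the paper likewise obtains \eqref{Y:10} by applying Theorem~\ref{BOUNDED} and Theorem~\ref{BDTH} to both $u_f$ and $-u_f$ with the choice $\bar C:=\|u_f\|_{L^\infty(\RN)}+\|f\|_{L^\infty(\Omega)}$. The verification that $-u_f$ satisfies the same one-sided hypotheses, and the remark that $C$ and $\ell$ are uniform in $p$, are exactly the points the paper relies on.
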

   \begin{proof}
 Formula~\eqref{Y:10}
 follows from Theorem~\ref{BOUNDED} and~\eqref{Y:1}, 
 applied to both ${u}_f$ and $-{u}_f$, choosing
 $$\bar C:=\|{u}_f\|_{L^\infty({\RN})}+\|f\|_{L^\infty(\Omega)}.$$
 This ends the proof.
 \end{proof}
 We point out that the term
	$$\|{u}_f\|_{L^\infty({\RN})}$$
	in~\eqref{Y:10} can be actually reabsorbed into $\|f\|_{L^\infty(\Omega)}$,
	as it follows from Theorem~\ref{BOUNDED}.
\medskip

As a byproduct of Theorem~\ref{BDTH}, we also establish Theorem~\ref{thm.RegulBoundaryII}:
 \begin{proof}[Proof of Theorem~\ref{thm.RegulBoundaryII}]
 	 First of all, since $f\in L^\infty(\Omega)$, we know from
 	 Theorem~\ref{thm.existenceLax} that there exists a (unique)
 	 weak solution $u_f\in \spX$ of problem~\eqref{eq.mainPB}. Moreover,
 	 by combining Theorems~\ref{BOUNDED} and~\ref{BDTH},
 	 we infer the existence of a suitable constant $\mathbf{c} > 0$,
 	 independent of $u_f$, such that
 	 \begin{itemize}
 	  \item[(a)] $\|{u}_f\|_{L^\infty(\RN)}\leq \mathbf{c}\,\|f\|_{L^\infty(\Omega)}$; 
 	  \item[(b)] there exists $\ell > 0$ such that, for every $p\in\de\Omega$,
	 $$|{u}_f(x)|\leq \mathbf{c}\,\|f\|_{L^\infty(\Omega)}\cdot
	 |x-p|\qquad\text{for a.e.\,$x\in\Omega\cap B(p,\ell)$}.$$    
 	 \end{itemize}
 	 Now, since $f\in C^k(\Omega,\R)$ (and $k\geq \frac{N}2+3$), we derive
 	 from Corollary~\ref{cor.regulSmoot} that there exists a unique function 
 	 $\widehat{u}\in C^k(\Omega,\R)$ such that
 	 \begin{equation} \label{eq.uhatufae}
 	  \widehat{u} \equiv u_f \qquad\text{a.e.\,on $\Omega$},
 	 \end{equation}
 	 where $k = k_{m,N}$ is as in~\eqref{eq.defkmN}. In particular,
 	 $k\geq 2$. Setting
 	 $$\mathfrak{u}_f:\RN\to\R, \qquad
 	 \mathfrak{u}_f(x) := 
 	 \begin{cases}
     \widehat{u}(x), & \text{for $x\in {\Omega}$}, \\
     0, & \text{for $x\notin {\Omega}$},
     \end{cases}
     $$
     we claim that $\mathfrak{u}_f$ is a classical solution
     of~\eqref{eq.mainPB}, further satisfying (i)-(ii)-(iii).
     Indeed, using~\eqref{eq.uhatufae} 
     and the fact that $u_f\equiv 0$ a.e.\,in $\RN\setminus\Omega$,
     we have
     $$\mathfrak{u}_f \equiv u_f \qquad\text{a.e.\,in $\RN$}.$$
     As a consequence, $\mathfrak{u}_f\in H^1(\RN)$
     (hence, (i) is fulfilled) and, since $u_f$
     satisfies (a)-(b), we im\-me\-dia\-te\-ly derive that
     $\mathfrak{u}_f$ satisfies (ii)-(iii) (with the same constants
     $\mathbf{c},\,\ell > 0$). In particular,
     from (iii) we deduce that
     $$\lim_{x\to p}\mathfrak{u}_f(x) = 0\qquad\text{for all $p\in\de\Omega$},$$
     thus, $\mathfrak{u}_f$ being bounded,
     we get $\mathfrak{u}_f\in \CCs(\RN)$.
     Finally, since $u_f$ is a weak solution of~\eqref{eq.mainPB}
     and $\mathfrak{u}_f\in \CCs(\RN)\cap C^2(\Omega,\R)$
     (actually, $\mathfrak{u}_f\in C^k(\Omega,\R)$), from (i) and
     Remark~\ref{rem.classical}
     we conclude that $\mathfrak{u}_f$ is a classical solution of~\eqref{eq.mainPB}. 
     The uniqueness of $\mathfrak{u}_f$
     then follows from Corollary~\ref{cor.uniqueClassical}, and the proof of
     Theorem~\ref{thm.RegulBoundaryII} is thereby complete.
 	\end{proof}
 	
 	 \appendix
    \section{Failure of the maximum principle} \label{sec.appendix}
       \subsection{The case of $\LL' := \Delta+(-\Delta)^s$}
    The following examples show that
    the weak maximum principle contained in~\eqref{thm.WMPLL} \emph{does not hold}
    for the operator
    \begin{equation}\label{d98bt549675 9}
    \LL' := \Delta+(-\Delta)^s.\end{equation}
     \begin{example} \label{exm.CES}
  Let $s\in(0,1/2)$ be arbitrarily fixed, and let 
  $$f:\R\longto\R, \qquad f(x) := \begin{cases}
  x^2-1, & \text{if $|x|\leq 1$}, \\
  0, & \text{if $|x| > 1$}.
  \end{cases}$$
  Clearly, $f\in C_b(\R)$ (as
  $|f|\leq 1$). Moreover, setting $\Omega_0 := (-1,1)$, we also
  have that~$f\in C^2(\Omega_0)$. 
  
  We claim that,
  for every fixed $x\in\R$, one has
  \begin{equation} \label{eq.toproveintegralf}
  y\mapsto\frac{f(x)-f(y)}{|x-y|^{1+2s}}\in L^1(\R).
  \end{equation}
  In order to prove~\eqref{eq.toproveintegralf}, we distinguish three cases:
  \begin{itemize}
   \item[(i)] $x\in(-1,1)$. In this case, reminding that
   $s\in(0,1/2)$, we have
  \begin{align*}
   & \int_{\R}\frac{|f(x)-f(y)|}{|x-y|^{1+2s}}\,\d y \\[0.1cm]
   & \qquad
   = |f(x)|\,\int_{-\infty}^{-1}
     \frac{\d y }{|x-y|^{1+2s}}
   + |f(x)|\,\int_1^\infty   \frac{\d y}{|x-y|^{1+2s}} +
   \int_{-1}^1\frac{|x^2-y^2|}{|x-y|^{1+2s}}\,\d y  \\[0.1cm]
   &  
    \qquad \leq\int_{-\infty}^{-1}
     \frac{\d y }{|x-y|^{1+2s}}
   + \int_1^\infty   \frac{\d y}{|x-y|^{1+2s}}
   + 2\int_{-1}^1\frac{\d y}{|x-y|^{2s}} 
   < \infty.
  \end{align*}
  \item[(ii)] $x = \pm 1$. We perform the computations when~$x=1$,
   being the case $x = -1$ completely analogous.
  In this case, we have
  \begin{align*}
   \int_{\R}\frac{|f(x)-f(y)|}{|1-y|^{1+2s}}\,\d y
   & = \int_{-1}^1\frac{1-y^2}{|1-y|^{1+2s}}\,\d y 
   \leq 2\int_{-1}^1\frac{1}{|1-y|^{2s}} < \infty .
  \end{align*}
  \item[(iii)] $x\notin[-1,1]$. In this case, since $|x-y|\geq |x|-1$ if
  $y\in(-1,1)$, we have
  \begin{align*}
   \int_{\R}\frac{|f(x)-f(y)|}{|x-y|^{1+2s}}\,\d y
   & = \int_{-1}^1\frac{1-y^2}{|x-y|^{1+2s}}\,\d y \leq
   \frac{1}{(|x|-1)^{1+2s}}\int_{-1}^1(1-y^2)\,\d y<\infty.
  \end{align*}
  \end{itemize}
  Summing up, the claimed~\eqref{eq.toproveintegralf} is completely established. \vspace*{0.02cm}
  
  Now, we observe that,
for any $x\in(-1,1)$,
  \begin{align*}
  & \frac{|(-\Delta)^sf(x)|}{c_{1,s}}
   \leq \int_{\R}\frac{|f(x)-f(y)|}{|x-y|^{1+2s}}\,\d y \\[0.1cm]
  & \qquad
  \leq (1-x^2)\int_{-\infty}^{-1}  \frac{\d y}{|x-y|^{1+2s}} +
   (1-x^2)
  \int_1^\infty
   \frac{\d y}{|x-y|^{1+2s}} + \int_{-1}^1\frac{|x^2-y^2|}{|x-y|^{1+2s}}\,\d y \\[0.1cm]
   & \qquad
  \leq (1-x^2)\int_{-\infty}^{-1}  \frac{\d y}{|x-y|^{1+2s}} +
   (1-x^2)
  \int_1^\infty
   \frac{\d y}{|x-y|^{1+2s}} + 2\int_{-1}^1\frac{\d y}{|x-y|^{2s}} \\[0.1cm]
  & \qquad\leq 2^{2-2s}\cdot\frac{1-s}{s(1-2s)}
 .
  \end{align*}
  As a consequence, if $\e\in(0,1)$ and if
  $f_\e(x) := f(x/\e)$, we have
  $$\LL'f_\e(x) = \frac{2}{\e^2} + \frac{1}{\e^{2s}}\big((-\Delta)^s f\big)(x/\e)
  \geq \frac{2}{\e^2}\bigg(1-\e^{2-2s}\cdot\frac{2^{1-2s}\, c_{1,s}\,(1-s)}{s(1-2s)}\bigg),$$
  for every $x\in\R$ with $|x|<\e$. If we choose $\e_0$ so small that
  $$1-\e_0^{2-2s}\cdot\frac{2^{1-2s}\,c_{1,s}\,(1-s)}{s(1-2s)} > 0,$$
  we thus see that $f_{\e_0}$ enjoys the following properties:
  \begin{itemize}
   \item[(a)] $f_{\e_0}\in C^2(\Omega_{\e_0})\cap C_b(\R)$, where 
   $\Omega_{\e_0} := (-\e_0,\e_0)$;
   \item[(b)] the $s$-Laplacian of $f_{\e_0}$ is pointwise defined on the whole of $\R$;
   \item[(c)] $f_{\e_0} \equiv 0$ on $\R\setminus\Omega_{\e_0}$ and
   $\LL'f_{\e_0} > 0$ on $\Omega_{\e_0}$.
  \end{itemize}
  Since, obviously, $f_{\e_0} < 0$ on $\Omega_{\e_0}$, we conclude that
  a weak maximum principle as in~\eqref{thm.WMPLL} does not hold
  for $\LL' = \Delta+(-\Delta)^s$.
 \end{example}
 
 \begin{example} \label{exm.generals}
  By dropping
  the assumption~$u \equiv 0$ in~$\RN\setminus\Omega$, it is possible
  to show that $\LL'$ in~\eqref{d98bt549675 9}
  violates the weak maximum principle
  in~\eqref{thm.WMPLL} \emph{for every $s\in(0,1)$}.
  Indeed, let $s\in(0,1)$ and let
  $$f:\RN\longto\R, \qquad f(x) := |x|^2-1.$$
  Moreover, let $\varphi\in C_0^\infty(\RN,[0,1])$ be a cut-off function
  such that 
  \begin{equation}\label{049vghjglfwveii}
  \text{$\varphi \equiv 1$ on $\Omega := B(0,1)$ and
  $\varphi \equiv 0$ on $\RN\setminus B(0,2)$}.\end{equation}
  We then set $u := f\varphi$. Obviously,
  $u\in C_0^\infty(\RN)\subseteq C^2(\Omega)\cap C_b(\RN)$. Moreover, 
  by taking into account the properties of~$\varphi$ in~\eqref{049vghjglfwveii}, we see that
  $$\text{$\Delta u = \Delta f = {2N}$ in $\Omega$ \qquad and \qquad
  $(-\Delta)^s u\in L^\infty(\RN)$.}$$
  As a consequence, if $\e\in(0,1)$ and $u_\e := u(x/\e)$, we have
  $$\LL'u_\e(x) = \frac{{2N}}
  {\e^2} + \frac{1}{\e^{2s}}\big((-\Delta)^s u\big)(x/
  {\e}) 
  \geq \frac{1}{\e^2}\Big({2N}-\e^{2-2s}\,\|(-\Delta)^s u\|_{L^\infty(\RN)}\Big),$$
  for all $x\in\Omega_\e := B(0,\e)$. We now argue as in Example~\ref{exm.CES}:
  if $\e_0$ is so small that
  $$ {2N}-\e_0^{2-2s}\,\|(-\Delta)^s u\|_{L^\infty(\RN)} > 0,$$
  then the function $u_{\e_0}$ enjoys the following properties:
  \begin{itemize}
   \item[(a)] $u_{\e_0}\in C_0^\infty(\RN)\subseteq C^2(\Omega_{\e_0})\cap C_b(\R)$;
   \item[(b)] the $s$-Laplacian of $u_{\e_0}$ is pointwise defined on the whole of $\R$
   (and it is globally bounded);
   \item[(c)] $u_{\e_0} \geq 0$ on $\R\setminus\Omega_{\e_0}$ and
   $\LL'u_{\e_0} > 0$ on $\Omega_{\e_0}$.
  \end{itemize}
  Since, obviously, $u_{\e_0} < 0$ on $\Omega_{\e_0}$, we conclude that
  a weak maximum principle as in~\eqref{thm.WMPLL} does not hold
  for $\LL' = \Delta+(-\Delta)^s$, for any~$s\in (0,1)$.
 \end{example}
 
 \subsection{The r\^ole of the `non-local boundary conditions'}
    Throughout the sequel, given any $R > 0$, we adopt the simplified notation
    $$B_R := B(0,R).$$
    Then, we claim that for every fixed $r > 1$ there exists
    $v\in C^2(B_1,\R)\cap \CCs(\RN)$ such that
    \begin{equation}\label{Dett-2}
	\begin{cases}
	{\mathcal{L}}v=0 & \text{in $B_1$},\\
	\displaystyle\inf_{B_r\setminus B_1}v > 0 ,\\
	\displaystyle\min_{B_1} v < 0.
	\end{cases}
	\end{equation}
	{F}rom this, since the continuity of $v$ implies that
	 $v\geq 0$ on $\de B_1$,
	we deduce that the weak maximum principle in~\eqref{thm.WMPLL}
	\emph{does not hold} if one requires $u\geq 0$ only on $\de\Omega$.
	\medskip
	
	\noindent To prove the existence of such a function $v$, 
	we let
	$\phi\in C^\infty_0(\RN,\R)$ be such that \medskip
	
	(a)\,\,$\phi\equiv -1$
	in $B_{r+3}\setminus B_{r+2}$ and $\phi\equiv 0$ outside $B_{r+4}\setminus B_{r+1}$;
	\medskip
	
	(b)\,\,$-1\leq\phi\leq 0$ on the whole of $\RN$. \medskip
	
	\noindent Since $\phi\in C_0^\infty(\RN,\R)\subseteq\mathcal{S}(\RN)$, we have that~$f := -\LL\phi$
	can be computed pointwise in $\RN$ and~$f\in C^\infty(\RN)\cap L^\infty(\RN)$.
	
	As a consequence, from Theorem~\ref{thm.RegulBoundaryII} we know that there exists
	a unique classical solution $\mathfrak{u}_\phi\in C^2(B_1,\R)\cap \CCs(\RN)$
	of the problem
	\begin{equation} \label{eq.PBsolveduphi} 
	\begin{cases}
	 \LL u = f = -\LL\phi & \text{in $B_1$}, \\
	 u\equiv 0 & \text{on $\RN\setminus B_1$}.
	\end{cases}
	\end{equation}
	We then set $w := \mathfrak{u}_\phi+\phi$
	and we notice that, thanks to the regularity of $\mathfrak{u}_\phi$ and $\phi$,
	one has that~$w\in C^2(B_1,\R)\cap \CCs(\RN)$. Furthermore, from~\eqref{eq.PBsolveduphi}
	we obtain that
	\begin{equation}\label{Dett-3}	
	\begin{cases}
	\LL w=0 & \text{in $B_1$}, \\
	w\equiv \phi & \text{in $\RN\setminus B_1.$}
	\end{cases}	
	\end{equation}
	We now claim that
	\begin{equation}\label{mino9ks}
	m := \inf_{B_1} w < 0.
	\end{equation}
	For this, we argue by contradiction and we suppose that
	$$w(x)\ge0\qquad\text{for every $x\in B_1$}.$$
    In particular, since $w\in C(\RN,\R)$ and $w\equiv\phi\equiv 0$ on $\de B_1$, 
    we can find an \emph{interior} maximum point~$x_0\in B_1$ for $w$ such that 
    \begin{equation}\label{f9854yhgikfrh}
    w(x_0)\geq 0.\end{equation}
    Thus,
    $\Delta w(x_0)\leq 0$ and therefore, using the exterior condition in~\eqref{Dett-3},
	\begin{equation}\label{jd49837btheryehy07}\begin{split}
	& 0 =\frac{ {\mathcal{L}}w(x_0)}{c_{N,s}}= \frac{\big(-\Delta+(-\Delta)^s\big)w(x_0)}{c_{N,s}} \geq \frac{(-\Delta)^sw(x_0)}{c_{N,s}}=
	\int_{\R^N}
	\frac{w(x_0)-w(y)}{|x_0-y|^{N+2s}}\,\d y \\[0.1cm]
	& \qquad=\int_{B_1}\frac{w(x_0)-w(y)}{|x_0-y|^{N+2s}}\,\d y
	+
	\int_{\R^N\setminus B_1}
	\frac{w(x_0)-\phi(y)}{|x_0-y|^{N+2s}}\,\d y\\[0.1cm]
	& \qquad \ge \int_{\R^N\setminus B_1}
	\frac{w(x_0)-\phi(y)}{|x_0-y|^{N+2s}}\,\d y,
		\end{split}\end{equation}
	where we used that fact that~$w(x_0)\geq w(y)$ for all $y\in B_1$ in the last line.
	 Moreover, by assumption~(a), we know that~$\phi\equiv -1$
	on~$B_{r+3}\setminus B_{r+2}$}. Accordingly, using also~\eqref{f9854yhgikfrh} and assumption~(b)
	on~$\phi$, we conclude from~\eqref{jd49837btheryehy07} that
	
	\begin{align*}&
	0\geq \int_{B_{r+3}\setminus B_{r+2}}
	\frac{w(x_0)+1}{|x_0-y|^{N+2s}}\,\d y+
	\int_{(B_{r+2}\setminus B_1)\cup (\RN\setminus B_{r+3})}
	\frac{w(x_0)-\phi(y)}{|x_0-y|^{N+2s}}\,\d y\\
	&\qquad \ge\int_{B_{r+3}\setminus B_{r+2}}
	\frac{w(x_0)+1}{|x_0-y|^{N+2s}}\,\d y > 0.
	\end{align*}
	This contradiction proves~\eqref{mino9ks}. 
	With~\eqref{mino9ks} at hand, we define
	$$ u(x):=w(x)-m,$$
	and we observe that, in view of the properties of $w$, one has \medskip
	
	(1)\,\,$u\in C^2(B_1,\R)\cap\CCs(\RN)$ (as the same is true of $w$); 
	
	(2)\,\,$\LL u = \LL w = 0$ pointwise on $B_1$; 
	
	(3)\,\,$u = -m > 0$ on $B_r\setminus B_1$
	(as $w\equiv\phi\equiv 0$ on $B_r\setminus B_1$); 
	
	(4)\,\,$\inf_{B_1} u = 0$. \medskip
	
    \noindent Thus, by making use of (1)--(4), 
    we easily conclude that the function
	$$v(x):=2u(x)+m,$$
	belongs to $C^2(B_1,\R)\cap\CCs(\RN)$ and satisfies~\eqref{Dett-2}.

\vfill
\end{document}